\documentclass[12pt]{article}%
\usepackage{amsfonts, amsmath, amssymb}
\usepackage{subfig}
\usepackage[dvips]{graphicx}
\usepackage{pst-plot, pstricks, pst-node}
\usepackage{cite}
\usepackage{amsmath}
\usepackage{amsfonts}
\usepackage{amssymb}%
\setcounter{MaxMatrixCols}{30}
\providecommand{\U}[1]{\protect\rule{.1in}{.1in}}
\newtheorem{theorem}{Theorem}

\newtheorem{definition}[theorem]{Definition}

\newtheorem{lemma}[theorem]{Lemma}

\newtheorem{proposition}[theorem]{Proposition}

\newenvironment{proof}[1][Proof]{\noindent\textbf{#1.} }{\ \rule{0.5em}{0.5em}}
\begin{document}

\title{Standing waves in near-parallel vortex filaments}
\author{Walter Craig\thanks{Department of Mathematics and Statistics, McMaster
University, Hamilton, Ontario L8S 4K1, Canada}, Carlos
Garc\'{\i}a-Azpeitia\thanks{Departamento de Matem\'{a}ticas, Facultad de
Ciencias, Universidad Nacional Aut\'{o}noma de M\'{e}xico, 04510 M\'{e}xico
DF, M\'{e}xico}, Chi-Ru Yang\footnotemark[1] }
\maketitle

\begin{abstract}
A model derived in \cite{KMD95} for $n$ near-parallel vortex filaments in a
three dimensional fluid region takes into consideration the pairwise
interaction between the filaments along with an approximation for motion by
self-induction. The same system of equations appears in descriptions of the
fine structure of vortex filaments in the Gross -- Pitaevski model of Bose --
Einstein condensates. In this paper we construct families of standing waves
for this model, in the form of $n$ co-rotating near-parallel vortex filaments
that are situated in a central configuration. This result applies to any pair
of vortex filaments with the same circulation, corresponding to the case
$n=2$. The model equations can be formulated as a system of Hamiltonian PDEs,
and the construction of standing waves is a small divisor problem. The methods
are a combination of the analysis of infinite dimensional Hamiltonian
dynamical systems and linear theory related to Anderson localization. The main
technique of the construction is the Nash-Moser method applied to a
Lyapunov-Schmidt reduction, giving rise to a bifurcation equation over a
Cantor set of parameters.
\end{abstract}

\section*{Introduction}

The dynamics of self-interacting infinitesimal vortex tubes in a three
dimensional fluid is in general a complex problem, involving the coherence of
structures under time evolution of individual filaments, and possible filament
collision and reconnection. However in certain limiting cases there are models
that are relatively straightforward to understand, which exhibit dynamics that
are subject to rigorous analysis. In \cite{KMD95}, a model system of equations
was derived for the interaction of $n$ near-parallel vortex filaments, all
sharing the same circulation. In this model we consider points in
$\mathbb{R}^{3}$ coordinatized by $(x_{1}+ix_{2},x_{3})\in\mathbb{C}%
\times\mathbb{R}$ and give $n$-many curves $(u_{j}(t,s),s)\in\mathbb{C}%
\times\mathbb{R}$ that describe the positions of $n$ vertically oriented
vortex filaments, each with unit circulation $\gamma=1$. From \cite{KMD95} the
system of model equations for the dynamics of $n$ near-parallel vortex
filaments is given by
\begin{equation}
\partial_{t}u_{j}=i\left(  \partial_{ss}u_{j}+\sum_{i=1,i\not =j}^{n}%
\frac{u_{j}-u_{i}}{\left\vert u_{j}-u_{i}\right\vert ^{2}}\right)
~\text{,}\quad j=1,\dots n~. \label{fp}%
\end{equation}
The case of exactly parallel vortex filaments in an incompressible inviscid
fluid reduces to a problem of interactions of point vortices in $\mathbb{R}%
^{2}$, which is described by a finite dimensional Hamiltonian system. The
model \eqref{fp} represents an approximation of near - parallel vortex
filament interactions, which is valid in an asymptotic regime in which vortex
filaments have small deviation from being exactly parallel and they remain
uniformly separated.

The above model has been extensively studied. In \cite{KPV03} the long time
existence of solutions of \eqref{fp} is given for $n=2$ and for certain near
triangular configurations with $n=3$. The article \cite{BaMi12} gives a long
time existence theorem for the case $n=4$ near the configuration of a square,
and gives a global in time solution for central configurations consisting of
rotating regular polygons of an arbitrary number $n\geq3$ of filaments. In
this latter work the authors give a uniform lower bound on the distance
between the filaments. In a very recent paper \cite{BaMi16} the same authors
give examples where the model \eqref{fp} evolves filaments which intersect in
finite time, thus giving rise to singular behavior; this illustrates the
limits of the validity of the model, at least as an approximation of the Euler
equations uniformly in time.

We recently learned of a second setting in which the near-parallel vortex
model appear, namely in studies of the dynamics of vortices for the Gross --
Pitaevski model of Bose -- Einstein condensates. Under conditions of vortex
confinement, the core of a higher index vortex filament separates into a fine
structure described by the system of near-parallel index-1 filaments, whose
positions are described by the system \eqref{fp}. This asymptotic description
has been rigorously established in \cite{ContrerasJerrard16} in the stationary
case, and in the case of nontrivial time dependent evolution in
\cite{JerrardSmets16}. To our knowledge, the rigorous analytic justification
of \eqref{fp} as a model of vortex filaments for the Euler equations of fluid
dynamics is open.

In this paper we consider families of time periodic solutions that
additionally are periodic in the spatial variable $s$. Our main result is a
bifurcation theory for periodic standing waves of this $n$ vortex ensemble,
giving rise to families of solutions that bifurcate from $n$ exactly parallel
filaments that revolve around an axis, with positions determined by a central
configuration. For $n=2$ this addresses the case of any two near-parallel
filaments with the same circulation. When $n\geq3$ we consider arbitrary
central configurations of vortex filaments. In all cases we consider, the
amplitudes of the standing waves that are constructed are restricted to a
Cantor set, due to an accumulation of resonances along the bifurcating
solution branches.

\begin{figure}[ptb]
\centering
\par
\begin{pspicture}(-5,-2)(5,2)\SpecialCoor
\psline{->}(0,-1.5)(0,1.5)
\psellipticarc[linestyle=dashed,arrowscale=1.5]{->}(0,1.5)(.5,.2){-90}{270}
\rput[b](0,1.8){$\omega$}
\psline(-4,-1.5)(-4,1.5)
\psellipticarc[linestyle=dashed,arrowscale=1.5]{->}(-4,1.5)(.5,.2){-180}{90}
\psellipticarc[linestyle=dashed,arrowscale=1.5]{->}(-4,0)(.5,.2){0}{-90}
\psellipticarc[linestyle=dashed,arrowscale=1.5]{->}(-4,-1.5)(.5,.2){-180}{90}
\psecurve(-3.5,3)(-4.5,1.5)(-3.5,0)(-4.5,-1.5)(-3.5,-3)
\psline(4,-1.5)(4,1.5)
\psellipticarc[linestyle=dashed,arrowscale=1.5]{->}(4,1.5)(.5,.2){0}{-90}
\psellipticarc[linestyle=dashed,arrowscale=1.5]{->}(4,0)(.5,.2){180}{90}
\psellipticarc[linestyle=dashed,arrowscale=1.5]{->}(4,-1.5)(.5,.2){0}{-90}
\psecurve(3.5,3)(4.5,1.5)(3.5,0)(4.5,-1.5)(3.5,-3)
\NormalCoor\end{pspicture}
\caption{Two vortex filaments that have large separation, that rotate
uniformly around the center with small frequency $\omega\sim0$. The
perturbation from straight filaments oscillates approximately as $u(s,t) \sim
r(e^{-it})\cos s$.}%
\end{figure}
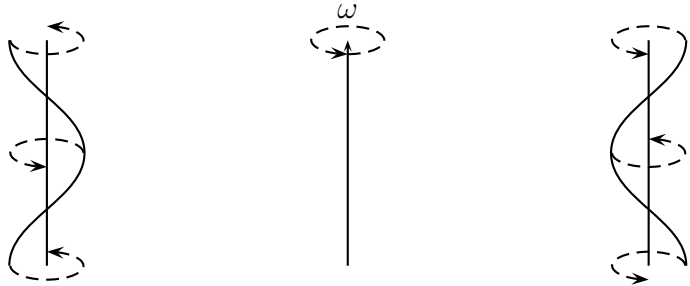

Specifically, as a consequence of the conclusions of our
Theorem~\ref{Theorem2} below, the following result holds for the $n$ vortex
filament problem. \newpage

\begin{theorem}
Let $a_{j}$ be a central configuration, satisfying
\[
\omega a_{j}=\sum_{i=1(i\neq j)}^{n}\frac{a_{j}-a_{i}}{\left\vert a_{j}
-a_{i}\right\vert ^{2}}\text{.}%
\]
Then the $n$ vortex filament problem (\ref{fp}) has solutions of the form%
\[
u_{j}(t,s)=a_{j}e^{it\omega}\left(  1+u(\Omega(r)t,s;r)\right)  \text{,}%
\]
where $\omega$ is a diophantine frequency, $r$ is a small amplitude varying
over a Cantor set, and $\Omega(r)=\Omega_{0}+\mathcal{O}(r^{2})$ with
$\Omega_{0}=\sqrt{1+2\omega}$ and
\[
u(t,s)=r\cos s\left(  \cos t-i\Omega_{0}\sin t\right)  +\mathcal{O}(r^{2})~.
\]

\end{theorem}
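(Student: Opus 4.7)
The plan is to implement the Nash-Moser / Lyapunov-Schmidt strategy announced in the abstract, carried out in a frame co-rotating with the central configuration. First, I would substitute the ansatz $u_j(t,s) = a_j e^{i\omega t}(1+v_j(\tau,s))$ with rescaled time $\tau = \Omega t$ into \eqref{fp}. The central configuration identity cancels the constant term in the resulting equation for $v_j$, producing a Hamiltonian PDE system of the form
\[
\Omega\,\partial_\tau v_j = -i\omega\, v_j + i\,\partial_{ss} v_j + \frac{i}{a_j}\sum_{i\neq j} F_{ij}(v),
\]
with $F_{ij}(0) = 0$ and $F_{ij}$ smooth in $(v,\bar v)$. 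Expanding $F_{ij}$ to linear order and restricting to the in-phase subspace $v_1 = \cdots = v_n = v$, the sum telescopes via the central configuration identity to give the scalar equation
\[
\Omega\,\partial_\tau v = -i\omega\, v + i\,\partial_{ss} v - i\omega\, \bar v.
\]
On the Fourier mode $v(\tau,s) = V(\tau)\cos s$ with $V = X + iY$ this reduces to $\Omega\,\partial_\tau X = Y$ and $\Omega\,\partial_\tau Y = -(1+2\omega)X$, a harmonic oscillator of frequency $\Omega_0 = \sqrt{1+2\omega}$ with eigenfunction $\cos s\,(\cos\tau - i\Omega_0\sin\tau)$; this exactly identifies the leading term of the ansatz in the statement.

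Next, I would carry out a Lyapunov-Schmidt reduction about the resonance $\Omega=\Omega_0$. The relevant two-dimensional real kernel is spanned by the in-phase mode above and its $\tau$-translate, but the autonomy of the system in $\tau$ reduces the effective parameter to a single amplitude $r$. The range equation, solved modulo the kernel, is handled by Nash-Moser iteration and produces a tame smooth family $v = v(r,\Omega)$ for $r$ in a Cantor set and $\Omega$ close to $\Omega_0$. Substituting back yields a scalar bifurcation equation for $\Omega=\Omega(r)$ whose $r^2$ correction is computed from a Birkhoff-normal-form coefficient arising from the cubic and quartic terms of the Hamiltonian; a standard non-degeneracy (twist) check then gives the expansion $\Omega(r) = \Omega_0 + \mathcal{O}(r^2)$ and, crucially, the monotone dependence of $\Omega$ on $r^2$ needed for the measure estimates below.

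The main obstacle is proving tame invertibility of the linearized operator along the iteration. This linearization couples the $n$ filament components through a Hessian of the interaction potential at the central configuration, whose transverse spectrum governs the small-divisor behavior; for $n\geq 3$ this requires an Anderson-localization type analysis as indicated in the abstract. One must establish Diophantine lower bounds $|\mu_{j,k,\ell}(r)| \geq \gamma\,\langle k,\ell\rangle^{-\nu}$ on the resulting eigenvalues, indexed by the filament label $j$ and the Fourier pair $(k,\ell)\in\mathbb{Z}^2$, uniformly along the iteration; the Diophantine condition on $\omega$ controls the $r$-independent part of the spectrum. I would construct the Cantor set of admissible amplitudes by excising countably many resonant intervals and then apply a Borel-Cantelli measure estimate, using the twist condition from the previous paragraph to conclude that the excised set has asymptotically zero relative measure near $r=0$. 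Once these estimates are in hand, the Nash-Moser scheme converges in a suitable scale of Sobolev or analytic norms and delivers the standing-wave family asserted in the statement.
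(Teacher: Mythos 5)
Your high-level strategy (co-rotating frame, reduction around a leading Fourier mode, Lyapunov-Schmidt plus Nash-Moser, excision of resonant amplitudes) is the one the paper uses, and your leading-order computation of $\Omega_{0}=\sqrt{1+2\omega}$ with eigenmode $\cos s\,(\cos\tau-i\Omega_{0}\sin\tau)$ is correct. But there are two structural gaps that would derail a real proof carried out as you describe it.

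First, you do not actually reduce to the invariant homographic manifold before setting up the Nash-Moser iteration, and this creates a contradiction in your own outline. Early on you restrict to $v_{1}=\cdots=v_{n}=v$ and derive a scalar linearization, but later you say the linearized operator ``couples the $n$ filament components through a Hessian of the interaction potential'' and plan an Anderson-localization analysis of the full $n$-component linearization with eigenvalues $\mu_{j,k,\ell}(r)$ indexed by the filament label $j$. These are incompatible. The paper's reduction is not a restriction done at the linearized level only: the homographic ansatz $u_{j}=w(t,s)a_{j}$ gives an \emph{exact} invariant manifold for the nonlinear system (because $a_{j}$ is a central configuration, the $j$-dependence cancels identically), so the whole problem, including every term of the Nash-Moser iteration and every small-divisor estimate, becomes a single scalar PDE in $w$. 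If instead you kept the full $n$-component system and only linearized in the in-phase direction, you would still face coupled small divisors in the transverse modes of the central-configuration Hessian, none of which is analyzed or even needed.

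Second, and more importantly, you treat the kernel of $L(\Omega_{0})$ as two-dimensional and handle the degeneracy only by appealing to $\tau$-autonomy. This misses the central difficulty the paper has to confront. In the full $L^{2}(\mathbb{T}^{2};\mathbb{C})$ the kernel at $\Omega_{0}$ is not two- but four-real-dimensional, spanned (in the appendix's notation) by $z_{1}ae^{i(s+t)}+\bar z_{1}be^{-i(s+t)}+z_{2}ae^{i(-s+t)}+\bar z_{2}be^{-i(-s+t)}$, because the problem is invariant not only under time translation but also under $s$-translation and phase rotation, a $\mathbb{T}^{3}$ action. The degeneracy in $s$ is precisely what produces the competing branches of traveling waves ($z_{2}=0$) and standing waves ($z_{1}=z_{2}$), and nothing in your outline distinguishes between them or removes the $s$-translation direction from the linearization. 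The paper's solution is to work from the start in the fixed-point space of the reflection symmetries $u(t,s)=u(t,-s)=\bar u(-t,s)$; in $L^{2}_{sym}$ the kernel becomes genuinely one-dimensional, the linearization along the branch is nondegenerate, and the Nash-Moser scheme can close. Without this symmetry reduction (or an explicit alternative such as moving-frame coordinates that quotient the full $\mathbb{T}^{3}$ action), the linearized range-equation operator has an uncontrolled null direction and the Lyapunov-Schmidt splitting you propose does not produce a well-posed bifurcation equation in a single parameter $r$.

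Apart from these, your description of the amplitude-frequency twist $\Omega(r)=\Omega_{0}+\Omega_{2}r^{2}+\mathcal{O}(r^{3})$ and the measure estimate by Borel-Cantelli is in line with the paper, which computes $\Omega_{2}$ explicitly and verifies $\Omega_{2}\neq 0$ away from a single exceptional $\omega_{0}$; you should note that this non-degeneracy has to be checked, not assumed.
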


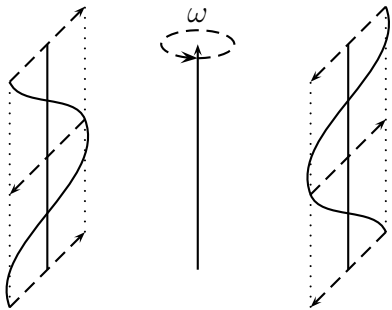
\begin{figure}[ptb]
\centering
\par
\begin{pspicture}(-3,-2)(3,2)
\SpecialCoor
\psline{->}(0,-1.5)(0,1.5)
\psellipticarc[linestyle=dashed,arrowscale=1.5]{->}(0,1.5)(.5,.2){-90}{270}
\rput[b](0,1.8){$\omega$}
\psline(-2,-1.5)(-2,1.5)
\psline[linestyle=dotted](-2.5,1)(-2.5,-2)
\psline[linestyle=dotted](-1.5,2)(-1.5,-1)
\psline[linestyle=dashed]{->}(-2.5,1)(-1.5,2)
\psline[linestyle=dashed]{->}(-1.5,.5)(-2.5,-.5)
\psline[linestyle=dashed]{->}(-2.5,-2)(-1.5,-1)
\psecurve(-1.5,3.5)(-2.5,1)(-1.5,.5)(-2.5,-2)(-1.5,-2.5)
\psline(2,-1.5)(2,1.5)
\psline[linestyle=dotted](1.5,1)(1.5,-2)
\psline[linestyle=dotted](2.5,2)(2.5,-1)
\psline[linestyle=dashed]{<-}(1.5,1)(2.5,2)
\psline[linestyle=dashed]{<-}(2.5,.5)(1.5,-.5)
\psline[linestyle=dashed]{<-}(1.5,-2)(2.5,-1)
\psecurve(1.5,2.5)(2.5,2)(1.5,-.5)(2.5,-1)(1.5,-3.5)
\NormalCoor
\end{pspicture}
\caption{Two vortex filaments with small separation, rotating with frequency
$\omega\sim\infty$. The perturbation from straight filaments oscillates
approximately as $u(s,t) \sim r(i\sqrt{2\omega}\sin\sqrt{2\omega}t)\cos s$.}%
\end{figure}

Many PDE's that describe physical phenomena exhibit the structure of an
infinite dimensional Hamiltonian system. Equations that model nonlinear wave
phenomena are such, and they typically possess equilibria that are elliptic
points in the sense of dynamical systems, for which one anticipates families
of nearby periodic and quasi-periodic solutions. However for Hamiltonian
PDE's, even the construction of periodic solutions often presents a small
divisor problem, due to the infinite number of degrees of freedom and the
spectral properties of the relevant linearized operators. Thus such
constructions are analogous to the analytic challenge of constructing
invariant tori in KAM theory.

In this paper the methods that are used to construct periodic solutions
involve a Lyapunov-Schmidt reduction along with a Nash-Moser procedure, a
strategy introduced in Craig and Wayne~\cite{CrWa93}. This technique is useful
in problems with multiple resonances, as it has the advantage that it does not
require the third Melnikov condition that is a feature in the classical KAM
methods. This procedure was generalized by Bourgain to construct quasiperiodic
solutions for Hamiltonian PDEs, in for instance~\cite{Bo95}, and was further
used an improved by Berti and Bolle~\cite{BeBo13} and a number of other authors.

The strategy of Lyapunov-Schmidt reduction is used to solve the range equation
through a Nash-Moser procedure, with smoothing operators given by Fourier
truncations of finite but increasing dimension. The convergence of the
procedure relies on estimates of the inverse of a sequence of finite
dimensional matrices of asymptotically large dimension. Approximate inversion
of linearized operators is the key to most applications of KAM theory to
Hamiltonian PDEs. Resolvent estimates that resemble Fr\"{o}hlich--Spencer
estimates are used to control the inverse of the linearization, projected off
of the kernel. However restrictions in the local coordinates of the kernel and
the frequency parameters have to be imposed in order to obtain these
estimates. These in turn require the excision of certain near resonant subsets
of parameter space in an inductive procedure. The estimates for the projected
inverse operator diverge, due to the small divisors, a divergence which is
overcome by the rapid convergence of the Newton method. However, at the end of
the process the set of parameters for which the scheme converges is reduced to
a Cantor set of positive measure, and the bifurcation equation is solved only
over this Cantor set.

In this paper, the bifurcation equation is analyzed with the use of the
symmetries. These symmetries imply the existence of two bifurcation branches,
one consisting of standing waves and one of traveling waves. The branch of
standing waves corresponds to a symmetry-breaking phenomenon from a
one-dimensional orbit to a three-dimensional orbit. Estimates on the measure
of the intersection of the branch of nontrivial standing waves with the Cantor
set finishes the proof. The branch of traveling wave solutions does not
involve small divisors; it is discussed in the Appendix along with other solutions.

The Nash-Moser procedure has been previously applied to nonlinear wave
equations and NLS equations in \cite{CrWa93}, \cite{CrWa94}, \cite{Bo95},
\cite{BeBo13} and \cite{Be14}, and for standing water waves in \cite{PlTo01}.
We have two purposes in developing the analysis of Hamiltonian PDEs and small
divisor problems in this paper. Firstly, the underlying problem of vortex
filament dynamics has a relevance to fluid dynamics, and in particular to
questions of Euler flows. Moreover, we have a secondary objective, which is to
present a simplified and more straightforward proof of the relevant estimates
of the inverse of the linearized operator, being a variant and a conceptually
simpler version of the classical Fr\"{o}hlich-Spencer estimates. We believe
that these will be useful for other small divisor problems, whether for finite
dimensional Hamiltonian systems in resonant situations for which the third
Melnikov condition is not expected to hold, or for problems of invariant tori
for Hamiltonian PDEs.

In section 1, we set up the question of existence of standing waves as a
problem of bifurcation theory in a space with symmetries, related to PDE
analogs of \cite{GoSc86} and \cite{IzVi03}. In section 2 we discuss the
Lyapunov-Schmidt reduction with the use of the Nash-Moser procedure. In
section 3 we present the estimates for the projected inverses, assuming
certain hypotheses on the separation of singular sites and estimates of the
spectra of the relevant linear operators. This step is at the heart of the
Nash-Moser procedure. In section 4 we estimate the measure of the set of
parameters for which the above hypotheses of separation and inversion are
satisfied. Finally in section 5 we show that solutions of the bifurcation
equation intersect the above set of good parameters, and show that the
intersection has asymptotically full measure. In the Appendix we discuss the
symmetries of the standing waves, and in addition we give a result about the
global bifurcation of periodic traveling waves, which has a much different
character than that of the standing waves.

\section{Central configurations of vortex filaments}

A central configuration of exactly parallel vortex filaments is a family of
straight and exactly parallel lines $(u_{j}(t),s)$, whose dynamics satisfy
$u_{j}(t)=e^{i\omega t}a_{j}$, where the coordinates $a_{j}\in\mathbb{C}$
satisfy
\begin{equation}
\omega a_{j}=\sum_{i=1,i\not =j}^{n}\frac{a_{j}-a_{i}}{|a_{j}-a_{i}|^{2}}
\label{Eqn:CentralConfiguration}%
\end{equation}
for all $j=1,\dots n$. Such configurations arise in studies of the $n$-body
problem, for example \cite{GaIz11}, \cite{MeHa91}, \cite{Ne01} and the
references contained in these papers.

Homographic solutions for the vortex filament problem are solutions with
\[
u_{j}(t,s)=w(t,s)a_{j},
\]
where $a_{j}$'s are complex numbers satisfying
\eqref{Eqn:CentralConfiguration}. In this class of solutions the shape of the
intersections of the filaments with a horizontal complex plane is homographic
with the shape of their intersection with any other horizontal plane
$\{x_{3}=c\}$ for any $x_{3}$ and at any time $t$. Homographic solutions of
this form satisfy the equations (\ref{Eqn:CentralConfiguration}) and
\begin{equation}
\partial_{t}w = i\left(  \partial_{ss}w + \omega\left\vert w\right\vert
^{-2}w\right)  \text{.} \label{pde}%
\end{equation}
The set of solutions $w(t,s)a_{j}$ foliate an invariant manifold. By rescaling
one may set $\omega=1$ in (\ref{Eqn:CentralConfiguration}). In the case $n=2$
of two filaments, the complement of this subspace, that is in center of
circulation coordinates, the orbit space is foliated by solutions of the
linear Schr\"{o}dinger equation, see \cite{KPV03}.

Solutions of equation \eqref{Eqn:CentralConfiguration} are known as central
configurations of the $n$-vortex problem, and solutions of this form have been
well studied in the literature. For instance, in \cite{KPV03} and
\cite{GaIz12}, a polygonal central configuration with a central filament is
discussed. Solutions to equation \eqref{pde} also generate homographic
solutions when a central filament with different circulation is fixed at the
central axis, see \cite{KPV03}.

Equation \eqref{pde} can be formulated as a Hamiltonian system, given by
\[
\partial_{t}w=i\partial_{\bar{w}}{H}(w)
\]
where the Hamiltonian for the system is
\[
{H} = \int_{0}^{2\pi} |\partial_{s}w|^{2}-\ln(|w|^{2}) \, ds~\text{.}%
\]
Since the Hamiltonian ${H}(w)$ is autonomous and invariant under change of
phase and translation, the energy ${H}$, the angular momentum
\[
{I} = \int_{0}^{2\pi}\left\vert w\right\vert ^{2}\,ds~,
\]
and the momentum
\[
{W}=\int_{0}^{2\pi} \bar{w}(i\partial_{s} w) \, ds
\]
are conserved quantities.

The simplest solutions to equation \eqref{pde} are relative equilibria of the
form $w(t,s)=e^{i\omega t}v(s)$. In these solutions the filaments turn around
a central axis at a constant uniform speed. For this class of solutions
equation \eqref{pde} becomes
\begin{equation}
\omega v=\partial_{ss}v+\left\vert v\right\vert ^{-2}v~, \label{ode}%
\end{equation}
with $\omega$ being the angular frequency.

One simple family of solutions of \eqref{ode} in helical form is given by
$v(s)=ae^{i\sigma s}$ with $\omega=-\sigma^{2}+a^{-2}$. Thus a continuum of
solutions of equation \eqref{pde} is given by
\begin{equation}
w(t,s)=ae^{i(\omega t+\sigma s)}\,\text{ with }\,\omega=-\sigma^{2}+a^{-2}
\label{sol}%
\end{equation}
which is parametrized by the vortex filament separation $a$. The solutions in
this continuum have a one dimensional orbit $ae^{i\theta}e^{i(\omega t+\sigma
s)}$ for $\theta\in{\mathbb{S}}^{1}$. Equation \eqref{pde} is similar to the
Kepler problem; other solutions to this equation are discussed in the appendix.

The equation \eqref{pde} is invariant under the Galilean transformation
\begin{equation}
e^{-i\alpha^{2}t}e^{i\alpha s}w(t,s-2\alpha t).
\end{equation}
Under the Galilean transformation, the solution \eqref{sol} generates the
family
\[
ae^{i((\omega-2\alpha\sigma-\alpha^{2})t+(\sigma+\alpha)s)}\,\text{.}%
\]
Since $\omega=-\sigma^{2}+a^{-2}$, by the choice $\alpha=-\sigma$, the
solution \eqref{sol} becomes $ae^{i\omega}$ with $\omega=a^{-2}$. Hence, under
the symmetry exhibited by the Galilean transformations, the different branches
$ae^{i(\omega t+\sigma s)}$ are transformed to the branch with $\sigma=0$.

From the previous remark, we may assume without loss of generality that the
bifurcation branch of periodic solutions is close to the co-rotating exactly
parallel solutions $ae^{i\omega t}$, with $\omega=1/a^{2}$. Subsequently,
using the Galilean transformation, any periodic solution near $ae^{i\omega t}$
can be reproduced as a solution that is a perturbation of $ae^{i(\omega
t+\sigma s)}$ for an arbitrary choice of $\sigma\in\mathbb{R}$.

The equation \eqref{pde} is invariant under the scaling
\[
\tau^{-1}w(\tau^{2}t,\tau s)\,\text{,}%
\]
so that any $P$-periodic boundary condition in $s$ may be fixed to $P=2\pi$.
However, once the spatial period has been fixed, the amplitude $a$ cannot be
scaled further. It therefore suffices to consider the problem of bifurcation
of solutions which have spatial period $2\pi$. The amplitude parameter $a$ has
the role of an external parameter in the problem, and solutions $ae^{i\omega
t}$ have different properties depending on $a$, due to patterns of resonance.

Finally, we address the bifurcation of $2\pi/\Omega$-periodic solutions in
time which are close to the exactly parallel central configuration $e^{i\omega
t}a$, using the change of coordinates
\begin{equation}
w(t,s)=ae^{\omega it}v(\Omega t,s)~\text{.} \label{cov}%
\end{equation}
Under rescaling the time variable, the $2\pi/\Omega$-periodic solutions of the
equation \eqref{pde} satisfy the equation
\begin{equation}
i\Omega\partial_{t}v=-\partial_{ss}v+\omega(1-|v|^{-2})v~\text{.} \label{pdev}%
\end{equation}
Since the equation depends of the amplitude $a$ only through the angular
frequency $\omega$, in the subsequent analysis we choose to parametrize
solution families of \eqref{pdev} through the frequency $\omega$.

\section{The Nash-Moser method}

By arguments of generic bifurcation, following the remarks in the appendix, it
is expected that there is one bifurcating branch of traveling waves and one of
standing waves. The traveling wave do not present a small divisor problem, and
we give a proof of their existence in the appendix.

To prove the existence of families of standing wave solutions, we use a
Nash-Moser method in a subspace of symmetries (\ref{sym}), a reduction which
simplifies several aspects of the proof; in particular in this setting
singular regions consist of isolated sites, the linearization is not
degenerate, and the kernel of the linear operator is one dimensional.

Using the change of variables $v=1+u$ in equation (\ref{pdev}), where $u$ is a
small perturbation, the bifurcation of periodic solutions are zeros of the
map
\begin{equation}
f(u;\Omega)=-i\Omega u_{t}-u_{ss}+\omega(u+\bar{u}) +\omega g(u,\overline
{u})\text{,} \label{f}%
\end{equation}
where $\Omega$ is the temporal frequency, and the nonlinearity is given by
\[
g(u,\bar{u})=\sum_{n=2}^{\infty}(-1)^{n}\bar{u}^{n} =\frac{\bar{u}^{2}}%
{1+\bar{u}}~.
\]
Our main result is stated in the following theorem.

\begin{theorem}
\label{Theorem2} For $\omega$ diophantine, with only one exceptional value
$\omega= \omega_{0}$, there exists $r_{0}>0$ and a Cantor subset
$\mathcal{C}\subset\lbrack0,r_{0}]$ with measure $\left\vert \mathcal{C}%
\right\vert \geq r_{0}(1-r_{0}C_{\beta})$ ($r_{0}\ll1$), such that the
operator $f(u;\Omega)$ has a nontrivial bifurcation branch of solutions of
$f(u;\Omega)=0$ parametrized by $r\in\mathcal{C}$. The solutions
$(u(s,t;r),\Omega(r))$ are analytic and periodic in $s$ and $t$, and they have
the following form;%
\[
u(t,s;r)=r\cos s\left(  \cos t-i\Omega_{0}\sin t\right)  +\mathcal{O}
(r^{2})\text{,}%
\]
where $\Omega(r)=\Omega_{0}+\Omega_{2}r^{2}+\mathcal{O}(r^{3})$ with
$\Omega_{0}=\sqrt{1+2\omega}$ and $\Omega_{2}\neq0$. Moreover, the solutions
$u(s,t)$ are Whitney smooth in $r$ and exhibit the symmetries
\begin{equation}
u(t,s)\ =u(t,-s)=\bar{u}(-t,s)\text{.} \label{sym}%
\end{equation}

\end{theorem}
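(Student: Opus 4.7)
The plan is to construct the branch of standing waves as zeros of the nonlinear map $f(u;\Omega)$ in (\ref{f}) via a Lyapunov--Schmidt reduction combined with a Nash--Moser iteration, working throughout in the subspace of functions satisfying the symmetries (\ref{sym}). I would Fourier expand $u$ in both $t$ and $s$ and study the linearization $L(\Omega)= -i\Omega\partial_{t}-\partial_{ss}+\omega(\cdot+\overline{\cdot\,})$ at $u=0$. Passing to real and imaginary parts yields the dispersion relation $\Omega^{2}k^{2}=\ell^{4}+2\omega\ell^{2}$; the resonant mode $k=\ell=1$ gives $\Omega_{0}=\sqrt{1+2\omega}$, and a direct computation in the symmetry subspace shows that $\ker L(\Omega_{0})$ is one-dimensional, spanned by $v_{0}(t,s)=\cos s\,(\cos t - i\Omega_{0}\sin t)$, matching the leading order in the statement.

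Next I would perform the Lyapunov--Schmidt reduction. Let $P$ denote the projection onto $\mathrm{span}(v_{0})$ and write $u=rv_{0}+w$ with $P^{\perp}w=w$. The range equation $P^{\perp}f(rv_{0}+w;\Omega)=0$ is where the small-divisor problem lives: inversion of $L(\Omega)$ off the kernel involves denominators that accumulate to zero. I would solve it by a Nash--Moser iteration with smoothing given by Fourier truncations on scales $N_{n}=N_{0}^{(3/2)^{n}}$, where approximate invertibility of each truncated linearized operator is obtained from resolvent estimates of Fr\"ohlich--Spencer type. This proceeds in two stages: first a separation-of-singular-sites lemma, showing that the Fourier indices $(k,\ell)$ producing dangerously small divisors are isolated; and then a matrix-expansion argument that uses this separation, together with the non-degeneracy of the linearization on isolated sites, to control the inverse with a tame loss of derivatives. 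At each step one excises a small set of parameters $(\omega,r)$ for which singular sites cluster or eigenvalues on them are too small; these exclusions are summable and the quadratic Newton convergence absorbs the divergent derivative losses.

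With $w=w(r,\Omega)$ constructed on the resulting good parameter set, I would turn to the bifurcation equation $\langle f(rv_{0}+w(r,\Omega);\Omega),\,v_{0}\rangle=0$. The symmetries (\ref{sym}) reduce it to a single real scalar equation. Expanding in $r$ and using $PL(\Omega_{0})v_{0}=0$, the implicit function theorem yields a Whitney-smooth curve $\Omega=\Omega(r)=\Omega_{0}+\Omega_{2}r^{2}+\mathcal{O}(r^{3})$; the twist coefficient $\Omega_{2}$ is computed from the quadratic/cubic terms of the nonlinearity $g(u,\bar u)=\bar u^{2}/(1+\bar u)$ via a Birkhoff-type normal form, and is nonzero except for one exceptional $\omega_{0}$. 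Because $\Omega_{2}\neq 0$, the map $r\mapsto\Omega(r)$ is a diffeomorphism, allowing one to pull the Cantor set of admissible frequencies back to a Cantor subset $\mathcal{C}\subset[0,r_{0}]$ with the asserted density $|\mathcal{C}|\geq r_{0}(1-r_{0}C_{\beta})$ by standard measure-excision estimates along the bifurcation curve.

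The main obstacle is the resolvent estimate at each Nash--Moser step: controlling the inverse of $P^{\perp}L(\Omega)P^{\perp}$ once the truly resonant singular sites have been removed, and doing so with sufficient uniformity that the excised parameter set remains small. This Fr\"ohlich--Spencer-type analysis is inseparable from the twist condition $\Omega_{2}\neq 0$, which is what guarantees the bifurcation branch actually sweeps transversally through the set of good frequencies; together they yield the Cantor branch with positive relative measure claimed in the theorem.
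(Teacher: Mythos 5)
Your proposal follows essentially the same route as the paper: Lyapunov--Schmidt reduction in the symmetry subspace, Nash--Moser iteration on the range equation with Fr\"ohlich--Spencer-type resolvent estimates built on separation of singular sites, then the bifurcation equation with twist condition $\Omega_{2}\neq 0$ and measure excision along the branch. Two small deviations are worth flagging but do not change the method: you take superexponential truncation scales $N_{n}=N_{0}^{(3/2)^{n}}$ where the paper uses geometric scales $L_{n}=2^{n}L_{0}$ (either choice works here), and you describe the final step as ``pulling back a Cantor set of frequencies by the diffeomorphism $r\mapsto\Omega(r)$.'' That phrasing is imprecise: since $\Omega'(0)=0$ the map is not a diffeomorphism at the origin, and in the paper the excised bad sets $N_{j,k}$ live in $(r,\Omega)$-space rather than frequency space alone. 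The actual argument estimates the measure of $\mathcal{C}\cap\mathcal{N}$ by comparing the curvature $\Omega_{2}r^{2}$ of the bifurcation curve against the nearly flat drift $\Omega_{z}(r)-\Omega_{j,k}=O(r^{2}/L_{n})$ of each excision curve, which is what makes the twist condition $\Omega_{2}\neq 0$ effective; this is a transversality argument along the curve, not a change of variables.
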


In fact, solutions of this theorem satisfy the additional symmetry%
\[
u(t,s)=u(t+\pi,s+\pi)\text{.}%
\]
The solutions of Theorem \ref{Theorem2} have an orbit which is a $3$-torus,
given by $e^{i\theta}u_{j}(t+\varphi,s+\psi)$ for $(\theta,\varphi,\psi
)\in\mathbb{T}^{3}$.

The exceptional value $\omega_{0}$ of the rotational frequency parameter is
due to the failure of the nondegeneracy condition of the amplitude - frequency
map in the construction of the bifurcation branch of periodic standing wave solutions.

Theorem~\ref{Theorem2} follows the basic structure of the Lyapunov center
theorem, however it concerns the time evolution of a PDE and subsequently the
analysis must deal with the problem of small divisors. In addition the problem
naturally encounters a resonant situation corresponding to its translation
invariance in the variable $s\in\mathbb{R}$, which presents a second
difficulty. To overcome the latter, we seek solutions which are symmetric
under reflections in space and time. A posteriori we show that this set of
time periodic solutions are the unique standing wave solutions. In order to
work in the setting of symmetric solutions, we define the Hilbert space%
\[
L_{sym}^{2}({\mathbb{T}}^{2};\mathbb{C})= \{u\in L^{2}({\mathbb{T}}
^{2};\mathbb{C}):u(t,s)=u(t,-s)=\bar{u}(-t,s)\}\text{,}%
\]
with the inner product%
\[
\left\langle u_{1},u_{2}\right\rangle =\frac{1}{(2\pi)^{2}}\int_{{\mathbb{T}
}^{2}}u_{1}\bar{u}_{2}dtds\text{.}%
\]
A function $u\in L_{sym}^{2}$ may be written in a Fourier basis
\[
u=\sum_{k\in\mathbb{N}}a_{0,k}\cos ks+\sum_{j\in\mathbb{N}^{+},k\in\mathbb{N}%
}\left(  a_{j,k}\cos jt+ib_{j,k}\sin jt\right)  \cos ks
\]
with $a_{j,k},b_{j,k}\in\mathbb{R}$. We are using the notation that
$\mathbb{N}=\{0,1,..\mathbb{\}}$ and $\mathbb{N}^{+}=\{1,2,...\}$. The
linearization of the map $f$ about $u=0$ is%
\[
L(\Omega)u:=f^{\prime}(0;\Omega)u=-i\Omega u_{t}-u_{ss} + \omega(u+\bar
{u})\text{.}%
\]
Explicitly, $L(\Omega)$ is diagonal in the above basis, and the Fourier
component $j=0$ is
\[
L(\Omega)(a_{0,k}\cos ks)=(k^{2}+2\omega)a_{0,k}\cos ks ~.
\]
For the Fourier component $j\in\mathbb{N}^{+}$, the linear map is
\[
L(\Omega)\left(  \left(
\begin{array}
[c]{c}%
a_{j,k}\\
b_{j,k}%
\end{array}
\right)  \cdot\left(
\begin{array}
[c]{c}%
\cos jt\\
i\sin jt
\end{array}
\right)  \cos ks\right)  =M_{j,k}\left(
\begin{array}
[c]{c}%
a_{j,k}\\
b_{j,k}%
\end{array}
\right)  \cdot\left(
\begin{array}
[c]{c}%
\cos jt\\
i\sin jt
\end{array}
\right)  \cos ks\text{,}%
\]
where $M_{j,k}$ is the matrix
\[
M_{j,k}(\Omega)=\left(
\begin{array}
[c]{cc}%
k^{2}+2\omega & -\Omega j\\
-\Omega j & k^{2}%
\end{array}
\right)  .
\]

The matrix $M_{j,k}$ has eigenvalues
\[
\lambda_{j,k,\pm1}=k^{2}+\omega\pm\sqrt{j^{2}\Omega^{2}+\omega^{2}}\text{,}%
\]
and normalized eigenvectors
\[
v_{j,k,\pm1}=\frac{1}{c_{j,\pm1}}\left(
\begin{array}
[c]{c}%
\omega\pm\sqrt{j^{2}\Omega^{2}+\omega^{2}}\\
-j\Omega
\end{array}
\right)  \text{,}%
\]
where%
\[
c_{j,\pm1}=\sqrt{2}\left(  \omega^{2}+j^{2}\Omega^{2}\pm\omega\sqrt
{j^{2}\Omega^{2}+\omega^{2}}\right)  ^{1/2}\text{.}%
\]

The orthonormal eigenbasis is given by $e_{0,0,1}=1$, $e_{0,k,1}=\sqrt{2}\cos
ks$ and%
\[
e_{j,k,l}=c_{k}v_{j,k,l}\cdot\left(
\begin{array}
[c]{c}%
\cos jt\\
i\sin jt
\end{array}
\right)  \cos ks
\]
for $(j,k,l)\in\mathbb{N}^{+}\mathbb{\times N\times}\{1,-1\}$, where
$c_{0}=\sqrt{2}$ and $c_{k}=2$ ($k\neq0$). That is, we have $\left\langle
e_{x},e_{y}\right\rangle =\delta_{x,y}$ and for every function $u\in
L_{sym}^{2}$,%
\[
u=\sum_{x\in\Lambda}\left\langle u,e_{x}\right\rangle e_{x}\text{,\qquad}
L(\Omega)u=\sum_{x\in\Lambda}\lambda_{x}\left\langle u,e_{x}\right\rangle
e_{x}\text{,}%
\]
where
\[
x\in\Lambda=\mathbb{N}^{+}\mathbb{\times N\times}\{1,-1\}\cup
\{0\}\mathbb{\times N\times}\{1\}\text{.}%
\]

Define the analytic norm%
\[
\left\Vert u\right\Vert _{\sigma}^{2}=\sum_{(j,k,l)\in\Lambda}\left\langle
u,e_{j,k,l}\right\rangle ^{2}e^{2\left\vert (j,k)\right\vert \sigma
}\left\langle (j,k)\right\rangle ^{2s}\text{,}%
\]
where $\left\langle (j,k)\right\rangle =\sqrt{1+j^{2}+k^{2}}$. Even though the
eigenfunctions $e_{x}$ depend on the frequency parameter $\Omega$, and the
norm $\| u \|_{\sigma}$ is defined through this system of eigenfunctions, the
norm is in fact independent of $\Omega$ since the basis $\{e_{x}%
\}_{x\in\Lambda}$ is orthonormal.

\begin{lemma}
The Hilbert space $h_{\sigma}$ of analytic functions given by%
\[
h_{\sigma}=\{u\in L_{sym}^{2}({\mathbb{T}}^{2};\mathbb{C}):\left\Vert
u\right\Vert _{\sigma}<\infty\}~.
\]
is an algebra for $s>1$;%
\[
\left\Vert uv\right\Vert _{\sigma}\leq c_{\sigma,s}\left\Vert u\right\Vert
_{\sigma}\left\Vert v\right\Vert _{\sigma}\text{.}%
\]

\end{lemma}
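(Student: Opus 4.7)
The plan is to reduce the algebra estimate for $\|\cdot\|_\sigma$ to the classical analytic-Sobolev algebra estimate on $\mathbb{T}^2$, and then run the standard convolution argument. The central observation is that on each $(j,k)$-block the eigenbasis $\{e_{j,k,l}\}_{l=\pm1}$ is related to the normalized trigonometric basis $\{c_k\cos jt\cos ks,\; i c_k\sin jt\cos ks\}$ by the orthogonal matrix built from $v_{j,k,\pm1}$ (the matrix $M_{j,k}$ being real symmetric), and so the sum of squared eigencoefficients over $l$ equals the sum of squared trigonometric coefficients on that block. Since the weight $e^{2|(j,k)|\sigma}\langle(j,k)\rangle^{2s}$ depends only on $(j,k)$, it is invariant under this block-orthogonal rotation. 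Consequently, writing the standard Fourier expansion $u(t,s)=\sum_{m\in\mathbb{Z}^2}\tilde u(m)e^{i(m_1 t+m_2 s)}$ (in which the symmetries $u(t,s)=u(t,-s)=\bar u(-t,s)$ impose $\tilde u(m_1,m_2)=\tilde u(m_1,-m_2)=\overline{\tilde u(-m_1,m_2)}$), one obtains an equivalence of norms
\[
\|u\|_\sigma^2 \;\asymp\; \sum_{m\in\mathbb{Z}^2}|\tilde u(m)|^2 e^{2|m|\sigma}\langle m\rangle^{2s},
\]
with constants independent of $u$ and of $\Omega$. This first step is really the only place where the eigenbasis structure enters the argument.

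Next, I verify closure of $L^2_{sym}$ under products: if $u,v$ satisfy $u(t,s)=u(t,-s)=\bar u(-t,s)$ and similarly for $v$, then $(uv)(t,-s)=u(t,-s)v(t,-s)=(uv)(t,s)$ and $\overline{(uv)(-t,s)}=\bar u(-t,s)\bar v(-t,s)=(uv)(t,s)$, so $uv\in L^2_{sym}$. Hence it suffices to establish the algebra estimate for the full Fourier norm on $\mathbb{T}^2$ displayed above.

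For this, I carry out the standard convolution argument. The Fourier coefficients satisfy $\widetilde{uv}(m)=\sum_n\tilde u(m-n)\tilde v(n)$. Using the triangle inequalities $e^{|m|\sigma}\le e^{|m-n|\sigma}e^{|n|\sigma}$ and $\langle m\rangle^s\le 2^s(\langle m-n\rangle^s+\langle n\rangle^s)$, and setting $F(m)=|\tilde u(m)|e^{|m|\sigma}\langle m\rangle^s$ and $G(m)=|\tilde v(m)|e^{|m|\sigma}\langle m\rangle^s$, one obtains the pointwise bound
\[
|\widetilde{uv}(m)|e^{|m|\sigma}\langle m\rangle^s \;\le\; 2^s\sum_n F(m-n)\,\frac{G(n)}{\langle n\rangle^s} \;+\; 2^s\sum_n \frac{F(m-n)}{\langle m-n\rangle^s}\,G(n).
\]
Applying Young's inequality $\ell^2\ast\ell^1\subset\ell^2$ to each term and bounding the $\ell^1$ factor via Cauchy--Schwarz,
\[
\sum_n \frac{G(n)}{\langle n\rangle^s} \;\le\; \|G\|_{\ell^2}\Bigl(\sum_{n\in\mathbb{Z}^2}\langle n\rangle^{-2s}\Bigr)^{1/2},
\]
one arrives at $\|uv\|_\sigma\le c_{\sigma,s}\|u\|_\sigma\|v\|_\sigma$. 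The sum $\sum_n\langle n\rangle^{-2s}$ is finite precisely because the underlying lattice is two-dimensional and $s>1=d/2$, which is where the hypothesis enters.

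The only delicate point is the norm-equivalence step: one must be honest about the normalization constants $c_k$ and $c_{j,\pm1}$ and confirm that they combine so that the equivalence holds with constants depending only on absolute parameters, not on $(j,k)$. Once that is settled, everything else is the classical Banach algebra estimate for analytic Gevrey/Sobolev spaces and presents no additional difficulty.
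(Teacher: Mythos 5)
Your proof is correct and follows essentially the same route as the paper's: reduce $\|\cdot\|_\sigma$ to the standard $\mathbb{Z}^2$-Fourier norm via equivalence, and then apply the classical convolution estimate for analytic-Sobolev spaces (which the paper simply cites from Craig--Wayne rather than spelling out). The only difference is that you fill in the details of both steps, correctly noting that the norm equivalence is immediate because $\{e_{j,k,l}\}$ is orthonormal and the weight depends only on $(j,k)$.
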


\begin{proof}
The result is quite standard. Let $u\in L^{2}({\mathbb{T}}^{2},\mathbb{C)}$,
then the norm%
\begin{equation}
\sum_{(j,k)\in\mathbb{Z}^{2}}\left\vert \left\langle u,e^{i(jt+ks)}%
\right\rangle \right\vert ^{2}e^{2\left\vert (j,k)\right\vert \sigma
}\left\langle (j,k)\right\rangle ^{2s} \label{n}%
\end{equation}
has the algebra property under the convolution for $s>1$, see \cite{CrWa94}.
The result follows from the fact that the two norms $\left\Vert \cdot
\right\Vert _{\sigma}$ and (\ref{n}) are equivalent in $L_{sym}^{2}$.
\end{proof}

\begin{lemma}
The nonlinear operator $g(u,\bar{u})\ $satisfies
\[
\left\Vert g(u,\bar{u})\right\Vert _{\sigma}<c_{\sigma}\left\Vert u\right\Vert
_{\sigma}^{2}~,
\]
for small $\left\Vert u\right\Vert _{\sigma}$. Thus, the map $f$ is well
defined and continuous in
\[
\left\{  u\in h_{\sigma}:\left\Vert u\right\Vert _{\sigma}<c_{\sigma}%
^{-1}\right\}  ~.
\]

\end{lemma}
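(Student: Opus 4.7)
The plan is to bound the series $g(u,\bar u) = \sum_{n \geq 2}(-1)^n \bar u^n$ term by term using the algebra property from the previous lemma, and then sum the resulting geometric series.

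A preliminary observation is that complex conjugation is bounded on $h_\sigma$, namely $\|\bar u\|_\sigma \leq C\|u\|_\sigma$ for some constant $C$. This is immediate from the norm equivalence used in the proof of the previous lemma: the weight $e^{2|(j,k)|\sigma}\langle(j,k)\rangle^{2s}$ depends only on $|(j,k)|$, hence is invariant under the Fourier-side action $(j,k) \mapsto (-j,-k)$ that corresponds to complex conjugation. Moreover $\bar u$ and its powers lie in $L^2_{sym}$, since the defining symmetries $u(t,-s)=u(t,s)$ and $\bar u(-t,s)=u(t,s)$ yield $\bar u(t,-s)=\bar u(t,s)$ and $\overline{\bar u(-t,s)} = u(-t,s) = \bar u(t,s)$, and these symmetries are preserved under multiplication.

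Iterating the algebra inequality $n-1$ times yields $\|\bar u^n\|_\sigma \leq c_{\sigma,s}^{n-1}\|\bar u\|_\sigma^n \leq c_{\sigma,s}^{n-1}C^n\|u\|_\sigma^n$, and summing gives
\[
\|g(u,\bar u)\|_\sigma \leq \sum_{n=2}^\infty c_{\sigma,s}^{n-1}C^n\|u\|_\sigma^n = \frac{C^2 c_{\sigma,s}\|u\|_\sigma^2}{1 - Cc_{\sigma,s}\|u\|_\sigma},
\]
which is finite whenever $Cc_{\sigma,s}\|u\|_\sigma < 1$. Choosing $c_\sigma$ larger than both $2Cc_{\sigma,s}$ and $2C^2 c_{\sigma,s}$, the bound $\|g(u,\bar u)\|_\sigma \leq c_\sigma\|u\|_\sigma^2$ then holds throughout the ball $\{\|u\|_\sigma < c_\sigma^{-1}\}$, proving the first assertion.

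For the continuity and well-definedness of $f(u;\Omega) = L(\Omega)u + \omega g(u,\bar u)$ on this ball: the linear part $L(\Omega)$ is a second-order differential operator, diagonal in $\{e_x\}$ with eigenvalues of order $j^2+k^2$, and is bounded between analytic scales (losing two powers in the polynomial weight $s$, or an arbitrarily small $\delta>0$ in the exponential weight $\sigma$), which is standard. Continuity of the nonlinear term follows from a local Lipschitz bound obtained by applying the same geometric-series argument to the telescoping identity $\bar u^n - \bar v^n = (\bar u-\bar v)\sum_{k=0}^{n-1}\bar u^k \bar v^{n-1-k}$. The whole argument is essentially elementary given the algebra lemma; the only substantive ingredient is the geometric-series estimate above, and no real obstacle arises.
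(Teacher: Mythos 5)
Your proof is correct and follows essentially the same route as the paper's: iterate the algebra inequality term by term on the power series for $g$ and sum the resulting geometric series. The extra details you supply — that conjugation preserves $L^2_{sym}$ and the $\sigma$-norm (indeed with $C=1$, since in the real basis $(\cos jt\cos ks,\,i\sin jt\cos ks)$ conjugation just flips the sign of the $b_{j,k}$ coefficient and both eigenvectors $e_{j,k,\pm1}$ sit at the same lattice point $(j,k)$), and the telescoping argument for Lipschitz continuity — are implicit but unstated in the paper, which disposes of well-definedness with the phrase ``follows from the equivariant property.'' Your remark that the linear part $L(\Omega)$ requires a loss of regularity (smaller $\sigma$ or fewer polynomial weights) to be bounded is accurate and worth keeping in mind, though for the Nash--Moser scheme one only needs $f$ to be continuous into a weaker scale, which is what the lemma is implicitly asserting.
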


\begin{proof}
That the map is well defined follows from the equivariant property and the
first statement. The first inequality follows from the algebraic property,%
\[
\left\Vert g(u,\bar{u})\right\Vert _{\sigma}=\left\Vert \sum_{n=2}^{\infty
}(-1)^{n}\bar{u}^{n}\right\Vert _{\sigma}\leq\sum_{n=2}^{\infty}c_{\sigma
,s}^{n-1}\left\Vert u\right\Vert _{\sigma}^{n}=\frac{c_{\sigma,s}\left\Vert
u\right\Vert _{\sigma}^{2}}{1-c_{\sigma,s}\left\Vert u\right\Vert _{\sigma}%
}<c_{\sigma}\left\Vert u\right\Vert _{\sigma}^{2}~.
\]

\end{proof}

The eigenvalue $\lambda_{j,k,-1}(\Omega)$ is zero when the frequency is equal
to%
\[
\Omega_{j,k}=j^{-1}\sqrt{k^{4}+2k^{2}\omega}\text{.}%
\]
In the case that there are no additional resonances, and without loss of
generality, we may analyze the branch of bifurcation from the eigenvalue
$\lambda_{1,1,-1}$; all other being equivalent up to scaling (see the
appendix). In the next proposition we prove that there are no additional
resonances if $\omega$ is irrational.

\begin{proposition}
Let $\Omega_{0}=\sqrt{1+2\omega}$. For $\omega$ irrational, the kernel of the
map $L(\Omega_{0})$ has dimension one, corresponding to the eigenfunction
$e_{1,1,-1}$.
\end{proposition}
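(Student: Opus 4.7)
The plan is to enumerate, in the orthonormal eigenbasis $\{e_{j,k,l}\}_{(j,k,l)\in\Lambda}$, all index triples at which the eigenvalue of $L(\Omega_0)$ can vanish, and then use the irrationality of $\omega$ together with the defining relation $\Omega_0^2=1+2\omega$ to force $(j,k,l)=(1,1,-1)$.

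First I would dispose of the $j=0$ sector. Here the eigenvalue is $k^2+2\omega$ for $k\in\mathbb{N}$, and setting it to zero would give $\omega=-k^2/2\in\mathbb{Q}$, contradicting irrationality (the case $k=0$ is excluded because $\omega\neq 0$). So no kernel element comes from this sector.

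Next I would handle $j\in\mathbb{N}^+$. Using the explicit formula
\[
\lambda_{j,k,\pm 1}(\Omega_0)=k^2+\omega\pm\sqrt{j^2\Omega_0^2+\omega^2},
\]
the equation $\lambda_{j,k,\pm 1}=0$ rearranges to $k^2+\omega=\mp\sqrt{j^2\Omega_0^2+\omega^2}$. Squaring and substituting $\Omega_0^2=1+2\omega$ produces the polynomial relation $k^4-j^2=2\omega(j^2-k^2)$. If $j^2\neq k^2$, this solves to $\omega=(k^4-j^2)/(2(j^2-k^2))\in\mathbb{Q}$, contradicting the hypothesis on $\omega$. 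Hence $j^2=k^2$, which plugged back gives $k^4=k^2$, so $(j,k)=(1,1)$ is the only possibility compatible with $j\geq 1$.

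Finally, I would pick out the correct sign $l$. Since $\Omega_0$ is real we have $\omega\geq -1/2$, so $k^2+\omega=1+\omega>0$. The sign equation $k^2+\omega=\mp\sqrt{\,\cdot\,}$ therefore requires the lower sign on the right, i.e.\ $l=-1$, and indeed a direct check gives $\lambda_{1,1,-1}(\Omega_0)=1+\omega-\sqrt{(1+\omega)^2}=0$ while $\lambda_{1,1,+1}(\Omega_0)=2(1+\omega)\neq 0$. This shows the kernel is exactly one-dimensional, spanned by $e_{1,1,-1}$. None of these steps contains a genuine obstacle; the only care needed is to track the spurious roots introduced when squaring, which is handled by the positivity check on $k^2+\omega$ in the last step.
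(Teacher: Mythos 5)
Your proof is correct and takes essentially the same route as the paper: the paper analyzes the determinant $d_{j,k}(\Omega_0)=\lambda_{j,k,1}\lambda_{j,k,-1}=(k^4-j^2)+2\omega(k^2-j^2)$ as a linear polynomial in the irrational $\omega$ with integer coefficients, which is exactly the polynomial relation you obtain by squaring $\lambda_{j,k,\pm1}=0$. The only cosmetic difference is that you need the extra sign check at the end to discard the spurious root $l=+1$ introduced by squaring, whereas the paper handles $(j,k)=(1,1)$ and $(0,0)$ directly by evaluating the eigenvalues, but the content is the same.
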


\begin{proof}
The eigenvalues of $M_{j,k}(\Omega_{0})$ are $\lambda_{j,k,l}(\Omega_{0})$ for
$l=\pm1$. The determinant of the matrix $M_{j,k}(\Omega_{0})$ is
\[
d_{j,k}(\Omega_{0})=\lambda_{j,k,1}\lambda_{j,k,-1} = 2\left(  k^{2} -
j^{2}\right)  \omega+ \left(  k^{4}-j^{2}\right)  \text{.}%
\]
Since the frequency $\Omega_{0}$ is chosen such that $M_{1,1}$ is not
invertible, then $d_{1,1}(\Omega_{0})=0$. Since $\omega$ is irrational, then
the determinant $d_{j,k}$ is zero only if both numbers $k^{2}-j^{2}$ and
$k^{4}-j^{2}$ are zero. For $(j,k,l)\in\Lambda$, the condition happens only
when $(j,k)=(1,1)$ or $(j,k)=(0,0)$. But the eigenvalue $\lambda
_{0,0,1}=2\omega$ is always positive, then the only zero eigenvalue in the
lattice $\Lambda$ is $\lambda_{1,1,-1}(\Omega_{0})$.
\end{proof}

The eigenvalue $\lambda_{0,0,-1}$ is always zero, and on $L^{2}({\mathbb{T}%
}^{2};\mathbb{C})$ it contributes to the null space of $L(\Omega)$. However on
the space $L_{sym}^{2}$ that takes the symmetries into account, the $(0,0,-1)$
is not an element of the lattice $\Lambda$, and therefore $e_{0,0,-1}\notin
L_{sym}^{2}$ does not contribute to the respective null space.

For $A\subseteq\Lambda$ a subset of lattice points, define $P_{A}$ as the
projection onto the Fourier components $x\in A\subseteq\Lambda$,%
\[
P_{A}u = \sum_{x\in A}\left\langle u,e_{x}\right\rangle e_{x}~\text{.}%
\]
The null space of $L(\Omega_{0})$ is supported on only one lattice point $N :=
\{x = (1,1,-1)\in\Lambda\}$ because of the symmetry reduction; hence
$L(\Omega_{0})e_{x}=0$. Thus, $P_{N}$ is the projection onto the one
dimensional kernel, and $P_{\Lambda\backslash N}$ is the projection onto its
complement. In future studies of quasi-periodic solutions the set $N$ will in
general comprise a possibly large but finite number of lattice sites.

\subsection*{Lyapunov-Schmidt reduction}

Define the decomposition $u=v+w$ into its components in the kernel and the
range of $L(\Omega_{0})$;%
\[
v = P_{N}u,\qquad w=P_{\Lambda\backslash N}u ~.
\]
Then the zeros of the nonlinear operator $f(u;\Omega)$ defined in \eqref{f}
are the solutions of the pair of equations
\[
P_{N}f(v+w;\Omega) = 0 ~,\qquad P_{\Lambda\backslash N}f(v+w;\Omega)=0 ~.
\]
Let $r\in\mathbb{R}$ be a parametrization of the kernel of $f^{\prime
}(0;\Omega_{0})$ given by%
\[
v(r)=re_{1,1,-1}\in L_{sym}^{2}({\mathbb{T}}^{2};\mathbb{C}) \text{.}%
\]
The strategy of the construction consists in solving $w(r,\Omega)$ in the
range equation $P_{\Lambda\backslash N}f(v(r)+w;\Omega)=0$, using a Nash-Moser
procedure. The key aspect of this step is that it is a small divisor problem,
which implies that the function $w(r,\Omega)$ can only be constructed in a
Cantor set of parameters $(r,\Omega)$ in a neighborhood of $(0,\Omega_{0})$.

\subsection*{Initial Nash-Moser step}

A first approximation of the solution is constructed projecting in the ball
$B_{0}$ of radius $L_{0}$,%
\[
B_{0}=\{(j,k)\in\Lambda\backslash N:\left\vert j\right\vert +\left\vert
k\right\vert < L_{0}\}\text{.}%
\]
To solve the first step of the Nash-Moser iteration, we use the following proposition;

\begin{lemma}
For a diophantine number $\omega$, if $\Omega$ is such that $\left\vert
\Omega-\Omega_{0}\right\vert <c\gamma/L_{0}^{2\tau+1}$, then
\[
\left\vert \lambda_{j,k,-1}\left(  \Omega\right)  \right\vert > c\gamma
/L_{0}^{2\tau+2}%
\]
for all $(j,k)$ with $\left\vert \left(  j,k\right)  \right\vert <L_{0}$.
\end{lemma}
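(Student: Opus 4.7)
The plan is to factor the proof through the critical frequency $\Omega_0$: first establish a lower bound on $|\lambda_{j,k,-1}(\Omega_0)|$ using the diophantine condition on $\omega$, then propagate this bound to nearby $\Omega$ by a mean-value argument.

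The starting point is the eigenvalue-product identity $\lambda_{j,k,-1}(\Omega)\lambda_{j,k,+1}(\Omega) = \det M_{j,k}(\Omega)$. Evaluated at $\Omega_0 = \sqrt{1+2\omega}$, this produces an explicit integer combination of $1$ and $\omega$:
\[
\det M_{j,k}(\Omega_0) \;=\; k^4 + 2k^2\omega - j^2(1+2\omega) \;=\; 2(k^2-j^2)\,\omega \;-\; (j^2 - k^4).
\]
For $(j,k)$ with $|(j,k)|<L_0$, $(j,k)\neq(1,1)$, and $k^2=j^2$ (which forces $k=j\ge 2$), the determinant equals $j^2(j^2-1)\ge 12$, so the desired bound is trivial. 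For $k^2\neq j^2$, setting $a = 2(k^2-j^2)\in\mathbb{Z}\setminus\{0\}$ with $|a|\le 4L_0^2$ and $b=j^2-k^4\in\mathbb{Z}$, the diophantine hypothesis $|a\omega - b|\ge \gamma/|a|^\tau$ yields
\[
|\det M_{j,k}(\Omega_0)|\;\ge\;\frac{c\gamma}{L_0^{2\tau}}.
\]
Combined with the trivial upper bound $\lambda_{j,k,+1}(\Omega_0) = k^2+\omega+\sqrt{j^2\Omega_0^2+\omega^2}\le C L_0^2$, this gives $|\lambda_{j,k,-1}(\Omega_0)|\ge c\gamma/L_0^{2\tau+2}$ at the base point.

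The second step is to propagate this lower bound to an open neighborhood of $\Omega_0$ by differentiating directly:
\[
\partial_\Omega\lambda_{j,k,-1}(\Omega) \;=\; -\frac{j^2\Omega}{\sqrt{j^2\Omega^2+\omega^2}},
\]
whose modulus is bounded by $j\le L_0$ uniformly on a fixed neighborhood of $\Omega_0$. By the mean value inequality, $|\lambda_{j,k,-1}(\Omega)-\lambda_{j,k,-1}(\Omega_0)|\le L_0|\Omega-\Omega_0|$. Choosing the constant $c$ in the hypothesis sufficiently small relative to the constant in the conclusion, the perturbation is dominated by half of the base-point lower bound, and the claim $|\lambda_{j,k,-1}(\Omega)|>c\gamma/L_0^{2\tau+2}$ follows.

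The main obstacle is really just careful bookkeeping of the $L_0$ exponents: the diophantine bound on the integer combination $a\omega-b$ costs a factor of $L_0^{2\tau}$ because $|a|$ can be of order $L_0^2$; division by $\lambda_{j,k,+1}$ costs another $L_0^2$; and the mean-value propagation trades one power of $L_0$ in the derivative against $|\Omega-\Omega_0|$. The diagonal stratum $k=j\neq 1$ must be handled separately since the diophantine bound becomes vacuous there, but it delivers a much stronger algebraic bound with no effort. The site $(1,1)$ itself is excluded as the kernel $N$, and $(0,0,-1)\notin\Lambda$ by the symmetry reduction, so no further sites need special attention.
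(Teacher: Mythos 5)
Your argument follows the paper's proof essentially line by line: bound $\det M_{j,k}(\Omega_0)=2(k^2-j^2)\omega+(k^4-j^2)$ from below by the diophantine property of $\omega$, combine with $\lambda_{j,k,+1}\lesssim L_0^2$ to bound $|\lambda_{j,k,-1}(\Omega_0)|\ge c\gamma/L_0^{2\tau+2}$, and then propagate to nearby $\Omega$ by the Lipschitz estimate $|\lambda_{j,k,-1}(\Omega)-\lambda_{j,k,-1}(\Omega_0)|\le j\,|\Omega-\Omega_0|$ (your derivative computation is exactly the paper's $|j||\Omega-\Omega_0|$ bound made explicit). The one useful refinement you add is the explicit split on $k^2=j^2$, where $2(k^2-j^2)=0$ so the inequality $|a\omega-b|\ge\gamma/|a|^\tau$ cannot be invoked as written; the paper's proof passes over this silently, whereas you note correctly that the determinant is then the integer $j^2(j^2-1)\ge 12$ and no diophantine input is needed.

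One caveat, inherited from the paper rather than introduced by you: with $|j|<L_0$ the propagated perturbation is only controlled by $L_0\cdot c\gamma/L_0^{2\tau+1}=c\gamma/L_0^{2\tau}$, which exceeds the base-point bound $c\gamma/L_0^{2\tau+2}$ by a factor of $L_0^2$, and choosing the hypothesis constant $c$ small cannot absorb an $L_0$-dependent factor; as written the hypothesis would need exponent $2\tau+3$ (or one needs a sharper, site-dependent bound on $\lambda_{j,k,+1}$ near the singular stratum) for the triangle inequality to close. You reproduce this bookkeeping from the paper, so it is not a flaw of your reconstruction, but you should not assert that "choosing $c$ small" fixes it.
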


\begin{proof}
Using the determinant of $M_{j,k}(\Omega_{0})$, and the fact that $\left\vert
q\omega- p\right\vert \geq\gamma/\left\vert q\right\vert ^{\tau}$, we have
\[
\left\vert d_{j,k}(\Omega_{0})\right\vert \geq\gamma/\left\vert 2\left(
k^{2}-j^{2}\right)  \right\vert ^{\tau}\geq c\gamma/L_{0}^{2\tau}\text{.}%
\]
Since $\left\vert \lambda_{j,k,1}\right\vert \leq cL_{0}^{2}$, then%
\[
\left\vert \lambda_{j,k,-1}(\Omega_{0})\right\vert \geq c\gamma/L_{0}
^{2\tau+2} ~.
\]
The result follows from the inequalities%
\begin{align*}
\left\vert \lambda_{j,k,-1}\left(  \Omega\right)  )\right\vert  &
\geq\left\vert \lambda_{j,k,-1}(\Omega_{0})\right\vert -\left\vert
\lambda_{j,k,-1}\left(  \Omega\right)  -\lambda_{j,k,-1}(\Omega_{0}
)\right\vert \\
&  \gtrsim\gamma/L_{0}^{2\tau+2}-\left\vert j\right\vert \left\vert
\Omega-\Omega_{0}\right\vert \gtrsim\gamma/L_{0}^{2\tau+2} ~.
\end{align*}

\end{proof}

By the implicit function theorem and the lemma above, there is a solution
$w_{0}(r,\Omega)$ of $P_{B_{0}}f(v(r)+w_{0},\Omega)=0$, which is defined over
the regime of parameters $(r,\Omega)$ with $0\leq r<r_{0}$ and $|\Omega
-\Omega_{0}|<c\gamma/L_{0}^{2\tau+1}$.

\subsection*{Subsequent Nash-Moser steps}

Let $P_{B_{n}}$ be the projection in the ball
\[
B_{n}=\{(j,k)\in\Lambda\backslash N:\left\vert j\right\vert +\left\vert
k\right\vert <L_{n}\}
\]
with $L_{n}=2^{n}L_{0}$ . The Nash-Moser method consists in providing a better
approximation $w_{n}$ at the $n^{th}$ step, where
\[
w_{n}(r,\Omega)=w_{n-1}(r,\Omega)+\delta w_{n}\text{.}%
\]
Here $\delta w_{n}$ is the correction given by the approximate Newton's
method
\begin{equation}
\label{Eqn:ApproxNewtonIteration}\delta w_{n} = -G_{B_{n}}P_{B_{n}}
f(w_{n-1}(r,\Omega);r,\Omega),
\end{equation}
where the inverse operator to $\partial_{w}f(w_{n-1}(r,\Omega);r,\Omega)$ on
$B_{n}$ is given by%
\[
G_{B_{n}}=\left(  P_{B_{n}}\partial_{w}f(w_{n-1})P_{B_{n}}\right)  ^{-1}
\text{.}%
\]

If the operator norm of the inverse has a tame estimate
\begin{equation}
\left\Vert G_{B_{n}}\right\Vert _{\sigma_{n}}\lesssim\gamma_{n}^{-2}\frac
{1}{d_{n}}\text{,}\label{green}%
\end{equation}
where $\sigma_{n}=\sigma_{n-1}-2\gamma_{n}$, $d_{n}<d_{n-1}$ and
$(r,\Omega)\in\mathcal{N}_{n}$, then we obtain an inductive estimate for
$\delta w_{n}$ of the form
\[
\left\Vert \delta w_{n}\right\Vert _{\sigma_{n}}<Cr^{2}e^{-\kappa^{n}%
}\ \ \text{with}\ \ \kappa\in(1,2)\text{.}%
\]
The following theorem quantifies the smallness condition that are required for
the convergence of the iterative scheme.

\begin{theorem}
If $d_{n}=L_{n}^{-\beta}$ for $\beta>3/2$ and $\sum_{n=1}^{\infty}2\gamma
_{n}\rightarrow\sigma_{0}/2$, from the estimate (\ref{green}), then
$w_{n}(r,\Omega)$ converges to the solution $w(r,\Omega)\in h_{\sigma_{0}/2}$
of
\[
P_{\Lambda\backslash N}f(v(r)+w(r,\Omega),\Omega)=0\text{,}%
\]
which is Whitney smooth in $(r,\Omega)$ over the Cantor set $\mathcal{N}%
=\cap_{n=1}^{\infty}\mathcal{N}_{n}$ (see the result in \cite{Cr00}).
\end{theorem}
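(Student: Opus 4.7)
The plan is to run the modified Newton scheme (\ref{Eqn:ApproxNewtonIteration}) as a quadratically convergent iteration, with the tame estimate (\ref{green}) playing the role of the bounded inverse derivative. I would propagate the inductive hypothesis that, for $(r,\Omega)\in\mathcal{N}_n$, the iterate satisfies $w_n\in B_n$ and $\|\delta w_n\|_{\sigma_n}\leq Cr^2 e^{-\kappa^n}$ for a fixed $\kappa\in(1,2)$, which by telescoping also gives a uniform bound $\|w_n\|_{\sigma_n}\leq 2Cr^2$ uniformly in $n$ once $r$ is small. The base case $n=0$ is provided by the implicit function theorem step already constructed above, where $f(v(r))=O(r^2)$ since $L(\Omega_0)v(r)=0$ and the nonlinearity $g$ is quadratic.

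For the inductive step, I would Taylor expand around $w_{n-1}$,
\[
f(w_n)=f(w_{n-1})+\partial_w f(w_{n-1})\delta w_n + Q_n(\delta w_n),
\]
where $Q_n(h)$ is quadratic in $h$ and satisfies $\|Q_n(h)\|_{\sigma_n}\leq c\|h\|_{\sigma_n}^2$ by the algebra property of $h_{\sigma_n}$ and the smoothness of $g$. Since $\delta w_n\in B_n$ and $G_{B_n}$ is defined to invert $P_{B_n}\partial_w f(w_{n-1})P_{B_n}$ exactly on $B_n$, the linear contribution cancels after projection onto $B_n$, giving $P_{B_n}f(w_n)=P_{B_n}Q_n(\delta w_n)$. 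Consequently, the residual that drives the next correction splits as
\[
P_{B_{n+1}}f(w_n)=P_{B_n}Q_n(\delta w_n)+(P_{B_{n+1}}-P_{B_n})f(w_n).
\]
The first summand is bounded by $c\|\delta w_n\|_{\sigma_n}^2\leq cC^2 r^4 e^{-2\kappa^n}$. For the second, since $w_n$ has finite Fourier support in $B_n$, its analyticity radius may be upgraded at controlled cost, $\|w_n\|_{\sigma_n+\gamma_{n+1}}\leq e^{\gamma_{n+1}L_n}\|w_n\|_{\sigma_n}$, and then the tail of $f(w_n)$ decays as $\|(I-P_{B_n})f(w_n)\|_{\sigma_n}\lesssim e^{-\gamma_{n+1}L_n}\|f(w_n)\|_{\sigma_n+\gamma_{n+1}}$, which with a summable choice $\gamma_n=O(n^{-2})$ is super-exponentially small in $n$.

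Applying the tame estimate (\ref{green}) to $\delta w_{n+1}=-G_{B_{n+1}}P_{B_{n+1}}f(w_n)$ then yields
\[
\|\delta w_{n+1}\|_{\sigma_{n+1}}\lesssim \gamma_{n+1}^{-2}L_{n+1}^{\beta}\bigl(cC^2 r^4 e^{-2\kappa^n}+e^{-\gamma_{n+1}L_n}\bigr),
\]
and to close the induction as $Cr^2 e^{-\kappa^{n+1}}$ it suffices that $\beta\log L_{n+1}+2\log\gamma_{n+1}^{-1}+2\log(Cr)\lesssim (2-\kappa)\kappa^n$, which holds for every $\kappa<2$ after taking $r_0$ small enough to absorb constants at $n=0$, since the left side grows only linearly in $n$ while the right side grows geometrically. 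Summability of $\|\delta w_n\|_{\sigma_n}$ together with $\sigma_n\downarrow\sigma_0/2$ then yields convergence of $w_n$ in $h_{\sigma_0/2}$ to the desired solution $w(r,\Omega)$ of $P_{\Lambda\setminus N}f(v(r)+w,\Omega)=0$. Whitney smoothness on the Cantor set $\mathcal{N}=\cap_n\mathcal{N}_n$ is obtained by differentiating the scheme in $(r,\Omega)$: each derivative produces an extra factor $\gamma_n^{-1}$ in the tame estimate, which is absorbed by the super-linear decay, and then one applies the Whitney extension argument of \cite{Cr00}. The main technical obstacle is balancing the polynomial loss $\gamma_n^{-2}L_n^{\beta}$ arising from the Green's function against the quadratic convergence rate; the hypothesis $\beta>3/2$ (which is determined by the measure estimates in the next section, not by this convergence argument) is comfortably compatible, and the truncation error is essentially free by analyticity.
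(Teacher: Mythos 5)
Your overall approach is the standard Craig--Wayne Nash--Moser convergence argument, which is exactly what the paper invokes when it states the inductive bound $\|\delta w_n\|_{\sigma_n}<Cr^2 e^{-\kappa^n}$ and defers details to \cite{Cr00}; so the structure (quadratic Newton step, projected inverse with tame loss $\gamma_n^{-2}d_n^{-1}$, geometric decay of the residual) matches. The identity $P_{B_n}f(w_n)=P_{B_n}Q_n(\delta w_n)$ following the exact inversion of $P_{B_n}\partial_w f(w_{n-1})P_{B_n}$ with $\delta w_n$ supported in $B_n$ is also correct, as is the observation that $\beta>3/2$ plays no role in convergence but only in the measure estimate of Section~5.

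The one genuine flaw is your treatment of the truncation term $(P_{B_{n+1}}-P_{B_n})f(w_n)$. You first upgrade $w_n$ to the larger radius $\sigma_n+\gamma_{n+1}$ at cost $e^{\gamma_{n+1}L_n}$ and then claim
$\|(I-P_{B_n})f(w_n)\|_{\sigma_n}\lesssim e^{-\gamma_{n+1}L_n}\|f(w_n)\|_{\sigma_n+\gamma_{n+1}}$. But $\|f(w_n)\|_{\sigma_n+\gamma_{n+1}}$ cannot be bounded by $e^{\gamma_{n+1}L_n}\|f(w_n)\|_{\sigma_n}$, because $g$ is quadratic: the algebra property gives $\|g(v+w_n)\|_{\sigma_n+\gamma_{n+1}}\lesssim\|v+w_n\|_{\sigma_n+\gamma_{n+1}}^2$, which picks up $e^{2\gamma_{n+1}L_n}$, so your two exponentials do not cancel --- they produce a net loss $e^{\gamma_{n+1}L_n}$ which blows up super-exponentially. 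The correct, and simpler, estimate goes in the opposite direction; one uses the built-in decrease of the analyticity radius, since for Fourier supports $|x|\geq L_n$ and $\sigma_{n+1}=\sigma_n-2\gamma_{n+1}$,
$\|(I-P_{B_n})f(w_n)\|_{\sigma_{n+1}}\leq e^{-2\gamma_{n+1}L_n}\|f(w_n)\|_{\sigma_n}$,
and then $\|f(w_n)\|_{\sigma_n}\lesssim r^2$ uniformly follows directly from the a priori bound $\|w_n\|_{\sigma_n}\leq 2Cr^2$ and the lemma $\|g(u)\|_\sigma\leq c\|u\|_\sigma^2$, with no exponential loss at all. Your final conclusion that the truncation error is super-exponentially small (since $\gamma_{n+1}L_n\sim 2^n/n^2\to\infty$) is correct, but the path you take to it is not.
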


\section{Estimates of the inverse}

Approximate inversion of the linearized operator $\partial_{w}f(v(r)+w;\Omega
)$ is key to applications of the Nash-Moser method. In this paper we follow an
approach that is motivated by techniques that have been developed to study
Anderson localization \label{FrSp}. We use the basic approach developed by
Craig and Wayne in \cite{CrWa93} with innovations by Bourgain \cite{Bo95}, and
Berti and Bolle \cite{BeBo13}, but we introduce certain simplifications that
clarify the method. We expect that this version will be useful in further
applications of Nash-Moser methods to Hamiltonian PDEs.

In this paper we describe the situation in which singular sites occur in
uniformly bounded clusters, and clusters are separated asymptotically in the
lattice. This is typically the case of time periodic solutions for PDEs in one
space dimension.

For the particular case of the problem of vortex filaments, we will be making
certain assumption on the nature and the spectra of the relevant linearized
operators. These assumptions will be verified in the subsequent Section 4,
where in order to do so we excise certain regions of parameter space
$(r,\Omega)\in\mathcal{N}$.

Given $w\in P_{\Lambda\backslash N}L_{sym}^{2}$, the projection $P_{\Lambda
\backslash N}\partial_{w}f$ of the linearization of the nonlinear operator
(\ref{f}) at the point $v(r)+w\in L_{sym}^{2}$ is given by
\begin{equation}
\label{Eqn:LinearizedP-equation}P_{\Lambda\backslash N}\partial_{w}%
f(v(r)+w;\Omega) = P_{\Lambda\backslash N} (L(\Omega)+\omega\partial
_{w}g(v+w))P_{\Lambda\backslash N}\text{.}%
\end{equation}
Because (\ref{fp}) is a Hamiltonian PDE, this linear operator is Hermitian.

In the Fourier basis $\{e_{x}\}$ the operator \eqref{Eqn:LinearizedP-equation}
is expressed by the matrix%
\[
H(w;r,\Omega)=D(\Omega)+T(w;r,\Omega)\text{,}%
\]
where, for $x,y\in\Lambda\backslash N$, the matrix $D$ is a diagonal;
\[
D(\Omega)=\hbox{\rm diag}_{x\in\Lambda\backslash N}(\lambda_{x}(\Omega
))\text{,}%
\]
and $T$ is a T\"{o}plitz linear operator
\[
T(w;r,\Omega)(x,y)=\left\langle \omega\partial_{w}g(v(r)+w)e_{y}%
,e_{x}\right\rangle \text{.}%
\]

Estimates of operators acting on analytic spaces $h_{\sigma}$ are realized
with the operator norm:
\[
\left\Vert G\right\Vert _{\sigma}:=\max\left\{
\begin{array}
[c]{c}%
\sup_{x}\sum_{y}\left\vert G(x,y)\right\vert e^{\sigma\left\vert
x-y\right\vert }\left\langle x-y\right\rangle ^{s}\\
\sup_{y}\sum_{x}\left\vert G(x,y)\right\vert e^{\sigma\left\vert
x-y\right\vert }\left\langle x-y\right\rangle ^{s}%
\end{array}
\right\}  .
\]
Consider the restriction of the linearized operator $H=P_{\Lambda\backslash
N}\partial_{w}f(v(r)+w;\Omega)$ to $\ell^{2}(E)$, where $E\subseteq
\Lambda\backslash N$ is a region of lattice sites of $\Lambda$. Its inverse
operator is denoted by%
\[
G_{E}=\left(  P_{E}HP_{E}\right)  ^{-1}%
\]

\begin{definition}
Let $A,B\subset\Lambda$. For the restriction of a matrix $H$, a linear mapping
of $\ell^{2}(\Lambda)$ to a mapping $\ell^{2}(B)\rightarrow\ell^{2}(A)$, we
define%
\[
H_{A}^{B}=P_{A}HP_{B}\text{.}%
\]
With this notation we have $(G_{E})_{A}^{B}=P_{A}(P_{E}HP_{E})^{-1}P_{B}$.
\end{definition}

Fix $w=w_{n-1}$ the approximation of the $(n-1)^{th}$ step of the Nash-Moser
iteration. The main linear estimate is of the matrix
\[
G_{B_{n}}(w_{n-1})=\left(  P_{B_{n}}H(w_{n-1})P_{B_{n}}\right)  ^{-1}\text{,}%
\]
where the region $B_{n}\subseteq\Lambda\backslash N$ is a ball centered at the
origin of large radius $L_{n}$.

\begin{definition}
We say that a site $x\in\Lambda$ is \emph{regular} if $\left\vert \lambda
_{x}\right\vert >d_{0}$. The subset of regular sites is denoted by
$A\subset\Lambda.$

We say that a site is \emph{singular} if $\left\vert \lambda_{x}\right\vert
\leq d_{0}$. We define $\mathcal{S}_{n}\subset\Lambda$ to be the set of
singular sites in the annulus $B_{n}\backslash B_{n-1}$.
\end{definition}

With these definitions we have the following decomposition of the lattice as a
disjoint union;
\[
\Lambda= A\cup_{n=1}^{\infty}\mathcal{S}_{n}\text{.}%
\]
The hypotheses we use to control the norm of $G_{B_{n}}$ are:

\begin{description}
\item[(h1)] The non-diagonal T\"{o}plitz matrix $T$ has the exponential decay
property%
\[
\left\Vert T(u)\right\Vert _{\sigma_{n}}\leq Cr_{0}%
\]
for $\left\Vert u\right\Vert _{\sigma_{n}}<r_{0}$.

\item[(h2)] The set of singular points in $B_{n}\backslash B_{n-1}$ is the
union of bounded regions $S_{j}$ that are pairwise separated. That is,
\[
\mathcal{S}_{n}=\cup_{j}S_{j}\text{,\qquad}\hbox{\rm rad}(S_{j})<c_{0}%
~\text{,}%
\]
and for $S_{j},S_{i}\subset\mathcal{S}_{n}\cup\mathcal{S}_{n-1}$, we assume
that%
\[
\hbox{\rm dist}(S_{i},S_{j})>4\ell_{n}~\text{.}%
\]

\item[(h3)] For $C(S_{j})$ the tubular neighborhood of radius $\ell_{n}$
around $S_{j}$, assume that the spectra of $H_{C(S_{j})}$ are bounded away
from zero by $d_{n}$. That is%
\[
\hbox{\rm spec}(H_{C(S_{j})})\subset\mathbb{R}\backslash\lbrack-d_{n}%
,d_{n}]~\text{.}%
\]

\end{description}

These will be shown to hold inductively for parameters $(r,\Omega)$ in the set
$\mathcal{N}_{n}$. We defined recursively
\[
\sigma_{n}=\sigma_{n-1}-2\gamma_{n}.
\]

\begin{theorem}
\label{Thm4}Assume hypothesis (h1)-(h3), and suppose that $cr_{0}/d_{0}<1/4$,
and
\[
c_{\gamma_{n}}r_{0}e^{-\gamma_{n}\ell_{n}}/d_{n}<1/4 ~,
\]
where
\[
c_{\gamma}=c\sum_{x\in\mathbb{Z}}e^{-\gamma\left\vert x\right\vert }
\lesssim\gamma^{-2}\text{.}%
\]
Let $E_{n}=B_{n-1}\cup\mathcal{S}_{n}\cup A_{n}$ with $A_{n}$ consisting of
regular sites in $\Lambda\backslash B_{n-1}$, then we have that
\begin{equation}
\left\Vert G_{E_{n}}(w_{n-1})\right\Vert _{\sigma_{n}}\lesssim c_{\gamma_{n}%
}/d_{n}\text{.}%
\end{equation}

\end{theorem}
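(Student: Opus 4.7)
The plan is to prove the inverse estimate by a resolvent-coupling scheme in the spirit of Fr\"ohlich--Spencer and its Nash--Moser adaptation by Craig--Wayne and Berti--Bolle. First, I would partition $E_n$ into disjoint \emph{resolvent boxes} on which the restricted Hamiltonian is invertible with an exponentially decaying Green's function, and then assemble the global inverse on $E_n$ via a Neumann expansion which converges by virtue of the off-diagonal decay (h1) and the $4\ell_n$ separation between singular clusters (h2).

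The resolvent boxes would be of three kinds: the inductively controlled region $B_{n-1}$ (treated as a single super-box whose inverse $G_{B_{n-1}}$ is supplied by the previous Nash--Moser step), the tubular neighborhoods $C(S_j)$ for each singular cluster $S_j \subset \mathcal{S}_n$ (on which (h3) furnishes a spectral gap $d_n$), and the singletons $\{y\}$ for each regular site $y \in A_n$ (on which the diagonal bound $|\lambda_y|\ge d_0$ is immediate). Hypothesis (h2) ensures that these boxes are mutually disjoint and, in particular, that each $C(S_j)$ sits inside $E_n$.

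The second and main task is to upgrade the spectral bound (h3), which is only an $\ell^{2}$ statement on $C(S_j)$, to a bound on $G_{C(S_j)}$ in the off-diagonal-weighted norm $\|\cdot\|_{\sigma_n}$. I would do this by combining (h3) with a local Neumann series about the ``regular shell'' of $C(S_j)$: writing $H_{C(S_j)} = H_{S_j} \oplus H_{C(S_j)\setminus S_j} + T_{\mathrm{coupling}}$, the bulk of the neighborhood (outside the defect $S_j$ of radius $<c_0$) consists of regular sites, so the perturbation from $T$ is small relative to the diagonal there and the resulting Green's function decays on a scale much smaller than $\ell_n$. The net estimate is $\|G_{C(S_j)}\|_{\sigma_n} \lesssim c_{\gamma_n}/d_n$; similarly $\|G_{\{y\}}\|_{\sigma_n}\le 1/d_0$ for singletons. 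Finally, I would assemble the global inverse via
\[
G_{E_n} = \sum_{k\ge 0}(-1)^{k}\Bigl(\bigoplus_{B}G_{B}\Bigr)\Bigl(T_{\mathrm{off}}\bigoplus_{B}G_{B}\Bigr)^{k},
\]
where $B$ ranges over the resolvent boxes and $T_{\mathrm{off}}=P_{E_n}TP_{E_n}-\bigoplus_{B}P_{B}TP_{B}$ collects the off-box hops. By (h1), each factor of $T_{\mathrm{off}}$ carries exponential decay $e^{-\sigma_{n}|x-y|}$. Any hop leaving a singular box $C(S_j)$ must travel a distance $\ge\ell_{n}$ before reaching another box, producing a factor $c_{\gamma_{n}}r_{0}e^{-\gamma_{n}\ell_{n}}/d_{n}<1/4$ by hypothesis; hops between regular singletons contribute $cr_{0}/d_{0}<1/4$. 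The Neumann series thus converges geometrically, and summing in the $\|\cdot\|_{\sigma_n}$ operator norm (losing a factor $c_{\gamma_n}\lesssim\gamma_n^{-2}$ from the $e^{\sigma_n|x-y|}\langle x-y\rangle^{s}$ weights) yields the stated bound.

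The main obstacle is precisely the passage from the $\ell^{2}$ spectral gap in (h3) to an exponentially decaying local Green's function on a box $C(S_j)$ of growing radius $\ell_n$: a naive Cauchy--Schwarz bound on matrix elements would cost a factor $e^{2\sigma_n\ell_n}$ that destroys the estimate. The resolution uses the bounded defect size $\mathrm{rad}(S_j)<c_0$ and the uniform regularity of the shell $C(S_j)\setminus S_j$; the smallness conditions $cr_0/d_0<1/4$ and $c_{\gamma_n}r_0 e^{-\gamma_n\ell_n}/d_n<1/4$ are exactly what is needed to make the interior Neumann iteration converge and to propagate the exponential decay from the boundary of $C(S_j)$ inward to the singular core.
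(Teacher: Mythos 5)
Your proposal is essentially the paper's proof: the paper constructs a preconditioner $L_{n}=G_{A_{n}}+P_{B_{n-1}}G_{C_{n}(B_{n-1})}+\sum_{j}P_{S_{j}}G_{C_{n}(S_{j})}$, writes $L_{n}H_{E_{n}}=I+K_{n}$, and inverts by the same Neumann series you describe, with the $\ell_{n}$ separation producing the $e^{-\gamma_{n}\ell_{n}}$ factor in $K_{n}$; and the paper's Lemma~\ref{Lem1} is exactly your ``main obstacle'' step, upgrading the $\ell^{2}$ spectral gap (h3) to a $\|\cdot\|_{\sigma_{n}}$ bound by peeling the regular shell $C(S_{j})\setminus S_{j}$ off the bounded core $S_{j}$ via resolvent identities. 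The one technical wrinkle in your formulation is that your collection of ``resolvent boxes'' ($B_{n-1}$, the $C(S_{j})$, and singletons of $A_{n}$) is not a partition of $E_{n}$ — the regular sites in the buffer $C(S_{j})\setminus S_{j}$ are double-counted — which is why the paper projects each local inverse back onto its core ($P_{S_{j}}G_{C_{n}(S_{j})}$ rather than $G_{C_{n}(S_{j})}$), keeping the box-cores disjoint while still harvesting the $\ell_{n}$-wide buffer for the exponential gain; with that adjustment your scheme is identical to the paper's.
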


The proof of Theorem \ref{Thm4} builds on a sequence of lemmas that serve to
estimate the norms of local Hamiltonians about singular sites.

Let $C_{n}(S_{j})$ be the neighborhood of radius $\ell_{n}$ around
$S_{j}\subset\mathcal{S}_{n}$,
\[
C_{n}(S_{j})=\{x\in\Lambda:\hbox{\rm dist}(x,S_{j})<\ell_{n}\}\text{.}%
\]
Because of hypothesis (h2) the sets $C_{n}(S_{j})$ are disjoint for distinct
$S_{j}$. We prove the following fact, omitting for clarity some instances of
the indices $n$ and $j$.

\begin{lemma}
\label{Lem1}Assume $cr_{0}/d_{0}<1/2$. From hypotheses (h1)-(h3), for
$\left\Vert u\right\Vert _{\sigma_{n}}<r_{0}$ and for any regular subset
$E\subset A$ or for $C(S)$ an $\ell$-tubular neighborhood of a singular region
$S$, we have
\[
\left\Vert G_{E}\right\Vert _{\sigma_{n}}\leq\frac{2}{d_{0}},\qquad\left\Vert
G_{C(S)}\right\Vert _{\sigma_{n}}\leq\frac{C}{d_{n}}\text{.}%
\]

\end{lemma}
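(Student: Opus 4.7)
The plan is to handle the two cases of Lemma \ref{Lem1} separately. The first is a direct Neumann series argument using diagonal dominance on a regular set, and the second reduces to the first via a Schur complement, with the spectral gap from hypothesis (h3) entering through the fact that the singular cluster $S$ is a bounded set of lattice sites.

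\textbf{Regular case, $E \subset A$.} Decompose $H_E = D_E + T_E$. Since every $x \in E$ is regular, $|\lambda_x| > d_0$, so $D_E$ is invertible with $\|D_E^{-1}\|_{\sigma_n} \leq 1/d_0$ (the $\sigma$-norm of a diagonal matrix is its sup). Write $H_E^{-1} = (I + D_E^{-1}T_E)^{-1} D_E^{-1}$. By the algebra property of $h_{\sigma_n}$ and (h1),
\[
\|D_E^{-1} T_E\|_{\sigma_n} \leq \frac{Cr_0}{d_0} < \tfrac{1}{2},
\]
so the Neumann series converges and $\|G_E\|_{\sigma_n} \leq 2/d_0$.

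\textbf{Singular case, $E = C(S)$.} The structural input is $\mathrm{rad}(S) < c_0$ from (h2), so $|S|$ is bounded by a constant depending only on $c_0$, while $A' := C(S)\setminus S$ consists entirely of regular sites. Block-decompose
\[
H_{C(S)} = \begin{pmatrix} H_{SS} & H_{SA'} \\ H_{A'S} & H_{A'A'} \end{pmatrix}.
\]
The off-diagonal blocks come from $T$ alone (the diagonal $D$ cannot couple disjoint index sets), so (h1) yields $\|H_{SA'}\|_{\sigma_n}, \|H_{A'S}\|_{\sigma_n} \leq Cr_0$. Applying the first case to the regular set $A'$ gives $\|H_{A'A'}^{-1}\|_{\sigma_n} \leq 2/d_0$. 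Form the Schur complement $Q := H_{SS} - H_{SA'} H_{A'A'}^{-1} H_{A'S}$; by the block inversion identity, $Q^{-1}$ is the $(S,S)$ block of $H_{C(S)}^{-1}$. Hypothesis (h3) gives $\|H_{C(S)}^{-1}\|_{\ell^2\to\ell^2} \leq 1/d_n$, hence $\|Q^{-1}\|_{\ell^2\to\ell^2} \leq 1/d_n$. Since $|S|$ is bounded, all norms on $\ell^2(S)$ are equivalent up to a constant depending only on $c_0$, and consequently
\[
\|Q^{-1}\|_{\sigma_n} \leq C/d_n.
\]
Substituting into
\[
H_{C(S)}^{-1} = \begin{pmatrix} Q^{-1} & -Q^{-1}H_{SA'}H_{A'A'}^{-1} \\ -H_{A'A'}^{-1}H_{A'S}Q^{-1} & H_{A'A'}^{-1} + H_{A'A'}^{-1}H_{A'S}Q^{-1}H_{SA'}H_{A'A'}^{-1} \end{pmatrix}
\]
and iterating the algebra property, every block has $\sigma_n$-norm at most $C/d_n$ (the stand-alone $H_{A'A'}^{-1}$ summand is dominated since $d_n < d_0$). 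Summing over the four blocks gives $\|G_{C(S)}\|_{\sigma_n} \leq C/d_n$.

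\textbf{Main obstacle.} The one nontrivial step is promoting the $\ell^2$ spectral bound $\|Q^{-1}\|_{\ell^2}\leq 1/d_n$ to the $\sigma_n$-norm estimate on $Q^{-1}$. This is available precisely because (h2) forces the singular core $S$ to be of uniformly bounded radius, so the Schur complement lives on a finite-dimensional space of bounded dimension on which operator norms and the $\sigma_n$-norm of the matrix entries are comparable. Without this finite-cluster hypothesis, the $\ell^2$ bound coming from (h3) would not transfer to the analytic norm, and the whole Nash--Moser scheme would fail at this link — which is why (h2) is the central inductive object of the construction.
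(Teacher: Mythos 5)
Your proof is correct and follows essentially the same route as the paper's: the regular case by Neumann series on $D_E^{-1}T_E$, and the singular case by bounding the $(S,S)$ block of $G_{C(S)}$ via the $L^2$ bound from (h3) combined with the finite radius of $S$, then propagating to the remaining blocks. The paper derives the block formulas from resolvent identities (writing $H_{C(S)} = H_E\oplus H_S + \Gamma$ with the connection matrix $\Gamma = T_E^S + T_S^E$), which is algebraically identical to your Schur-complement block inversion; your observation that $Q^{-1}$ is the $(S,S)$ block of $G_{C(S)}$ is exactly the quantity the paper estimates as $(G_{C(S)})_S^S$.
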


\begin{proof}
Since $E$ is regular, $\left\Vert \left(  D_{E}^{E}\right)  ^{-1}T_{E}%
^{E}\right\Vert _{\sigma_{n}}\leq cr_{0}/d_{0}<1/2$, then%
\[
\left\Vert G_{E}\right\Vert _{\sigma_{n}}=\left\Vert \left(  I+\left(
D_{E}^{E}\right)  ^{-1}T_{E}^{E}\right)  ^{-1}\left(  D_{E}^{E}\right)
^{-1}\right\Vert _{\sigma_{n}}\leq\frac{2}{d_{0}}\text{.}%
\]

Since $H_{C(S)}$ is self-adjoint, by hypothesis (h3), the $L^{2}$-norm of
$G_{C(S)}$ is bounded by $1/d_{n}$, $\left\Vert G_{C(S)}\right\Vert _{0}%
\leq1/d_{n}$. Since the set $S$ has radius bounded by $c_{0}$, then%
\[
\left\Vert (G_{C(S)})_{S}^{S}\right\Vert _{\sigma_{n}}\leq e^{\sigma_{n}c_{0}%
}c_{0}^{s}\left\Vert (G_{C(S)})_{S}^{S}\right\Vert _{0}\leq\frac{C}{d_{n}%
}\text{.}%
\]
Since singular regions are separated, then the set $E=C(S)\setminus S$ is
regular. From the self-adjoint property, only two more cases require
verification,
\[
\left\Vert (G_{C(S)})_{E}^{S}\right\Vert _{\sigma_{n}}\leq C/d_{n}\text{ and
}\left\Vert (G_{C(S)})_{E}^{E}\right\Vert _{\sigma_{n}}\leq C/d_{n}.
\]

Let us define the connection matrix
\[
\Gamma:=H_{C(S)}-H_{E}\oplus H_{S}=T_{E}^{S}+T_{S}^{E}.
\]
From resolvent identities we have that
\[
G_{C(S)}=G_{E}\oplus G_{S}-G_{E}\oplus G_{S}\Gamma G_{C(S)}\text{,}%
\]
then
\begin{equation}
(G_{C(S)})_{E}^{S}=G_{E}T_{E}^{S}(G_{C(S)})_{S}^{S}\text{.}%
\end{equation}
Therefore, for the first case we have
\[
\left\Vert (G_{C(S)})_{E}^{S}\right\Vert _{\sigma_{n}}\leq C\frac{r_{0}}%
{d_{0}}\frac{1}{d_{n}}\text{.}%
\]

From resolvent identities, we have that
\[
G_{C(S)}=G_{E}\oplus G_{S}-G_{E}\oplus G_{S}\Gamma G_{E}\oplus G_{S}%
+G_{E}\oplus G_{S}\Gamma G_{C(S)}\Gamma G_{E}\oplus G_{S},
\]
then%
\begin{equation}
(G_{C(S)})_{E}^{E}=G_{E}+G_{E}T_{E}^{S}(G_{C(S)})_{S}^{S}T_{S}^{E}%
G_{E}\text{.}%
\end{equation}
Thus, for the second inequality we have
\[
\left\Vert (G_{C(S)})_{E}^{E}\right\Vert _{\sigma_{n}}\leq\frac{2}{d_{0}%
}+C\frac{r_{0}^{2}}{d_{0}^{2}}\frac{1}{d_{n}}\leq C\frac{1}{d_{n}}\text{.}%
\]

\end{proof}

\begin{lemma}
\label{Lem2}Let $P_{x}G$ be the projection on the $x^{th}$ row of $G$, then
for any $\sigma>\gamma>0$%
\[
\left\Vert G\right\Vert _{\sigma-\gamma}\leq c_{\gamma}\sup_{x}\left\Vert
P_{x}G\right\Vert _{\sigma}\text{.}%
\]
Let $A$ and $B$ be two sets such that $\hbox{\rm dist}(A,B)\geq\ell$, then%
\[
\left\Vert P_{A}GP_{B}\right\Vert _{\sigma-\gamma}\leq e^{-\gamma\ell
}\left\Vert G\right\Vert _{\sigma}\text{.}%
\]

\end{lemma}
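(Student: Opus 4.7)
The proof is a direct calculation from the definition of $\|\cdot\|_\sigma$, which amounts to a max of row-sum and column-sum operator norms, weighted by the factor $e^{\sigma|x-y|}\langle x-y\rangle^s$. The strategy is to split the exponent $(\sigma-\gamma)|x-y| = \sigma|x-y| - \gamma|x-y|$ and then use the extra slack $e^{-\gamma|x-y|}$ either to pay for the sum of a single row or column (giving the factor $c_\gamma$), or to harvest the prescribed separation between $A$ and $B$ (giving the factor $e^{-\gamma\ell}$).

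For the first inequality, the key observation is that since $P_x G$ is supported on the single row indexed by $x$, its row-sum and column-sum reduce to
\[
\|P_x G\|_\sigma \;=\; \sum_y |G(x,y)|\, e^{\sigma|x-y|}\langle x-y\rangle^s.
\]
The row-sum bound for $\|G\|_{\sigma-\gamma}$ then follows immediately from dropping the factor $e^{-\gamma|x-y|}\leq 1$ in each row, giving
\[
\sup_{x'}\sum_y|G(x',y)|\, e^{(\sigma-\gamma)|x'-y|}\langle x'-y\rangle^s \;\leq\; \sup_{x'}\|P_{x'}G\|_\sigma.
\]
For the column-sum bound, one writes
\[
\sum_x|G(x,y)|\, e^{(\sigma-\gamma)|x-y|}\langle x-y\rangle^s \;=\; \sum_x \bigl(|G(x,y)|\, e^{\sigma|x-y|}\langle x-y\rangle^s\bigr)\, e^{-\gamma|x-y|},
\]
bounds the term in parentheses by $\|P_x G\|_\sigma$ (as a single summand of that row sum), and pulls out $\sup_x\|P_x G\|_\sigma$; the remaining lattice sum $\sum_x e^{-\gamma|x-y|}$ is translation-invariant and equals $c_\gamma$. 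Taking the max of the two cases yields the claim.

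For the second inequality, one uses that $P_A G P_B$ has kernel $\chi_A(x)\chi_B(y)G(x,y)$, supported on pairs with $|x-y|\geq\ell$. Splitting the exponential as above,
\[
e^{(\sigma-\gamma)|x-y|} = e^{\sigma|x-y|}\cdot e^{-\gamma|x-y|} \;\leq\; e^{\sigma|x-y|}\, e^{-\gamma\ell}
\]
on the support, so both the row and column sums of $P_A G P_B$ in the $\|\cdot\|_{\sigma-\gamma}$ norm are bounded by $e^{-\gamma\ell}$ times the corresponding sums for $G$ in the $\|\cdot\|_\sigma$ norm, giving $\|P_AGP_B\|_{\sigma-\gamma}\leq e^{-\gamma\ell}\|G\|_\sigma$.

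There is no real obstacle; the only point requiring attention is the bookkeeping that reduces $\|P_xG\|_\sigma$ to a pure row sum and ensures that the polynomial weight $\langle x-y\rangle^s$ is absorbed uniformly into $c_\gamma$ via the estimate $c_\gamma = c\sum_x e^{-\gamma|x|}\langle x\rangle^s \lesssim \gamma^{-2}$ (with the implicit exponent depending on $s$ and the lattice dimension).
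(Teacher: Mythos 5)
Your proof is correct and follows essentially the same decomposition as the paper: split $e^{(\sigma-\gamma)|x-y|}=e^{\sigma|x-y|}e^{-\gamma|x-y|}$, absorb the $e^{\sigma|x-y|}\langle x-y\rangle^s$ weight into $\|P_xG\|_\sigma$ (noting it reduces to a pure row sum), and spend the residual $e^{-\gamma|x-y|}$ either on the lattice sum (giving $c_\gamma$ in the column-sum case) or on the $A$--$B$ separation (giving $e^{-\gamma\ell}$). One small infelicity in your closing remark: the polynomial weight $\langle x-y\rangle^s$ is already absorbed into $\|P_xG\|_\sigma$ in your own computation, not into $c_\gamma$, which the paper defines as the plain exponential sum $c\sum e^{-\gamma|x|}$.
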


\begin{proof}
The first result follows from the inequalities
\[
\sup_{y}\sum_{x}\left\vert G(x,y)\right\vert e^{(\sigma-\gamma)\left\vert
x-y\right\vert }\left\langle x-y\right\rangle ^{s}\leq\sup_{y}\sum
_{x}\left\Vert P_{x}G\right\Vert _{\sigma}e^{-\gamma\left\vert x-y\right\vert
}\leq c_{\gamma}\sup_{x}\left\Vert P_{x}G\right\Vert _{\sigma}~\text{,}%
\]
and
\[
\sup_{x}\sum_{y}\left\vert G(x,y)\right\vert e^{(\sigma-\gamma)\left\vert
x-y\right\vert }\left\langle x-y\right\rangle ^{s}\leq\sup_{x}\left\Vert
P_{x}G\right\Vert _{\sigma}~.
\]

The second result follows from the long step $\ell$ between $A$ and $B$ that
gives the estimate $e^{-\gamma\ell}$ in the exponential decay of the norm
$\sigma-\gamma$. That is, since $\left\vert x-y\right\vert >\ell$ for $x\in A$
and $y\in B$, then
\[
\sup_{y\in B}\sum_{x\in A}\left\vert G(x,y)\right\vert e^{(\sigma
-\gamma)\left\vert x-y\right\vert }\left\langle x-y\right\rangle ^{s}\leq
e^{-\gamma\ell}\left\Vert G\right\Vert _{\sigma}~\text{,}%
\]
and similarly for the supremum over $x$.
\end{proof}

\begin{figure}[th]
\centering
\par
\begin{pspicture}(-3,-3)(3,3)
\SpecialCoor
\psframe[linewidth=2pt](-2.5,2.5)(2.5,-2.5)
\psframe(-2.5,2.5)(-.5,1)
\psframe(2.5,-2.5)(.5,-1)
\psframe(-1,1)(1,-1)
\psline[linestyle=dotted](-1,2.5)(-1,-2.5)
\psline[linestyle=dotted](1,-2.5)(1,2.5)
\psline[linestyle=dotted](-2.5,1)(2.5,1)
\psline[linestyle=dotted](-2.5,-1)(2.5,-1)
\rput(1.5,1.5){$0$}
\rput(-1.5,-1.5){$0$}
\rput(0,0){$G_{A_n}$}
\rput(-1.7,1.7){$G_{C(B_{n-1})}$}
\rput(1.7,-1.7){$G_{C(S_{j})}$}
\rput[b](1.7,2.5){$S_{j}$}
\rput[b](-1.5,2.5){$C(B_{n-1})$}
\rput[b](0,2.5){$A_{n}$}
\rput[r](-2.5,-1.7){$S_{j}$}
\rput[r](-2.5,1.7){$B_{n-1}$}
\rput[r](-2.5,0){$A_{n}$}
\NormalCoor
\end{pspicture}
\caption{Preconditioner matrix $L_{n}$}%
\end{figure}
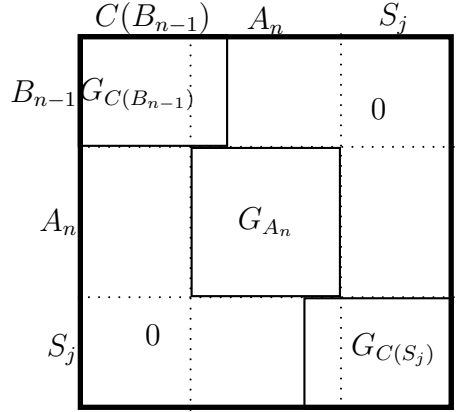

\begin{proof}
[Theorem \ref{Thm4}]The main estimate on the inverse
\[
G_{B_{n}}=\left(  P_{B_{n}}H(w_{n-1})P_{B_{n}}\right)  ^{-1}%
\]
is obtained by a approach distilled from \cite{BeBo13}; we construct a
preconditioner matrix $L_{n}$ from $G_{C_{n}(B_{n-1})}$, $G_{A_{n}}$ and
inverses $G_{C_{n}(S_{j})}$ of the local Hamiltonians $H_{C_{n}(S_{j})}$ for
$S_{j}\subset\mathcal{S}_{n}$. These inverses satisfy estimates as follows:
Since $A_{n}$ is regular, then
\begin{equation}
\left\Vert G_{A_{n}}\right\Vert _{\sigma_{n}}\lesssim1/d_{0}.
\end{equation}
By (h2) and Lemma \ref{Lem3}, then
\begin{equation}
\left\Vert G_{C_{n}(B_{n-1})}\right\Vert _{\sigma_{n-1}}\lesssim
c_{\gamma_{n-1}}/d_{n-1}. \label{ps}%
\end{equation}
By (h1)-(h3) and Lemma \ref{Lem1}, then%
\begin{equation}
\left\Vert G_{C_{n}(S_{j})}\right\Vert _{\sigma_{n-1}}\lesssim1/d_{n}.
\end{equation}

We define the preconditioner matrix $L_{n}$ as
\begin{equation}
L_{n}=G_{A_{n}}+P_{B_{n-1}}G_{C_{n}(B_{n-1})}+\sum_{S_{j}\subset
\mathcal{S}_{n}}P_{S_{j}}G_{C_{n}(S_{j})}\text{.} \label{Ln}%
\end{equation}
Thus, we have that
\begin{align*}
L_{n}(H_{E_{n}}^{E_{n}})  &  =G_{A_{n}}(H_{A_{n}}^{A_{n}}+H_{A_{n}}%
^{E_{n}\backslash A_{n}})+P_{B_{n-1}}G_{C_{n}(B_{n-1})}(H_{C_{n}(B_{n-1}%
)}^{C_{n}(B_{n-1})}+H_{C_{n}(B_{n-1})}^{E_{n}\backslash C_{n}(B_{n-1})})\\
&  +\sum_{S_{j}\subset\mathcal{S}_{n}}P_{S_{j}}G_{C_{n}(S_{j})}(H_{C_{n}%
(S_{j})}^{C_{n}(S_{j})}+H_{C_{n}(S_{j})}^{E_{n}\backslash C_{n}(S_{j}%
)})=I_{E_{n}}+K_{n},
\end{align*}
where, using that $G_{A_{n}}H^{A_{n}}_{A_{n}} = I_{A_{n}}$ etc., we have that
\[
K_{n}=G_{A_{n}}H_{A_{n}}^{E_{n}\backslash A_{n}}+P_{B_{n-1}}G_{C_{n}(B_{n-1}
)}H_{C_{n}(B_{n-1})}^{E_{n}\backslash C_{n}(B_{n-1})}+\sum_{S_{j}
\subset\mathcal{S}_{n}}P_{S_{j}}G_{C_{n}(S_{j})}H_{C_{n}(S_{j})}
^{E_{n}\backslash C_{n}(S_{j})}\text{.}%
\]
We conclude from (\ref{LnH}) that as long as $(I_{E_{n}}+K_{n})^{-1}$ exists,
we have
\begin{equation}
G_{E_{n}}=(H_{E_{n}}^{E_{n}})^{-1}=(I_{E_{n}}+K_{n})^{-1}L_{n}. \label{LnH}%
\end{equation}

A bound on the operator norm $\left\Vert L_{n}\right\Vert _{\sigma_{n}}$ uses
that $\left\Vert G_{A_{n}}\right\Vert _{\sigma_{n}}\leq c/d_{0}$ and
$\left\Vert P_{B_{n-1}}G_{C_{n}(B_{n-1})}\right\Vert _{\sigma_{n-1}}\leq
c_{\gamma_{n-1}}/d_{n-1}$. Then referring to Lemma \ref{Lem2}, we have
\[
\left\Vert \sum_{S_{j}\subset\mathcal{S}_{n}}P_{S_{j}}G_{C_{n}(S_{j}%
)}\right\Vert _{\sigma_{n-1}-\gamma_{n}}\leq c_{\gamma_{n}}\sup_{x\in
\mathcal{S}_{n}}\left\Vert G_{C_{n}(x)}\right\Vert _{\sigma_{n-1}}\lesssim
c_{\gamma_{n}}/d_{n}\text{.}%
\]
Since $L_{n}$ is the sum of the above three operators, we conclude that
\[
\left\Vert L_{n}\right\Vert _{\sigma_{n}}\leq c/d_{0}+c_{\gamma_{n-1}}%
/d_{n-1}+c_{\gamma_{n}}/d_{n}\lesssim c_{\gamma_{n}}/d_{n}.
\]

There is a recursive relation among the constants $\gamma_{n}$, $\ell_{n}$ and
$d_{n}$, for which we will show that $\left\Vert K_{n}\right\Vert _{\sigma
_{n}}\leq3/4$, hence $\left\Vert I+K_{n}\right\Vert _{\sigma_{n}}\leq4$.
Therefore, we obtain the result of the theorem%
\[
\left\Vert G_{E_{n}}\right\Vert _{\sigma_{n}}\leq4\left\Vert L_{n}%
^{-1}\right\Vert _{\sigma_{n}}\lesssim c_{\gamma_{n}}/d_{n}.
\]

To estimate $K_{n}$, we rewrite it as%
\[
K_{n}=G_{A_{n}}T_{A_{n}}^{E_{n}\backslash A_{n}}+P_{B_{n-1}}G_{C_{n}(B_{n-1}%
)}TP_{E_{n}\backslash C_{n}(B_{n-1})}+\sum_{S_{j}\subset\mathcal{S}_{n}%
}P_{_{S_{j}}}G_{C_{n}(S_{j})}TP_{E_{n}\backslash C_{n}(S_{j})}\text{.}%
\]
The proof exploits the fact that $\hbox{\rm dist}(B_{n-1},E_{n}\backslash
C_{n}(B_{n-1}))$ and $\hbox{\rm dist}\left(  S_{j},E_{n}\backslash C_{n}%
(S_{j})\right)  $ are greater than $\ell_{n}$, i.e. the long step $\ell_{n}$
gives the estimate in the exponential decay of the norm $\sigma_{n-1}%
-\gamma_{n}$ by $e^{-\gamma_{n}\ell_{n}}$. That is, by the first result of
Lemma \ref{Lem2} we have for $S_{j}$ that%
\[
\left\Vert P_{_{S_{j}}}G_{C_{n}(S_{j})}TP_{E_{n}\backslash C_{n}(S_{j}%
)}\right\Vert _{\sigma_{n-1}-\gamma_{n}}\leq\frac{c_{\gamma_{n}}}{d_{n}}%
r_{0}e^{-\gamma_{n}\ell_{n}}\text{,}%
\]
while for $B_{n-1}$ we have
\[
\left\Vert P_{B_{n-1}}G_{C_{n}(B_{n-1})}TP_{E_{n}\backslash C_{n}(B_{n-1}%
)}\right\Vert _{\sigma_{n-1}-\gamma_{n}}\leq\frac{c_{\gamma_{n-1}}}{d_{n-1}%
}r_{0}e^{-\gamma_{n}\ell_{n}}\text{.}%
\]
By the second result of Lemma \ref{Lem2}, we can estimate row by row the
matrix involving the singular sites $\mathcal{S}_{n}$ as
\[
\left\Vert \sum_{S_{j}\subset\mathcal{S}_{n}}P_{_{S_{j}}}G_{C_{n}(S_{j}%
)}TP_{E_{n}\backslash C_{n}(S_{j})}\right\Vert _{\sigma_{n-1}-2\gamma_{n}}%
\leq\frac{c_{\gamma_{n}}}{d_{n}}r_{0}e^{-\gamma_{n}\ell_{n}}\text{.}%
\]
Thus we conclude that
\[
\left\Vert K_{n}\right\Vert _{\sigma_{n-1}-2\gamma_{n}}\leq\frac{c}{d_{0}%
}r_{0}+\frac{c_{\gamma_{n-1}}}{d_{n-1}}r_{0}e^{-\gamma_{n}\ell_{n}}%
+\frac{c_{\gamma_{n}}}{d_{n}}r_{0}e^{-\gamma_{n}\ell_{n}}\leq\frac{3}{4}%
\]
since we have assumed that $cr_{0}/d_{0}<1/4$ and $c_{\gamma_{n}}%
r_{0}e^{-\gamma_{n}\ell_{n} }/d_{n}<1/4$.
\end{proof}

The inductive procedure of the estimate $G_{B_{n}}(w_{n-1})$ assumes the
analogous estimate for $G_{B_{n-1}}(w_{n-2})$ holds. But in the proof, the
estimate for $G_{B_{n-1}}(w_{n-1})$ is used instead. In the next lemma we
prove that this estimate is true also because $w_{n-1} - w_{n-2} = \delta
w_{n-1}$ is bounded for all $n$ in the Nash-Moser procedure.

\begin{lemma}
\label{Lem3}For any set $E_{n-1}=B_{n-1}\cup A_{n-1}$ with $A_{n-1}$ regular
we have that
\[
\left\Vert G_{E_{n-1}}(w_{n-1})\right\Vert _{\sigma_{n-1}}\lesssim
c_{\gamma_{n-1}}/d_{n-1}\text{.}%
\]

\end{lemma}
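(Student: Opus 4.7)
The plan is to view this as a perturbative statement about the inverse: the inductive hypothesis from the previous Nash–Moser step (applied at level $n-1$ via Theorem \ref{Thm4}) provides the bound
\[
\|G_{E_{n-1}}(w_{n-2})\|_{\sigma_{n-1}} \lesssim c_{\gamma_{n-1}}/d_{n-1},
\]
and the task is to transfer this bound from the previous linearization point $w_{n-2}$ to the current one $w_{n-1}$. Since $H(w) = D(\Omega) + T(w;r,\Omega)$ with only the Toeplitz part depending on $w$, the difference satisfies
\[
H(w_{n-1}) - H(w_{n-2}) = T(w_{n-1}) - T(w_{n-2}).
\]

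First I would establish a Lipschitz bound on $T$ with respect to its argument, namely
\[
\|T(w_{n-1}) - T(w_{n-2})\|_{\sigma_{n-1}} \leq C\,\|\delta w_{n-1}\|_{\sigma_{n-1}}.
\]
This follows from the algebra property of $h_{\sigma}$ together with the analyticity of $g(u,\bar u) = \bar u^2/(1+\bar u)$; writing $\partial_w g(v+w_{n-1}) - \partial_w g(v+w_{n-2})$ as an integral of $\partial_w^2 g$ against $\delta w_{n-1}$ gives the Lipschitz constant, uniformly on the ball of radius $r_0$ guaranteed by hypothesis (h1). The Nash–Moser convergence bound of the previous section then supplies $\|\delta w_{n-1}\|_{\sigma_{n-1}} \leq C r^2 e^{-\kappa^{n-1}}$.

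Next I would invoke the second resolvent identity on $E_{n-1}$: writing $H_{E_{n-1}}^{E_{n-1}}(w_{n-1}) = H_{E_{n-1}}^{E_{n-1}}(w_{n-2}) + \Delta$ with $\Delta = P_{E_{n-1}}(T(w_{n-1}) - T(w_{n-2}))P_{E_{n-1}}$, one has formally
\[
G_{E_{n-1}}(w_{n-1}) = \bigl(I + G_{E_{n-1}}(w_{n-2})\Delta\bigr)^{-1} G_{E_{n-1}}(w_{n-2}).
\]
The Neumann series converges in $\|\cdot\|_{\sigma_{n-1}}$ provided
\[
\|G_{E_{n-1}}(w_{n-2})\|_{\sigma_{n-1}}\,\|\Delta\|_{\sigma_{n-1}} \;\lesssim\; \frac{c_{\gamma_{n-1}}}{d_{n-1}}\,C r^2 e^{-\kappa^{n-1}} \;<\; \tfrac{1}{2}.
\]
Since $d_{n-1} = L_{n-1}^{-\beta}$ and $c_{\gamma_{n-1}}$ grows only polynomially in $n$ (with $L_{n-1} = 2^{n-1}L_0$), whereas $e^{-\kappa^{n-1}}$ decays super-exponentially for $\kappa > 1$, the product is bounded by $r^2$ times a quantity tending to $0$; for $r < r_0$ sufficiently small (and all $n$) the smallness condition holds. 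One then concludes $\|G_{E_{n-1}}(w_{n-1})\|_{\sigma_{n-1}} \leq 2\|G_{E_{n-1}}(w_{n-2})\|_{\sigma_{n-1}} \lesssim c_{\gamma_{n-1}}/d_{n-1}$, exactly as claimed.

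The main delicate point is the balance between the super-exponential decay of the Newton corrections $\delta w_{n-1}$ and the rapid deterioration of the inverse bound $c_{\gamma_{n-1}}/d_{n-1}$. This is precisely where the Nash–Moser scheme's choice of $d_n = L_n^{-\beta}$ with only polynomial loss matters: it ensures that the resolvent bound on $G_{E_{n-1}}(w_{n-2})$, though diverging with $n$, is always dominated by the Newton-quadratic gain. Apart from this balance, the argument is purely a perturbative resolvent estimate and does not require reanalyzing the geometry of singular clusters (which is already encoded in the inductive bound obtained via Theorem \ref{Thm4}).
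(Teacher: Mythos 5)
Your argument is essentially identical to the paper's: you invoke the inductive bound on $G_{E_{n-1}}(w_{n-2})$, establish the Lipschitz estimate on the T\"oplitz part via analyticity of $g$ and the algebra property, and conclude by the second resolvent (Neumann-series) identity, using the super-exponential decay of $\|\delta w_{n-1}\|$ against the polynomial growth of $c_{\gamma_{n-1}}/d_{n-1}$ to guarantee convergence. The only cosmetic difference is that you write the resolvent factorization as $(I+G\Delta)^{-1}G$ while the paper writes $G(I+GR)^{-1}$ (which as written has a minor misprint), but the substance and all estimates are the same.
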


\begin{proof}
We assume from the previous step that
\[
\left\Vert G_{E_{n-1}}(w_{n-2})\right\Vert _{\sigma_{n-1}}\lesssim
c_{\gamma_{n-1}}/d_{n-1}\text{.}%
\]
The difference of the Hamiltonians is defined as%
\begin{align*}
R_{n-1} &  :=H_{E_{n-1}}(w_{n-1})-H_{E_{n-1}}(w_{n-2})\\
&  =T_{E_{n-1}}(w_{n-1})-T_{E_{n-1}}(w_{n-2}).
\end{align*}
From Taylor's theorem, since $\delta w_{n-1}=w_{n-1}-w_{n-2}$, we have%
\[
R_{n-1}=T_{E_{n-1}}^{\prime}(w_{n-2})[\theta(\delta w_{n-1})].
\]
Since $\left\Vert T^{\prime}(w_{n-2})\right\Vert _{\sigma_{n-1}}%
\lesssim\left\Vert w_{n-2}\right\Vert _{\sigma_{n-1}}\left\vert v\right\vert
^{2}$ with $w_{n-2}$ bounded for all $n$ and $\left\vert v\right\vert \leq
r_{0}$, then
\[
\left\Vert R_{n-1}\right\Vert _{\sigma_{n-1}}\lesssim\left\Vert \delta
w_{n-1}\right\Vert _{\sigma_{n-1}}\lesssim\varepsilon_{n-1}.
\]
By the inductive hypothesis, we have that
\[
\left\Vert G_{E_{n-1}}(w_{n-2})R_{n-1}\right\Vert _{\sigma_{n-1}}%
\lesssim\varepsilon_{n-1}/d_{n-1}\leq1/2\text{,}%
\]
the result follows from the fact that
\[
G_{E_{n-1}}(w_{n-1})=G_{E_{n-1}}(w_{n-2})(I_{E_{n-1}}+G_{E_{n-1}}%
(w_{n-2})R_{n-1})^{-1}\text{.}%
\]

\end{proof}

\section{Verification of hypotheses}

In this section we prove the exponential decay of the T\"oplitz matrix $T$,
and we discuss the separation property of the singular regions. Then, we prove
the estimate of the spectrum of the Hamiltonians in the singular regions for
good parameters $(r,\Omega)$ in a subset $\mathcal{N}_{n}$.

\subsection{(h1) Exponential decay}

\begin{lemma}
If $\left\Vert u\right\Vert _{\sigma}<r_{0}$, then $\left\Vert T(u)\right\Vert
_{\sigma}<Cr_{0}.$
\end{lemma}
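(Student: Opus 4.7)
The plan is to realize $T(u)$, in the orthonormal basis $\{e_x\}$, as the matrix of multiplication by a symbol $h$ that is analytic in $h_\sigma$ and vanishes to first order at $0$, and then transfer the algebra estimate on $h_\sigma$ to an operator norm bound.

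First I would identify the symbol. Because $g(u,\bar u)=\sum_{n\ge 2}(-1)^n\bar u^n$, the linearization $h:=\omega\,\partial_w g(v(r)+u)$ is a power series in $v(r)+u$ and its conjugate starting at first order. Invoking the algebra property of $h_\sigma$, and proceeding exactly as in the proof of the $g$-bound lemma just above, for $\|u\|_\sigma<r_0$ (and $|v(r)|\le r$ in the symmetry-reduced kernel direction) one obtains
\[
\|h\|_\sigma \le \sum_{n\ge 1}c_{\sigma,s}^{\,n}\,\|v(r)+u\|_\sigma^{\,n} \lesssim \|v(r)+u\|_\sigma \lesssim r_0 .
\]

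Second I would pass from this bound on the symbol to one on the operator. In the symmetric basis, products such as $\cos jt\cos j't$ and $\cos ks\cos k's$ expand into sums over $j\pm j'$ and $k\pm k'$, so each entry $T(u)(x,y)=\langle h\,e_y,e_x\rangle$ is a finite linear combination of Fourier coefficients of $h$ at lattice points close to $x-y$, up to the sign reflections encoded in $L^2_{sym}$. The equivalence of $\|\cdot\|_\sigma$ with the standard weighted Fourier norm \eqref{n} used in the proof of the algebra lemma then reduces both rows and columns of $T(u)$ to a Cauchy--Schwarz estimate against a summable weight $\langle z\rangle^{-2(s'-s)}$, and this controls both suprema in the definition of $\|T(u)\|_\sigma$ by $C\|h\|_\sigma$, yielding $\|T(u)\|_\sigma \lesssim r_0$.

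The only real obstacle is this last passage from the $\ell^2$-type algebra norm on $h$ to the $\ell^1$-type operator norm on $T(u)$. It requires the Sobolev index $s$ to be taken slightly larger than what is needed for the algebra property itself, but is otherwise the same bookkeeping already used in \cite{CrWa94}, and presents no new analytical difficulty.
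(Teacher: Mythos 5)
Your proposal is correct and follows the same path as the paper: bound the symbol $h = \omega\,\partial_w g$ in $h_\sigma$ via the algebra property, and then express $T(x,y) = \langle h e_y, e_x\rangle$ in terms of a bounded number of Fourier coefficients of $h$ at modes $z$ with $|z| \geq |x-y|$, exploiting that products of $\cos jt$ and $\cos ks$ in $L^2_{sym}$ only populate sum and difference frequencies. You are right that passing from this entry-wise decay to the operator norm $\|\cdot\|_\sigma$ costs a slight loss in the Sobolev index, handled by Cauchy--Schwarz against a summable weight; the paper in fact stops at the entry-wise bound and leaves that final summation implicit, so your remark fills in a small step rather than diverging from the argument.
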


\begin{proof}
Let $u$ be a function with $\left\Vert u\right\Vert _{\sigma}<r_{0}$, by the
algebra property of the norm, we have that the function
\[
h\left(  u\right)  =\omega\underset{n=2}{\overset{\infty}{%
{\displaystyle\sum}
}}n\left(  -1\right)  ^{n}\bar{u}^{n-1}%
\]
satisfy $\left\Vert h\right\Vert _{\sigma}<Cr_{0}$. Then by definition
\[
\left\vert \left\langle h,e_{j,k,l}\right\rangle \right\vert <Cr_{0}%
e^{-\sigma|(j,k)|}\left\langle j,k\right\rangle ^{-s}.
\]

Let $x_{n}=\left(  j_{n},k_{n},l_{n}\right)  \in\Lambda$, then%
\[
T\left(  x_{1},x_{2}\right)  =\left\langle he_{x_{2}},e_{x_{1}}\right\rangle =%
{\displaystyle\sum\limits_{x_{3}\in\Lambda}}
\left\langle h,e_{x_{3}}\right\rangle \left\langle e_{x_{2}}e_{x_{3}}%
,e_{x_{1}}\right\rangle
\]
Since $\left\langle e_{x_{2}}e_{x_{3}},e_{x_{1}}\right\rangle =0$ when
$j_{3}\notin\{\pm j_{1}\pm j_{2}\}$ or $k_{3}\notin\{\pm k_{1}\pm k_{2}\}$,
and since $\left\vert \left\langle e_{x_{2}}e_{x_{3}},e_{x_{1}}\right\rangle
\right\vert \leq1$, then%
\begin{align*}
\left\vert T\left(  x_{1},x_{2}\right)  \right\vert  &  \leq\sum_{l_{3}%
=\pm1,~j_{3}\in\{\pm j_{1}\pm j_{2}\},~k_{3}\in\{\pm k_{1}\pm k_{2}%
\}}\left\vert \left\langle h,e_{j_{3},k_{3},l_{3}}\right\rangle \right\vert \\
&  \leq2\sum_{j_{3}\in\{\pm j_{1}\pm j_{2}\},~k_{3}\in\{\pm k_{1}\pm k_{2}%
\}}Cr_{0}e^{-\sigma|(j_{3},k_{3})|}\left\langle j_{3},k_{3}\right\rangle
^{-s}\text{.}%
\end{align*}
Since there are four elements in the sum, and they satisfy $j_{3}%
\geq\left\vert j_{1}-j_{2}\right\vert $ and $k_{3}\geq\left\vert k_{1}%
-k_{2}\right\vert $, then
\[
\left\vert T\left(  x_{1},x_{2}\right)  \right\vert \leq8Cr_{0}e^{-\sigma
|(\left\vert j_{1}-j_{2}\right\vert ,\left\vert k_{1}-k_{2}\right\vert
)|}\left\langle \left\vert j_{1}-j_{2}\right\vert ,\left\vert k_{1}%
-k_{2}\right\vert \right\rangle ^{-s}\text{.}%
\]

\end{proof}

\subsection{(h2) Separation property}

We say that $(j,k,l)\in\Lambda$ is a singular site if $\left\vert
\lambda_{j,k,l}(\Omega)\right\vert \leq d_{0}$.

\begin{lemma}
Let $(j_{1},k_{1})$ and $(j_{2},k_{2})$ be two different singular sites, then
for a sufficiently small $d_{0}$, we have that
\[
\left\vert j_{1}-j_{2}\right\vert \geq C\left\vert k_{1}+k_{2}\right\vert
\text{,}%
\]
where $C$ is a constant that only depends on $\Omega$ and $\omega$.
Furthermore, the constant is uniform in $(j,k,l) \in\Lambda$ for
$(\Omega,\omega)$ in neighborhood of $(\Omega_{0},\omega_{0})$.
\end{lemma}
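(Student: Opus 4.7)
The plan is to exploit the fact that the singular condition $|\lambda_{j,k,-1}(\Omega)|\leq d_0$ confines $(j,k)$ to a narrow neighborhood of the smooth curve $j\Omega=g(k)$, where $g(k):=k\sqrt{k^2+2\omega}$, and to show that this curve is curved enough that any two integer lattice points lying near it must differ substantially in both coordinates.

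First I would reduce to the case $l=-1$ and $k\geq 1$: for $d_0<2\omega$ the eigenvalue $\lambda_{j,k,+1}\geq 2\omega$ is never singular, and for $k=0$ one has $|\lambda_{j,0,-1}|=\sqrt{j^2\Omega^2+\omega^2}-\omega\geq\sqrt{\Omega^2+\omega^2}-\omega>0$, which is bounded below uniformly. Next, using the factorization
\[
\lambda_{j,k,1}\,\lambda_{j,k,-1}=d_{j,k}(\Omega)=(g(k)-j\Omega)(g(k)+j\Omega),
\]
together with the elementary bound $\lambda_{j,k,1}/(g(k)+j\Omega)\leq C_{\Omega,\omega}$ valid for $k\geq 1$, I would convert the singular condition to the linearized form
\[
|j\Omega-g(k)|\leq C_{\Omega,\omega}\,d_0.
\]

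For two distinct singular sites $(j_i,k_i)$ the triangle inequality gives $|(j_1-j_2)\Omega-(g(k_1)-g(k_2))|\leq 2C_{\Omega,\omega}d_0$. The case $k_1=k_2$ is ruled out immediately: it forces $|j_1-j_2|\Omega\leq 2C_{\Omega,\omega}d_0$ and hence $j_1=j_2$ once $d_0<\Omega/(4C_{\Omega,\omega})$, contradicting distinctness. Thus $|k_1-k_2|\geq 1$, and in this substantive case I would use the algebraic identity
\[
g(k_1)^2-g(k_2)^2=(k_1-k_2)(k_1+k_2)(k_1^2+k_2^2+2\omega)
\]
together with the trivial upper bound $g(k_1)+g(k_2)\leq(k_1+k_2)\sqrt{k_1^2+k_2^2+2\omega}$ to derive
\[
|g(k_1)-g(k_2)|\geq|k_1-k_2|\sqrt{k_1^2+k_2^2+2\omega}\geq\tfrac{1}{\sqrt{2}}|k_1-k_2|(k_1+k_2).
\]
Since $|k_1-k_2|\geq 1$ and $k_1+k_2\geq 1$, the $O(d_0)$ perturbation from the earlier step is absorbed once $d_0$ is small, yielding $|j_1-j_2|\geq C(k_1+k_2)$ with $C=(2\sqrt{2}\,\Omega)^{-1}$.

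The main technical point is the uniform passage from the product form $|\lambda_{j,k,-1}|\leq d_0$ to the linearized form $|j\Omega-g(k)|\leq C_{\Omega,\omega}d_0$: one must control the ratio $\lambda_{j,k,1}/(g(k)+j\Omega)$ by a constant independent of $(j,k)\in\Lambda$, which is precisely where the observation that every singular site has $k\geq 1$ becomes essential. Once this conversion is in place, uniformity of $C$ in a neighborhood of $(\Omega_0,\omega_0)$ is automatic, because every bound in the argument depends continuously on $(\Omega,\omega)$.
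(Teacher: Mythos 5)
Your proof is correct, and it arrives at the same conclusion as the paper by a mildly different route. The paper works directly with the eigenvalue $\lambda_{j,k,-1}=k^{2}+\omega-\sqrt{j^{2}\Omega^{2}+\omega^{2}}$: it subtracts the two singular conditions and uses that $\sqrt{j^{2}\Omega^{2}+\omega^{2}}$ is Lipschitz in $j$ with constant $\lesssim\Omega$, which yields $|k_{1}^{2}-k_{2}^{2}|-C\Omega|j_{1}-j_{2}|\leq 2d_{0}$ and then $|j_1-j_2|\gtrsim |k_1+k_2|$ after using $|k_1-k_2|\geq 1$. You instead go through the determinant factorization $\lambda_{j,k,1}\lambda_{j,k,-1}=(g(k)-j\Omega)(g(k)+j\Omega)$ with $g(k)=k\sqrt{k^2+2\omega}$, establish the uniform ratio bound $\lambda_{j,k,1}/(g(k)+j\Omega)\leq 1+2\omega$ for $k\geq 1$, and thereby linearize the singularity condition to $|j\Omega-g(k)|\leq C_{\Omega,\omega}d_{0}$ before estimating $|g(k_1)-g(k_2)|$ via the identity $g(k_1)^2-g(k_2)^2=(k_1-k_2)(k_1+k_2)(k_1^2+k_2^2+2\omega)$. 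Both yield the lemma; your version makes the geometric picture (confinement to a tube about the curve $j\Omega=g(k)$) more explicit, gives a slightly sharper intermediate bound $|j_1-j_2|\gtrsim |k_1-k_2|\sqrt{k_1^2+k_2^2}$, and your treatment of the $k_1=k_2$ case (both $j$'s pinned near $g(k)/\Omega$, hence equal for $d_0$ small) fixes a step that is stated somewhat loosely in the paper, whose displayed inequality does not by itself force $j_1=j_2$ when $k_1=k_2$. Your explicit handling of $k=0$ sites is also a useful observation that the paper leaves implicit.
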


\begin{proof}
The sites of the form $(j,k,1)$ are never singular if $d_{0}<<1$. Given that
$\lambda_{j,k,-1}=k^{2}+\omega-\sqrt{(j\Omega)^{2}+\omega^{2}}$, then
\[
\left\vert k_{1}^{2}-k_{2}^{2}\right\vert -C\Omega\left\vert j_{1}%
-j_{2}\right\vert \leq\left\vert \lambda_{j_{1},k_{1},-1}-\lambda_{j_{2}%
,k_{2},-1}\right\vert \leq2d_{0}.
\]
If $k_{1}=k_{2}$, taking $d_{0}$ small enough such that $d_{0}\leq C\Omega/2$,
then $j_{1}=j_{2}$. Finally, if $k_{1}\neq k_{2}$, there exists a constant $c$
such that
\[
\left\vert j_{1}-j_{2}\right\vert \geq\frac{1}{C\Omega}\left(  \left\vert
k_{1}^{2}-k_{2}^{2}\right\vert -2d_{0}\right)  \geq c\left\vert k_{1}%
+k_{2}\right\vert .
\]

Now let $S=\{(j_{0},k_{0},-1)\}$ be a singular site in the annulus
$B_{n+1}/B_{n}$. By the previous inequality, the neighborhood
\[
C_{n}(S)=\{(j,k,l):\left\vert (j,k)-(j_{0},k_{0})\right\vert <\ell_{n}\}
\]
contains only one singular site for $\ell_{n}=CL_{n}^{1/2}$.
\end{proof}

\subsection{(h3) Good parameters}

We analyze the spectrum of the local Hamiltonians%
\[
H_{C(S)}(w_{n};r,\Omega)=P_{C(S)}(D(\Omega)+T(v(r)+w_{n}(r,\Omega
);\Omega))P_{C(S)}\text{.}%
\]

For $\left\vert r\right\vert \leq r_{0}\ $, there is only one eigenvalue of
$H_{C(S)}(w_{n},\Omega)$ with norm less than $d_{0}/2$ if $r_{0}<<d_{0}/2$.
Let $e(r,\Omega)$ be that eigenvalue of $H_{C(S)}$ with
\[
\left\vert e(r,\Omega)\right\vert <d_{n}<<d_{0}/2,
\]
then $e(r,\Omega)$ is isolated from other eigenvalues, and as such it is
analytic in the parameters.

\begin{lemma}
For $\left\vert r\right\vert <r_{0}$, there exists a constant $C>0$ such that
\[
\partial_{\Omega}e(r,\Omega)\leq-CL_{n}\text{.}%
\]

\end{lemma}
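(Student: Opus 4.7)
The plan is to apply the Hellmann--Feynman formula. Since the eigenvalue $e(r,\Omega)$ is isolated from the rest of the spectrum by a gap of order $d_0 \gg d_n$, it is analytic in $\Omega$ with an analytic normalized eigenvector $\psi(r,\Omega)$, so
$$\partial_\Omega e = \langle \psi, (\partial_\Omega H_{C(S)}) \psi\rangle = \langle \psi, (\partial_\Omega D) \psi\rangle + \langle \psi, (\partial_\Omega T) \psi\rangle.$$
The plan is to extract the large contribution of size $L_n$ from the diagonal part at the singular site $x_0=(j_0,k_0,-1)$ and check that the remaining pieces are lower order.

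First I would prove that $\psi$ is strongly concentrated at the unique singular site $x_0\in C(S)$ provided by hypothesis (h2). Writing $\psi = a\,e_{x_0} + \phi$ with $\phi\perp e_{x_0}$, projecting the eigenvalue equation $H_{C(S)}\psi = e\psi$ onto the orthogonal complement of $e_{x_0}$, and observing that $D|_{C(S)\setminus\{x_0\}} - e\,I$ is invertible with norm $\leq 2/d_0$ (all remaining sites in $C(S)$ are regular and $|e|\leq d_n\ll d_0$), the bound $\|T\|\leq Cr_0$ from (h1) yields $\|\phi\|\leq Cr_0/d_0$ and consequently $|a|^2 \geq 1-Cr_0^2/d_0^2$.

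Second, I would compute $\partial_\Omega\lambda_{x_0}$ directly. From $\lambda_{j,k,-1}=k^2+\omega-\sqrt{j^2\Omega^2+\omega^2}$,
$$\partial_\Omega\lambda_{j,k,-1}(\Omega)=-\frac{j^2\Omega}{\sqrt{j^2\Omega^2+\omega^2}}.$$
The singular condition $\lambda_{x_0}=O(d_0)$ gives $\sqrt{j_0^2\Omega^2+\omega^2}=k_0^2+\omega+O(d_0)$, i.e.\ $j_0\Omega = k_0\sqrt{k_0^2+2\omega}+O(d_0)$. Combined with the annulus constraint $L_n \leq |j_0|+|k_0| < 2L_n$, this relation forces $|j_0|\gtrsim L_n$ (large $|k_0|$ would inflate $j_0\sim k_0^2$ past $L_{n+1}$, so $|k_0|\lesssim L_n^{1/2}$ and hence $|j_0|\sim L_n$). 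Substituting yields $\partial_\Omega\lambda_{x_0}\leq -CL_n$ with a uniform constant.

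Finally I would control the error terms. The contribution of $\phi$ to $\langle\psi,(\partial_\Omega D)\psi\rangle$ is bounded by $\|\phi\|^2\cdot\max_{x\in C(S)}|\partial_\Omega\lambda_x|\leq CL_n r_0^2/d_0^2$, and for $\langle\psi,(\partial_\Omega T)\psi\rangle$ the $\Omega$-derivative of $T$ involves $\partial_\Omega w_n$ (controlled by the Nash--Moser scheme) and the $\Omega$-dependence of the basis $\{e_x\}$ (whose $\Omega$-derivatives are uniformly bounded in $(j,k,l)$, visible from the explicit formulas for $v_{j,k,\pm 1}$ and $c_{j,\pm 1}$), giving an $O(r_0)$ bound. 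Combining produces $\partial_\Omega e \leq -CL_n + O(L_n r_0^2/d_0^2) + O(r_0) \leq -\tfrac12 CL_n$ for $r_0$ small. The main obstacle is the $\phi$ contribution to $\langle\psi,(\partial_\Omega D)\psi\rangle$: since the neighboring diagonal entries $\partial_\Omega\lambda_{j,k,l}$ can themselves be comparable to $L_n$, one must have a genuinely quadratic smallness $\|\phi\|^2\lesssim r_0^2/d_0^2$ in order for this error to sit strictly below the main term, and this relies on the hypothesis $r_0\ll d_0$ built into the Nash--Moser setup.
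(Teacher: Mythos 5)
Your proof is correct but takes a genuinely different route from the paper. The paper simply invokes analyticity and a Taylor expansion in the amplitude,
\[
\partial_{\Omega}e(r,\Omega)=\partial_{\Omega}e(0,\Omega)+O(r)=\partial_{\Omega}\lambda_{j,k,-1}(\Omega)+O(r)\text{,}
\]
and then reads off $\partial_\Omega\lambda_{j,k,-1}=-j^2\Omega/\sqrt{j^2\Omega^2+\omega^2}\lesssim-L_n$ from $|j|\gtrsim L_n$. You instead apply Hellmann--Feynman directly at parameter $r$ and split $\psi=a\,e_{x_0}+\phi$, proving the concentration estimate $\|\phi\|\lesssim r_0/d_0$ by inverting $D-eI$ off the singular site. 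The virtue of your version is that it makes the size of the correction term fully explicit: the diagonal piece contributes $a^2\partial_\Omega\lambda_{x_0}+O(\|\phi\|^2L_n)$ and the T\"oplitz piece contributes $O(r_0)$, and you correctly flag that the neighboring diagonal entries $\partial_\Omega\lambda_x$ are themselves of size $L_n$, so the quadratic smallness $\|\phi\|^2\lesssim r_0^2/d_0^2$ (hence $r_0\ll d_0$) is essential. The paper's $O(r)$, if unpacked via first-order perturbation theory, relies on the quiet cancellation $\langle\partial_r\psi,e_{x_0}\rangle|_{r=0}=0$ coming from normalization (and reality of the problem) to suppress a potential $O(L_n/d_0)$ cross term; your concentration argument sidesteps that cancellation and is therefore somewhat more robust, at the cost of some extra bookkeeping. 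Two small points: the annulus $B_n\setminus B_{n-1}$ gives $L_n/2\le|j_0|+|k_0|<L_n$ rather than $L_n\le|j_0|+|k_0|<2L_n$, which does not affect your conclusion; and your appeal to control of $\partial_\Omega w_n$ via the Nash--Moser scheme is legitimate but should be phrased as the inductive Whitney regularity of $w_{n-1}$ available at the $n$th step, to avoid any appearance of circularity with Section~5.
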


\begin{proof}
Since $e(r,\Omega)$ is analytic, then
\[
\partial_{\Omega}e(r,\Omega)=\partial_{\Omega}e(0,\Omega)+O(r)=\partial
_{\Omega}\lambda_{j,k,-1}+O(r)\text{.}%
\]
By an explicitly calculation
\[
\partial_{\Omega}\lambda_{j,k,-1}=\frac{-j^{2}\Omega}{\sqrt{j^{2}\Omega
^{2}+\omega^{2}}}\geq-C\Omega L_{n}\text{,}%
\]
because the site $(j,k)$ is singular with $\left\vert j\right\vert \geq
CL_{n}$.

From the above lemma, for a fix $r<r_{0}$, the eigenvalue $e(r,\Omega)$ is a
monotone decreasing function of $\Omega$. Since $e(r,\Omega)$ is analytic,
then there is an unique analytic function $\Omega_{z}(r)$ such that
$e(r,\Omega_{z}(r))=0$. Since $e(0,\Omega)=\lambda_{j,k,-1}(\Omega)$ for
$r=0$, then
\[
\Omega_{z}(0)=\Omega_{j,k}=\frac{1}{j}\sqrt{k^{4}+2k^{2}\omega}%
\]
and $(j,k,-1)\in S_{j}$.
\end{proof}

\begin{lemma}
We have that%
\[
\Omega_{z}(r)=\Omega_{j,k}+\frac{1}{L_{n}}O(r^{2})\text{.}%
\]

\end{lemma}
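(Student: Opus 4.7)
The plan is to apply the implicit function theorem to the equation $e(r, \Omega_z(r)) = 0$ and Taylor-expand in $r$ around $r = 0$. By the preceding lemma, $|\partial_\Omega e(r, \Omega)| \gtrsim L_n$ uniformly in the relevant region, so
\[
\Omega_z'(r) = -\frac{\partial_r e(r, \Omega_z(r))}{\partial_\Omega e(r, \Omega_z(r))}
\]
carries a factor of $1/L_n$. It therefore suffices to show that $\partial_r e(0, \Omega_{j,k}) = 0$; once this is established, analyticity of $e$ gives $\partial_r e(r, \Omega_z(r)) = O(r)$ along the curve, and integrating produces the desired bound $\Omega_z(r) - \Omega_{j,k} = O(r^2)/L_n$.

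To prove the vanishing of the linear coefficient, I would first observe that $w_n(0, \Omega) = 0$, since $u = 0$ trivially solves $P_{B_n} f(u; \Omega) = 0$. Consequently $H_{C(S)}(w_n; 0, \Omega) = D(\Omega)|_{C(S)}$, and the normalized eigenvector of its isolated simple eigenvalue that vanishes at $\Omega = \Omega_{j,k}$ is simply $e_{j,k,-1}$. Implicit differentiation in $r$ of the range equation $P_{B_n} f(v(r) + w_n; \Omega) = 0$ at $r = 0$, combined with $T(0) = 0$ (the nonlinearity $g$ has no linear part in $u$, $\bar u$) and $P_{B_n} L(\Omega) e_{1,1,-1} = 0$ (the mode $e_{1,1,-1}$ sits in the kernel $N$, which $P_{B_n}$ excises), yields
\[
P_{B_n} L(\Omega)\, \partial_r w_n(0, \Omega) = 0,
\]
and invertibility of $L(\Omega)$ on $P_{B_n}$ forces $\partial_r w_n(0, \Omega) = 0$. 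Thus $\partial_r(v + w_n)|_{r=0} = e_{1,1,-1}$, and Hermitian first-order perturbation theory gives
\[
\partial_r e(0, \Omega_{j,k}) = \bigl\langle (\partial_r H_{C(S)})|_{r=0}\, e_{j,k,-1},\ e_{j,k,-1}\bigr\rangle.
\]

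This matrix element vanishes by a Fourier parity argument. Since $g = \bar u^2 + O(\bar u^3)$, the operator $(\partial_r H)|_{r=0}$ is a real-linear multiplication operator whose coefficient is proportional to $\bar e_{1,1,-1}$, so the inner product reduces to an integral over $\mathbb{T}^2$ of $\bar e_{1,1,-1}$ against a quadratic combination of $e_{j,k,-1}$ and $\bar e_{j,k,-1}$. The factor $\bar e_{1,1,-1}$ is supported on the Fourier modes $\{(\pm 1, \pm 1)\}$, whereas every quadratic monomial in $e_{j,k,-1}, \bar e_{j,k,-1}$ is supported on the even lattice
\[
\{(0,0),\ (\pm 2j, 0),\ (0, \pm 2k),\ (\pm 2j, \pm 2k)\};
\]
since $2j, 2k \neq \pm 1$ for integer $(j,k)$, the product has vanishing $(0,0)$-mode and the integral is zero.

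With $\partial_r e(0, \Omega_{j,k}) = 0$ in hand, the second Taylor coefficient $\Omega_z''(0) = -\partial_r^2 e / \partial_\Omega e$ is $O(1/L_n)$ (numerator $O(1)$ by standard second-order perturbation theory using that the remaining eigenvalues in $C(S)$ are bounded away from zero by $d_0$; denominator $\sim L_n$), and the same $1/\partial_\Omega e$ factor in the implicit function formulas persists for higher coefficients, yielding $\Omega_z(r) = \Omega_{j,k} + \frac{1}{L_n} O(r^2)$. The main obstacle is the careful bookkeeping of the real-linear (not complex-linear) structure of $T$ that arises because $g$ depends only on $\bar u$; once this is handled, the Fourier parity cleanly kills the linear term and the $1/L_n$ scaling emerges directly from the monotonicity estimate $|\partial_\Omega e| \gtrsim L_n$.
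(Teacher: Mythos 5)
Your proposal is correct and follows essentially the same route as the paper: Taylor-expand $e(r,\Omega)$ near $(0,\Omega_{j,k})$, use first-order perturbation theory (the Feynman--Hellmann formula in the paper's phrasing) to show $\partial_r e(0,\Omega_{j,k})=0$ via the Fourier-support mismatch between $\bar e_{1,1,-1}$ and quadratic monomials in $e_{j,k,-1}$, and then divide by $\partial_\Omega e\gtrsim L_n$ to extract the $1/L_n$ factor. Your write-up is somewhat more explicit than the paper on two implicit points — that $\partial_r w_n(0,\Omega)=0$ so $\partial_r(v+w_n)|_{r=0}=e_{1,1,-1}$, and the concrete Fourier parity count behind the vanishing — but these are filling in the same argument, not a different one.
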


\begin{proof}
Since $e(r,\Omega)$ is analytic then%
\[
e(r,\Omega)=e(0,\Omega_{j,k})+\partial_{\Omega}e(0,\Omega_{j,k})(\Omega
-\Omega_{j,k})+\partial_{r}e(0,\Omega_{j,k})r+h.o.t
\]
By the Feynman-Hellman formula, we have that%
\[
\partial_{r}e(0,\Omega_{j,k})=\langle T^{\prime}(0)[\partial_{r}v,\psi
_{0}],\psi_{0}\rangle\text{.}%
\]
In the space $L_{sym}^{2}$, the functions are $\psi_{0}=e_{j,k,-1}$,
$\partial_{r}v=e_{1,1,-1}$, and
\[
d^{2}g(0)[w_{1},w_{2}]=\partial_{\bar{u}}^{2}g(0)\bar{w}_{1}\bar{w}_{2}%
=2\bar{w}_{1}\bar{w}_{2}.
\]
Thus, for any $j,k\neq1/2$, we have that
\[
\langle\psi_{0},T^{\prime}(0)[\partial_{r}v,\psi_{0}]\rangle=\langle
2\omega\bar{e}_{1,1,-1}\bar{e}_{j,k,-1},e_{j,k,-1}\rangle=0.
\]

Since $\partial_{\Omega}e(0,\Omega_{j,k})<-CL_{n}$, then
\[
\partial_{\Omega}e(0,\Omega_{j,k})(\Omega_{z}-\Omega_{j,k})+h.o.t.=0
\]
Using the implicit function theorem, we have that $\Omega_{z}-\Omega_{j,k}$ is
a function of $r$, and%
\[
\Omega_{z}-\Omega_{j,k}=\frac{1}{L_{n}}O(r^{2})\text{.}%
\]

\end{proof}

Let $N_{j,k}$ be the neighborhood of the curve $\Omega_{z}(r)$ given by
\begin{equation}
N_{j,k}=\{(r,\Omega):\left\vert \Omega_{z}(r)-\Omega\right\vert <C\frac{d_{n}%
}{L_{n}}\}\text{,}%
\end{equation}
by the previous lemma, and the mean value theorem, the eigenvalue satisfy
$\left\vert e(r,\Omega)\right\vert >d_{n}$ if $(r,\Omega)\notin N_{j,k}$ .
Thus, the hypothesis (h3) holds true in the complement of the set of
parameters
\[
\mathcal{\cup}_{(j,k)\in\mathcal{S}_{n}}N_{j,k}\text{,}%
\]
where the union is taken over all singular sites in the annulus $B_{n}%
\backslash B_{n-1}$.

\section{Intersection property}

In this section, we present the arguments for the Whitney regularity of
$w(r,\Omega)$ for $(r,\Omega)\in\mathcal{N=}\cap_{n=1}^{\infty}\mathcal{N}%
_{n}$. Then, we prove that the intersection of the curve
\[
\mathcal{C=}\{(r,\Omega(r)):\Omega(r)=\Omega_{0}+\Omega_{2}r^{2}%
+O(r^{3})\mathcal{\}}\text{,}%
\]
and the Cantor set $\mathcal{N}$ has positive measure in the case $\Omega
_{2}\neq0$. Finally, we prove that the curve $\mathcal{C}$ corresponding to
the bifurcation of standing waves has the non-degeneracy property $\Omega
_{2}\neq0$.

\subsection{Whitney regularity}

At the $n^{th}$ Nash-Moser step excisions are made in parameter space
$(r,\Omega)\in\mathcal{N}_{n-1}$ consisting of the union of neighborhoods
$N_{j,k}$, each of width $Cd_{n}/L_{n}$. On the parameter set $\mathcal{N}%
_{n-1}\backslash\cup_{j,k}N_{j,k}$, after the excision of the $N_{j,k}$, we
solve \eqref{Eqn:ApproxNewtonIteration} for the correction $\delta
w_{n}(r,\Omega)$. We may now provide a smooth interpolant for $\delta
w_{n}(r,\Omega)$ across the excisions, in the usual way. Construct a cutoff
function $\varphi_{n}(r,\Omega)\in C_{0}^{\infty}$, which is supported in
$\mathcal{N}_{n-1}\backslash\cup_{j,k}N_{j,k}$ and for which $\varphi
_{n}(r,\Omega)=1$ on the new parameter set $\mathcal{N}_{n}:=\mathcal{N}%
_{n-1}\backslash\cup_{j,k}2N_{j,k}$, where
\[
2N_{j,k}:=\{(r,\Omega):\left\vert \Omega_{z}(r)-\Omega\right\vert
<c\frac{2d_{n}}{L_{n}}\}\text{{}}%
\]
are excisions of just twice the width of the previous $N_{j,k}$. This can be
done so that the cutoff function $\varphi_{n}$ has derivatives bounded by
$|\partial_{\Omega}^{\alpha}\partial_{r}^{\beta}\varphi_{n}|\leq C(\frac
{L_{n}}{2d_{n}})^{\alpha+\beta}$. Then $\varphi_{n}\delta w_{n}(r,\Omega)\in
C^{\infty}(\mathcal{N}_{0})$ and $\varphi_{n}\delta w_{n}=\delta w_{n}$ on
$\mathcal{N}_{n}$.
Now $w_{n}=w_{n-1}+\varphi_{n}\delta w_{n}$ is $C^{\infty}$ in the set of
parameters $(r,\Omega)\in\mathcal{N}_{0}$, $w_{n}=w_{n-1}+\delta w_{n}$ on
$\mathcal{N}_{n}$, and moreover, for $(r,\Omega)\in\mathcal{N}_{0}$ the
sequence $w_{n}$ converges in $h_{\sigma_{0}/2}$ along with all of its
derivatives with respect to $(r,\Omega)$; and the following estimate holds%
\[
\left\Vert \partial_{\Omega}^{\alpha}w\right\Vert _{\sigma_{0}/2}\leq
Cr^{2},\qquad\left\Vert \partial_{r}\partial_{\Omega}^{\alpha}w\right\Vert
_{\sigma_{0}/2}\leq Cr\text{.}%
\]

\subsection{Measure of good parameters}

\begin{lemma}
Let $r_{-}$ and $r_{+}$ be the minimum and the maximum of $\{r:(r,\Omega
)\in\mathcal{C}\cap N_{j,k}\}$, we have that
\[
\left\vert r_{-}^{2}-r_{+}^{2}\right\vert \leq\frac{C}{\Omega_{2}}\frac{d_{n}%
}{L_{n}}\text{.}%
\]

\end{lemma}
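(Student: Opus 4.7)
The plan is to view the difference $\Omega(r)-\Omega_{z}(r)$ as a smooth function of the squared amplitude $s=r^{2}$ and apply the mean value theorem, using that its derivative in $s$ is, thanks to the nondegeneracy $\Omega_{2}\neq 0$, bounded away from zero.

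First I would use continuity of $\Omega$ and $\Omega_{z}$ to extend the defining strict inequality of $N_{j,k}$ to the extremal points $r_{\pm}$, yielding $|\Omega(r_{\pm})-\Omega_{z}(r_{\pm})|\leq C d_{n}/L_{n}$, and hence
\[
\bigl|[\Omega(r_{+})-\Omega_{z}(r_{+})]-[\Omega(r_{-})-\Omega_{z}(r_{-})]\bigr|\leq 2C\,d_{n}/L_{n}.
\]
This is already of the right order in $d_{n}/L_{n}$; what remains is to convert a bound on this $\Omega$-variation into one on $r_{+}^{2}-r_{-}^{2}$.

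Next, combining the expansion $\Omega(r)=\Omega_{0}+\Omega_{2}r^{2}+O(r^{3})$ from the definition of the bifurcation curve $\mathcal{C}$ with the previous lemma's expansion $\Omega_{z}(r)=\Omega_{j,k}+L_{n}^{-1}O(r^{2})$, the function $\Phi(s):=\Omega(\sqrt{s})-\Omega_{z}(\sqrt{s})$ is smooth on $[0,r_{0}^{2}]$ with derivative
\[
\Phi'(s)=\Omega_{2}+O(\sqrt{s})+L_{n}^{-1}O(1).
\]
For $r_{0}$ sufficiently small and $L_{n}$ sufficiently large, the corrections are dominated, so $|\Phi'(s)|\geq |\Omega_{2}|/2$ on the whole interval. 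The mean value theorem applied on $[r_{-}^{2},r_{+}^{2}]$ then gives
\[
|r_{+}^{2}-r_{-}^{2}|\leq\frac{|\Phi(r_{+}^{2})-\Phi(r_{-}^{2})|}{\min|\Phi'|}\leq\frac{4C}{|\Omega_{2}|}\cdot\frac{d_{n}}{L_{n}},
\]
which is the claimed bound.

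The argument at this step is essentially bookkeeping and I do not anticipate a serious obstacle here. The genuine substance lies in the subsequent subsection where the bifurcation nondegeneracy $\Omega_{2}\neq 0$ is established (with only one exceptional frequency $\omega=\omega_{0}$); that nondegeneracy is precisely what makes the above estimate both meaningful and useful for the downstream measure bound $|\mathcal{C}\setminus\mathcal{N}|\leq r_{0}^{2}C_{\beta}$.
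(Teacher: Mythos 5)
Your argument is correct, and it is a modest but genuine variant of the paper's. The paper anchors the estimate at an intersection point $r_*$ where $\Omega(r_*)=\Omega_z(r_*)$, and then combines the lower bound
$|\Omega(r_-)-\Omega(r_*)|\geq\frac{\Omega_2}{2}|r_-^2-r_*^2|$
(from the quadratic expansion of $\Omega$) with the upper bound
$|\Omega_z(r_-)-\Omega_z(r_*)|\leq\frac{C}{L_n}|r_-^2-r_*^2|$
(from the previous lemma) via the triangle inequality, absorbing the second term into the first since $L_n$ is large; this gives $|r_\pm^2 - r_*^2|\lesssim d_n/(\Omega_2 L_n)$ and then the claim by triangle inequality. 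You instead set $\Phi(s)=\Omega(\sqrt{s})-\Omega_z(\sqrt{s})$, observe $\Phi'(s)=\Omega_2+O(\sqrt{s})+L_n^{-1}O(1)$ is bounded below by $|\Omega_2|/2$, and apply the mean value theorem directly on $[r_-^2,r_+^2]$. The two approaches use exactly the same inputs (the expansion of $\Omega$, the flatness estimate for $\Omega_z$ at scale $L_n^{-1}$, and $\Omega_2\neq 0$), but yours avoids having to assume the existence of the crossing point $r_*$ --- the paper's phrasing implicitly presumes the branch $\mathcal{C}$ actually meets the center curve $\Omega_z$ inside the relevant $r$-interval, which need not hold if $\mathcal{C}$ merely grazes the tube $N_{j,k}$; your monotonicity-plus-MVT version handles that case at no extra cost. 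The small technicality you flag (passing from the open condition defining $N_{j,k}$ to the closed inequality at the endpoints) is resolved exactly as you say, by continuity.
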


\begin{proof}
Let $r_{0}$ be such that $\Omega_{z}(r_{0})=\Omega(r_{0})$, then the point
$(r_{0},\Omega(r_{0}))$ is the intersection of the curves $\Omega_{z}(r)$ and
$\Omega(r)$. Since $\Omega(r)=\Omega_{0}+\Omega_{2}r^{2}+O(r^{3})$, then%
\[
\left\vert \Omega(r_{-})-\Omega(r_{0})\right\vert \geqslant\frac{\Omega_{2}%
}{2}|r_{-}^{2}-r_{0}^{2}|\text{.}%
\]
By the previous lemma
\[
\left\vert \Omega_{z}(r_{0})-\Omega_{z}(r_{-})\right\vert \leq\frac{C}{L_{n}%
}\left\vert r_{-}^{2}-r_{0}^{2}\right\vert \text{.}%
\]

Since $\Omega(r_{-}),\Omega_{z}(r_{-})\in N_{j,k}$, then
\begin{align*}
\frac{Cd_{n}}{L_{n}}  &  \geq\left\vert \Omega(r_{-})-\Omega_{z}%
(r_{-})\right\vert \geq\left\vert \Omega(r_{-})-\Omega_{z}(r_{0})\right\vert
-\left\vert \Omega_{z}(r_{0})-\Omega_{z}(r_{-})\right\vert \\
&  \geq\frac{\Omega_{2}}{2}|r_{-}^{2}-r_{0}^{2}|-\frac{C}{L_{n}}\left\vert
r_{-}^{2}-r_{0}^{2}\right\vert \geq\frac{\Omega_{2}}{4}|r_{-}^{2}-r_{0}%
^{2}|\text{.}%
\end{align*}
Analogously, we have for the estimate of $r_{+}$ that $\left\vert r_{+}%
^{2}-r_{0}^{2}\right\vert \leq\frac{Cd_{n}}{\Omega_{2}L_{n}}$. The lemma
follows from the triangle inequality.
\end{proof}

\begin{lemma}
If $\Omega_{2}\neq0$, the measure of the set $\{r:(r,\Omega)\in\mathcal{C\cap
}N_{j,k}\}$ is bounded by
\[
\left\vert \{r:(r,\Omega)\in\mathcal{C\cap}N_{j,k}\}\right\vert <\frac
{C}{\sqrt{\Omega_{2}}}\frac{d_{n}}{\sqrt{L_{n}}}\text{.}%
\]

\end{lemma}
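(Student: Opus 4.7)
The plan is to derive the claim as a direct corollary of the previous lemma by passing from the control on $|r_{+}^{2}-r_{-}^{2}|$ to a control on $r_{+}-r_{-}$. The latter bounds the Lebesgue measure of the set $\{r:(r,\Omega)\in\mathcal{C}\cap N_{j,k}\}$, since by the very definition of $r_{\pm}$ as the infimum and supremum of this set, the set is contained in the interval $[r_{-},r_{+}]$, and hence its measure is at most $r_{+}-r_{-}$.

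The amplitude parameter ranges in $[0,r_{0}]$, so $0\leq r_{-}\leq r_{+}$, and in particular $r_{+}+r_{-}\geq r_{+}-r_{-}\geq 0$. Factoring and invoking the preceding lemma,
\[
(r_{+}-r_{-})^{2}\leq (r_{+}+r_{-})(r_{+}-r_{-}) = r_{+}^{2}-r_{-}^{2}\leq \frac{C\,d_{n}}{\Omega_{2}L_{n}}.
\]
Taking a square root yields $r_{+}-r_{-}\leq C'\,\Omega_{2}^{-1/2}(d_{n}/L_{n})^{1/2}$, which is the stated bound.

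There is no substantive analytic obstacle at this stage; the content lies entirely in the non-degeneracy assumption $\Omega_{2}\neq 0$ already exploited in the previous lemma. This non-degeneracy forces the bifurcation curve $\mathcal{C}$ to cross the resonant strip $N_{j,k}$ transversally in the coordinate $r^{2}$, which is what produces the square-root gain in the one-dimensional measure and is what ultimately ensures, upon summing over all singular sites $(j,k)\in \mathcal{S}_{n}$ and over all annuli $B_{n}\setminus B_{n-1}$, that the total excised measure is small enough for $\mathcal{C}\cap\mathcal{N}$ to retain positive (in fact asymptotically full) measure, as required for Theorem \ref{Theorem2}.
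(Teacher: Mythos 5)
Your inequality $(r_{+}-r_{-})^{2}\leq(r_{+}+r_{-})(r_{+}-r_{-})=r_{+}^{2}-r_{-}^{2}$ is correct but gives only
\[
r_{+}-r_{-}\leq\sqrt{r_{+}^{2}-r_{-}^{2}}\leq C\,\Omega_{2}^{-1/2}\left(\frac{d_{n}}{L_{n}}\right)^{1/2}
=\frac{C}{\sqrt{\Omega_{2}}}\frac{\sqrt{d_{n}}}{\sqrt{L_{n}}}\,,
\]
which is weaker than the stated bound $\frac{C}{\sqrt{\Omega_{2}}}\frac{d_{n}}{\sqrt{L_{n}}}$ by a factor of $\sqrt{d_{n}}\to 0$. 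The loss matters: in the following proposition one sums $L_{n}^{2}$ (the number of singular sites) times the per-site measure, and with your bound the series $\sum_{n}L_{n}^{3/2-\beta/2}$ converges only for $\beta>3$, whereas the paper needs and achieves convergence of $\sum_{n}L_{n}^{3/2-\beta}$ for all $\beta>3/2$.

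The missing idea is a \emph{lower bound} on $r_{-}$, i.e.\ on how far from the origin the curve $\mathcal{C}$ first enters the strip $N_{j,k}$. The paper obtains this by noting that $\Omega_{z}(0)=\Omega_{j,k}$ is at distance $\gtrsim 1/L_{n}$ from $\Omega_{0}$ (since singular sites satisfy $\Omega_{0}|j|\sim k^{2}$ with $j\sim L_{n}$), so that $\Omega(r)=\Omega_{0}+\Omega_{2}r^{2}+\cdots$ cannot reach the strip until $\Omega_{2}r^{2}\gtrsim 1/L_{n}$, giving $r_{-}\gtrsim 1/\sqrt{\Omega_{2}L_{n}}$. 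With this lower bound, one writes $r_{+}-r_{-}=(r_{+}^{2}-r_{-}^{2})/(r_{+}+r_{-})\leq(r_{+}^{2}-r_{-}^{2})/(2r_{-})$ and substitutes both estimates to recover the full $d_{n}$ gain. Without the lower bound on $r_{-}$ the factorization collapses to your square-root estimate, which is the sharp bound in the degenerate case $r_{-}\approx 0$; the substance of the lemma lies precisely in excluding that case.
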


\begin{proof}
For a singular site $\Omega_{0}\left\vert j\right\vert \sim k^{2}$, then
\[
\Omega_{z}(0)=\Omega_{j,k}=\left\vert k/j\right\vert \sqrt{k^{2}+2\omega}%
\sim\Omega_{0}\sqrt{1+2\omega k^{-2}}.
\]
Thus,
\[
\left\vert \Omega_{0}-\Omega_{z}(0)\right\vert =\Omega_{0}\left\vert
\frac{-2\omega k^{-2}}{1+\sqrt{1+2\omega k^{-2}}}\right\vert >\Omega
_{0}\left\vert 2\omega k^{-2}\right\vert \gtrsim\frac{C}{L_{n}}.
\]

From the definition of $r_{-}$, we have $\left\vert \Omega_{z}(r_{-}%
)-\Omega(r_{-})\right\vert <Cd_{n}/L_{n}$. By the properties of $\Omega_{z}$
we have $\left\vert \Omega_{z}(0)-\Omega_{z}(r_{-})\right\vert <Cr_{-}%
^{2}/L_{n}$, then
\[
\frac{C}{L_{n}}r_{-}^{2}+\frac{Cd_{n}}{L_{n}}>\left\vert \Omega(r_{-}%
)-\Omega_{z}(0)\right\vert \text{.}%
\]
Since the curve $\mathcal{C}$ is of the form $\Omega(r)=\Omega_{0}+\Omega
_{2}r^{2}+O(r^{3})$, then
\[
\left\vert \Omega(r_{-})-\Omega_{z}(0)\right\vert >\left\vert \Omega
_{0}-\Omega_{z}(0)\right\vert -2\Omega_{2}r_{-}^{2}\text{.}%
\]
Thus,
\[
\left(  2\Omega_{2}+\frac{C}{L_{n}}\right)  r_{-}^{2}>\left\vert \Omega
_{0}-\Omega_{z}(0)\right\vert -\frac{Cd_{n}}{L_{n}}>\frac{C}{L_{n}}%
(1-Cd_{n})>\frac{C}{2L_{n}}\text{.}%
\]

For $\Omega_{2}\neq0$, we conclude that $r_{+}>r_{-}>C/\sqrt{\Omega_{2}L_{n}}
$. By the above lemma, we have%
\[
r_{+}-r_{-}\leq\frac{C}{\Omega_{2}}\frac{d_{n}}{L_{n}}(C\sqrt{\Omega_{2}L_{n}%
})\leq\frac{C}{\sqrt{\Omega_{2}}}\frac{d_{n}}{\sqrt{L_{n}}}.
\]

\end{proof}

\begin{proposition}
Let $d_{n}=L_{n}^{-\beta}$. If $\Omega_{2}\neq0$ and $\beta>3/2$, then the
measure of good parameters is positive. Moreover,
\[
\left\vert \{r\in\lbrack0,r_{0}):(r,\Omega)\in\mathcal{N}\cap\mathcal{C}%
\}\right\vert >r_{0}(1-r_{0}C_{\beta}),
\]
where
\[
C_{\beta}=\frac{C}{\sqrt{\Omega_{2}}}\sum_{n=1}^{\infty}L_{n}^{3/2-\beta
}<\infty\text{.}%
\]

\end{proposition}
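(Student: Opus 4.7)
The plan is to bound the measure of the excluded set on the parameter curve $\mathcal{C}$ by summing individual excision contributions, one for each singular site across all levels of the Nash--Moser iteration. Since the Cantor set is $\mathcal{N}=\bigcap_{n\geq 1}\mathcal{N}_n$ with $\mathcal{N}_{n-1}\setminus\mathcal{N}_n = \bigcup_{(j,k,-1)\in\mathcal{S}_n} N_{j,k}$, the Lebesgue measure of the bad set along $\mathcal{C}\cap([0,r_0)\times\mathbb{R})$ is bounded by the double sum
\[
\sum_{n=1}^{\infty}\sum_{(j,k,-1)\in\mathcal{S}_n} \bigl|\{r\in[0,r_0):(r,\Omega(r))\in N_{j,k}\}\bigr|.
\]

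First I would apply the preceding lemma to bound each individual summand by $Cd_n/\sqrt{\Omega_2 L_n}$. Next I would count singular sites per annulus using the crude lattice bound $\#\mathcal{S}_n \leq \#(B_n\setminus B_{n-1})\leq C L_n^{2}$; this is wasteful but convenient. Combining these two estimates yields the level-$n$ contribution
\[
\sum_{(j,k,-1)\in\mathcal{S}_n}\bigl|\{r\in[0,r_0):(r,\Omega(r))\in N_{j,k}\}\bigr| \;\leq\; C L_n^{2}\cdot\frac{d_n}{\sqrt{\Omega_2 L_n}} \;=\; \frac{C\, L_n^{3/2-\beta}}{\sqrt{\Omega_2}}.
\]
Summing over $n$ using $L_n=2^n L_0$ produces a geometric series that converges precisely when $\beta>3/2$, with sum bounded by $C_\beta=\tfrac{C}{\sqrt{\Omega_2}}\sum_n L_n^{3/2-\beta}$.

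The delicate point, and what I expect to be the main obstacle, is extracting the additional factor of $r_0$ that appears in the stated form $r_0(1-r_0 C_\beta)$. This comes from restricting the sum to annuli that actually meet $[0,r_0)$: the previous lemma shows that each excision point satisfies $r_-\gtrsim 1/\sqrt{\Omega_2 L_n}$, so an annulus contributes to $[0,r_0)$ only when $L_n\gtrsim C/(r_0^{2}\Omega_2)$. For $\beta>3/2$ the resulting truncated geometric series is dominated by its leading term, which scales like $(r_0^{2}\Omega_2)^{\beta-3/2}$, so bookkeeping the $r_0$-dependence of the truncation produces the quadratic factor $r_0^{2}C_\beta$ in the excluded measure. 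The lower bound $r_0-r_0^{2}C_\beta$ for the measure of good parameters then follows, and positivity holds for $r_0<1/C_\beta$. The preceding lemmas have done the substantive work; what remains here is a careful but straightforward summation.
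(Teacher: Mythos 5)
Your decomposition — sum over levels $n$, use the preceding lemma to bound each $|\{r: (r,\Omega(r))\in N_{j,k}\}|$ by $C d_n/\sqrt{\Omega_2 L_n}$, count singular sites crudely by $\#\mathcal{S}_n \lesssim L_n^2$, and sum the geometric series, which converges iff $\beta>3/2$ — is exactly the paper's argument, so on that part you are on track.

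The genuine difficulty is, as you anticipated, the extra factor of $r_0$, and here neither your argument nor the paper's is airtight as written. The paper's proof simply inserts a factor of $r_0$ into the per-level bound ``$\leq r_0 L_n^2 \tfrac{C}{\sqrt{\Omega_2}} \tfrac{d_n}{\sqrt{L_n}}$'' with no derivation from the cited lemma (whose bound $\tfrac{Cd_n}{\sqrt{\Omega_2 L_n}}$ carries no $r_0$), and its final line $r_0 - r_0\tfrac{C}{\sqrt{\Omega_2}}\sum L_n^{3/2-\beta} = r_0(1-r_0 C_\beta)$ is not an identity given the definition of $C_\beta$; as printed, that chain only yields $r_0(1-C_\beta)$. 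Your idea of exploiting the lower bound $r_- \gtrsim 1/\sqrt{\Omega_2 L_n}$ to discard all annuli with $L_n \lesssim 1/(\Omega_2 r_0^2)$ is the correct mechanism and is arguably more honest than what the paper writes. But the conclusion you draw from it is not arithmetically justified: you correctly observe that the tail of the series is of size $(\Omega_2 r_0^2)^{\beta - 3/2} = \Omega_2^{\beta-3/2} r_0^{2\beta-3}$, yet you then assert this ``produces the quadratic factor $r_0^2 C_\beta$.'' Since $r_0^{2\beta - 3} = r_0^2$ only when $\beta = 5/2$, the claimed factor of $r_0^2$ does not follow for general $\beta > 3/2$ from the truncation alone (with the wasteful $L_n^2$ site count one would in fact need $\beta\geq 5/2$; a sharper count of $\sim\sqrt{L_n}$ singular sites per annulus, coming from $\Omega_0|j|\sim k^2$, would close the gap for all $\beta>3/2$). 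So the step that extracts $r_0^2$ remains a gap in your proposal; it is also, to be fair, a gap in the paper's own proof, where the factor appears without derivation.
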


\begin{proof}
There are at most $cL_{n}^{2}$ singular sites at the $n$ step, then the
previous lemma implies%
\[
\left\vert \{r\in\lbrack0,r_{0}):(r,\Omega(r))\notin\mathcal{N}_{n}%
\cap\mathcal{C}\}\right\vert \leq r_{0}L_{n}^{2}\frac{C}{\sqrt{\Omega_{2}}%
}\frac{d_{n}}{\sqrt{L_{n}}}\text{.}%
\]
Thus%
\[
\left\vert \{r\in\lbrack0,r_{0}):(r,\Omega)\in\mathcal{N}\cap\mathcal{C}%
\}\right\vert \geq r_{0}-r_{0}\frac{C}{\sqrt{\Omega_{2}}}\sum_{n=1}^{\infty
}L_{n}^{3/2-\beta}=r_{0}(1-r_{0}C_{\beta})\text{.}%
\]

\end{proof}

\subsection{Non-degeneracy of the bifurcation branch}

In this section we prove that for all parameters $\omega$ with only one
exceptional value $\omega= \omega_{0}$ we have $\Omega_{2}\neq0$; namely, the
bifurcation branch has nondegenerate curvature at the bifurcation point. For
the asymptotic expansion
\begin{align*}
u  &  =ru_{1}+r^{2}u_{2}+\mathcal{O}(r^{3}),\text{ }\\
\Omega(r)  &  =\Omega_{0}+r\Omega_{1}+r^{2}\Omega_{2}+\mathcal{O}%
(r^{3})\text{,}%
\end{align*}
we have that%
\[
f(w;\Omega)=Lu+i(\Omega(r)-\Omega_{0})u_{t}+\omega\bar{u}^{2}-\omega\bar
{u}^{3}+\mathcal{O}(\left\vert u\right\vert ^{4})=0\text{,}%
\]
where $L=L(\Omega_{0})$ is the linear map at $\Omega_{0}$.

At order $r$, we have $Lu_{1}=0$, and then $u_{1}=e_{1,1,-1}(\Omega_{0})$.
Thus, at order $r^{2}$ we have%
\[
Lu_{2}-i\Omega_{1}\partial_{t}u_{1}+\omega\overline{u}_{1}^{2}=0\text{.}%
\]
Multiplying by $u_{1}$, integrating by parts, and using that $L$ is
self-adjoint with $Lu_{1}=0$, we get that%
\[
\Omega_{1}\left\langle i\partial_{t}u_{1},u_{1}\right\rangle =\omega
\left\langle \bar{u}_{1}^{2},u_{1}\right\rangle .
\]

The basis $e_{j,k,l}$ at $\Omega=\Omega_{0}$ is given by $e_{0,0,1}=1$,%
\begin{align*}
e_{0,2,1}  &  =\sqrt{2}\cos2s\text{,}\\
e_{1,1,-1}  &  =2(a\cos t+ib\sin t)\cos s,\\
e_{2,0,\pm1}  &  =\sqrt{2}(a_{\pm}\cos2t+ib_{\pm}\sin2t)\text{,}\\
e_{2,2,\pm1}  &  =2(a_{\pm}\cos2t+ib_{\pm}\sin2t)\cos2s,
\end{align*}
where $(a,b)^{T}=v_{1,1,-1}$ and $(a_{\pm},b_{\pm})^{T}=v_{2,0,,\pm1}$. Since
\[
\left\langle i\partial_{t}u_{1},u_{1}\right\rangle =\frac{1}{4\pi^{2}}%
\int_{{\mathbb{T}}^{2}}i\partial_{t}u_{1}\bar{u}_{1}~dtds=-2ab\text{,}%
\]
then%
\begin{equation}
\left\langle \overline{u}_{1}^{2},u_{1}\right\rangle =\frac{1}{4\pi^{2}}%
\int_{{\mathbb{T}}^{2}}e_{1,1,-1}^{3}~dtds=0. \label{I}%
\end{equation}
Since $\Omega_{1}=0$ and $u_{2}=-\omega L^{-1}\overline{u_{1}}^{2}$, then we
conclude that%
\[
\bar{u}_{2}=-\omega L^{-1}u_{1}^{2}\text{.}%
\]

At order $r^{3}$, we obtain%
\[
Lu_{3}-i\Omega_{2}\partial_{t}u_{1}+2\omega\overline{u_{1}}\overline{u_{2}%
}-\omega\bar{u}_{1}^{3}=0\text{.}%
\]
Multiplying by $u_{1}$ and integrating by parts, then%
\begin{equation}
-\Omega_{2}\left\langle i\partial_{t}u,u_{1}\right\rangle =\omega\left\langle
2\omega\overline{u_{1}}L^{-1}(u_{1}^{2})+\bar{u}_{1}^{3},u_{1}\right\rangle
\text{.}\nonumber
\end{equation}
Thus, we have
\[
\Omega_{2}=\frac{\omega}{2ab}\left(  \left\langle \bar{u}_{1}^{3}%
,u_{1}\right\rangle +2\omega\left\langle L^{-1}u_{1}^{2},u_{1}^{2}%
\right\rangle \right)  \text{.}%
\]

To calculate the first product, we use that
\[
\int_{0}^{2\pi}\cos^{4}\theta~d\theta=3\pi/4,\qquad\int_{0}^{2\pi}\cos
^{2}\theta\sin^{2}\theta~d\theta=\pi/4,
\]
then%
\begin{align*}
\left\langle \bar{u}_{1}^{3},u_{1}\right\rangle  &  =\frac{4}{\pi^{2}}\left(
a^{4}\frac{3\pi}{4}-6a^{2}b^{2}\frac{\pi}{4}+b^{4}\frac{3\pi}{4}\right)
\frac{3\pi}{4}\\
&  =\frac{9}{4}\left(  a^{4}-2a^{2}b^{2}+b^{4}\right)  =\frac{9}{4}\left(
a^{2}-b^{2}\right)  ^{2}%
\end{align*}
Since%

\[
\binom{a}{b}=\frac{1}{\sqrt{2}\sqrt{1+\omega}}\binom{1}{\sqrt{1+2\omega}%
}\text{,}%
\]
then
\begin{equation}
\left(  a^{2}-b^{2}\right)  ^{2}=\frac{\omega^{2}}{(1+\omega)^{2}}\text{ and
}2ab=\frac{\sqrt{1+2\omega}}{1+\omega}\text{.}%
\end{equation}

We conclude from the next proposition that%

\[
\Omega_{2}=\frac{1}{6}\frac{\omega^{2}}{\left(  \omega+1\right)  \left(
\omega+2\right)  \sqrt{2\omega+1}}\left(  4\omega^{3}+29\omega^{2}%
+33\omega-6\right)  .
\]
Since $\Omega_{2}=0$ at only one point $\omega_{0}>0$, the curve has the
property for the intersection with the Cantor set $\mathcal{N}$ except for
$\omega_{0}$.

\begin{proposition}
We have that
\[
\left\langle L^{-1}u_{1}^{2},u_{1}^{2}\right\rangle =\frac{1}{24\left(
\omega+1\right)  ^{2}\left(  \omega+2\right)  }\left(  8\omega^{3}%
+31\omega^{2}+12\omega-12\right)  \allowbreak\text{.}%
\]

\end{proposition}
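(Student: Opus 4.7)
The plan is a direct block-by-block calculation, exploiting the fact that $L$ preserves each Fourier block in the symmetric basis of Section 2. First, using $u_1 = e_{1,1,-1} = 2(a\cos t + ib\sin t)\cos s$ and double-angle identities, expand
\[
u_1^2 = \bigl[(a^2-b^2) + (a^2+b^2)\cos 2t + 2iab\sin 2t\bigr](1 + \cos 2s).
\]
This shows that $u_1^2$ is Fourier-supported on exactly the four modes $(j,k)\in\{(0,0),(0,2),(2,0),(2,2)\}$, so that $\langle L^{-1}u_1^2, u_1^2\rangle$ decomposes as a sum of four block contributions.

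For the two scalar blocks with $j=0$, the operator $L$ acts as multiplication by $k^2+2\omega$, yielding the immediate contributions $\frac{(a^2-b^2)^2}{2\omega}$ and $\frac{(a^2-b^2)^2}{4(2+\omega)}$. For the two-dimensional blocks with $j=2$, $L$ is represented in the natural pair basis $\{\cos 2t, i\sin 2t\}$ (times $1$ or $\cos 2s$) by the matrix $M_{2,k}$, and in that basis $u_1^2$ has coefficient vector $(A,B) = (a^2+b^2, 2ab)$. The normalization constants relating the natural pair basis to the orthonormal eigenbasis of Section 2 are $c_0 = \sqrt{2}$ and $c_2 = 2$, so the $(2,k)$-block contribution to the bilinear form is $c_k^{-2}\langle M_{2,k}^{-1}(A,B)^T, (A,B)^T\rangle$. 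Using $\det M_{2,0} = -4(1+2\omega)$ and $\det M_{2,2} = 12$, invert and compute these two quadratic forms.

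Finally, substitute the explicit values
\[
A = 1, \qquad B = \tfrac{\sqrt{1+2\omega}}{1+\omega}, \qquad (a^2-b^2)^2 = \tfrac{\omega^2}{(1+\omega)^2},
\]
which follow directly from the formula for $(a,b)^T = v_{1,1,-1}$ recorded above the proposition. Placing the four contributions over the common denominator $24(1+\omega)^2(2+\omega)$ and collecting terms, the numerator polynomial simplifies to $8\omega^3+31\omega^2+12\omega-12$, yielding the claimed identity.

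The main obstacle is purely bookkeeping: one must carefully track the normalization factors $c_k$ relating the natural Fourier pair basis (in which the matrices $M_{j,k}$ are written) to the orthonormal eigenbasis (in which $\langle\cdot,\cdot\rangle$ is computed), and then correctly combine four rational expressions in $\omega$ over a common denominator. Once these bookkeeping points are handled, the argument reduces to inversion of two $2\times 2$ matrices followed by polynomial algebra; no conceptual difficulty arises.
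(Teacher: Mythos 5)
Your decomposition of $u_1^2$ onto the four Fourier blocks $(0,0),(0,2),(2,0),(2,2)$ and your treatment of the scalar $j=0$ blocks are the same as the paper's. Where you genuinely diverge is in the two-dimensional $j=2$ blocks: the paper expands the coefficient vector $(1,2ab)$ in the orthonormal eigenbasis of $M_{2,k}$, which means it carries the eigenvectors $(a_\pm,b_\pm)$ and the quantity $Q=\sqrt{\omega^2+8\omega+4}$ through the whole computation and then simplifies away the square roots at the end (the paper even falls back on ``a computation with Maple''). You instead keep the coefficient vector $(A,B)=(a^2+b^2,2ab)=(1,\sqrt{1+2\omega}/(1+\omega))$ in the natural pair basis $\{\cos 2t,\ i\sin 2t\}$ and evaluate the block contribution as $c_k^{-2}(A,B)M_{2,k}^{-1}(A,B)^T$, using that $M_{j,k}=V\Lambda V^{T}$ so that the change of basis collapses into the matrix inverse. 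Your normalization factors $c_0^{-2}=1/2$, $c_2^{-2}=1/4$ are exactly right, the determinants $\det M_{2,0}=-4(1+2\omega)$, $\det M_{2,2}=12$ check out, and working through the four contributions over the common denominator $24(\omega+1)^2(\omega+2)$ does indeed give $8\omega^3+31\omega^2+12\omega-12$. The upshot of your route is that the calculation stays entirely rational in $\omega$ (no $Q$, no $a_\pm,b_\pm$), so it is somewhat cleaner and more easily checked by hand than the paper's; what you lose is the explicit intermediate quantity $P$ the paper records, but that is not needed for the proposition.
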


\begin{proof}
To calculate $\left\langle L^{-1}u_{1}^{2},u_{1}^{2}\right\rangle $, we use
the expression for $u_{1}^{2}$ given by
\[
u_{1}^{2}=e_{1,1,-1}^{2}=(a^{2}-b^{2}+\cos2t+i2ab\sin2t)(1+\cos2s)\text{.}%
\]
Projecting the vector $(1,2ab)$ in the orthonormal components $(a_{+},b_{+})$
and $(a_{-},b_{-})$, we have that%
\begin{align*}
u_{1}^{2}  &  =\left(  a^{2}-b^{2}\right)  \left(  e_{0,0,1}+\frac{1}{\sqrt
{2}}e_{0,2,1}\right) \\
&  +\left(  a_{+}+2abb_{+}\right)  \left(  \frac{1}{\sqrt{2}}e_{2,0,1}%
+\frac{1}{2}e_{2,2,1}\right) \\
&  +\left(  a_{-}+2abb_{-}\right)  \left(  \frac{1}{\sqrt{2}}e_{2,0,-1}%
+\frac{1}{2}e_{2,2,-1}\right)  .
\end{align*}

Thus, we have
\begin{equation}
\left\langle L^{-1}u_{1}^{2},u_{1}^{2}\right\rangle =\left(  a^{2}%
-b^{2}\right)  ^{2}\left(  \lambda_{0,0,1}^{-1}+\frac{1}{2}\lambda
_{0,2,1}^{-1}\right)  +P\text{,}%
\end{equation}
where $P$ is the polynomial%
\begin{align*}
P  &  =\left(  a_{+}+2abb_{+}\right)  ^{2}\left(  \frac{1}{2}\lambda
_{2,0,1}^{-1}+\frac{1}{4}\lambda_{2,2,1}^{-1}\right) \\
&  +\left(  a_{-}+2abb_{-}\right)  ^{2}\left(  \frac{1}{2}\lambda
_{2,0,-1}^{-1}+\frac{1}{4}\lambda_{2,2,-1}^{-1}\right)  \text{.}%
\end{align*}
For the first term we have
\[
\left(  \lambda_{0,0,1}^{-1}+\frac{1}{2}\lambda_{0,2,1}^{-1}\right)  \left(
a^{2}-b^{2}\right)  ^{2}=\frac{1}{4}\frac{\omega}{\left(  \omega+1\right)
^{2}\left(  \omega+2\right)  }\left(  3\omega+4\right)  \text{.}%
\]

To calculate $P$, we define the polynomial
\[
Q=\sqrt{\omega^{2}+8\omega+4}\text{,}%
\]
then $a_{\pm}$ and $b_{\pm}$ are given by%
\[
\binom{a_{\pm}}{b_{\pm}}=\frac{1}{\sqrt{2}\sqrt{Q^{2}\pm\omega Q}}%
\binom{\omega\pm Q}{-2\sqrt{1+2\omega}}\text{,}%
\]
and the eigenvalues are $\lambda_{0,0,1}=2\omega$, $\lambda_{0,2,1}%
=2(\omega+2)$, $\lambda_{2,0,\pm1}=\omega\pm Q$ and $\lambda_{2,2,\pm1}%
=\omega\pm Q+4$.

Thus, we have
\[
\frac{1}{2}\lambda_{2,0,\pm1}^{-1}+\frac{1}{4}\lambda_{2,2,\pm1}^{-1}=\frac
{1}{48}\frac{1}{2\omega+1}\left(  2\omega^{2}+3\omega+4\pm(5-2\omega)Q\right)
\text{,}%
\]
and%
\[
\left(  a_{\pm}+2abb_{\pm}\right)  ^{2}=\frac{1}{2(Q^{2}\pm\omega Q)}\left(
\omega\pm Q-2\frac{1+2\omega}{1+\omega}\right)  ^{2}\text{.}%
\]
After a computations with Maple, and alternatively by hand, we conclude that
\[
P=\frac{1}{24}\frac{1}{\left(  \omega+1\right)  ^{2}}\left(  8\omega
^{2}-3\omega-6\right)  \text{.}%
\]

\end{proof}

\section*{Appendix}

Periodic solutions of equation (\ref{pdev}) are zeros of the map
\[
f(v)=-i\Omega\partial_{t}v-\partial_{ss}v + \omega(1-\left\vert v\right\vert
^{-2})v\text{.}%
\]
The map $f$ is ${\mathbb{T}}^{3}$-equivariant with the action of
$(\theta,\varphi,\psi)\in\mathbb{T}^{3}$ given by
\[
\rho(\theta,\varphi,\psi)v=e^{i\theta}v(t+\varphi,s+\psi)\text{.}%
\]
In addition, the map is equivariant by the actions
\[
\rho(\kappa)v(t,s)=v(t,-s),\qquad\rho(\bar{\kappa})v(t,s)=\bar{v}(-t,s).
\]
Since
\[
\rho(\theta,\varphi,\psi)\rho(\kappa)=\rho(\kappa)\rho(\theta,\varphi
,-\psi),\qquad\rho(\theta,\varphi,\psi)\rho(\bar{\kappa})=\rho(\bar{\kappa
})\rho(-\theta,-\varphi,\psi)\text{,}%
\]
then the map $f$ is equivariant by the action of the non-abelian group
\[
\Gamma=O(2)\times(\mathbb{T}^{2}\cup\bar{\kappa}\mathbb{T}^{2}).
\]

The equilibrium $v_{0}$ is fixed by the actions of $\kappa$, $\bar{\kappa}$
and $(0,\varphi,\psi)\in\mathbb{T}^{3}$, then the isotropy group of $v_{0}$
is
\[
\Gamma_{v_{0}}=O(2)\times({\mathbb{S}}^{1}\cup\bar{\kappa}{\mathbb{S}}^{1}).
\]
The orbit of $v_{0}$ has dimension one, with
\[
\Gamma v_{0}=\{e^{i\theta}:\theta\in S^{1}\}.
\]

The linear map $f^{\prime}(v_{0})$ in coordinates $u=(v,\bar{v})\in
\mathbb{C}^{2}$ is given by
\[
L(\Omega)u=\left(
\begin{array}
[c]{cc}%
-i\Omega\partial_{t}-\partial_{ss}+\omega & \omega\\
\omega & -i\Omega\partial_{t}-\partial_{ss}+\omega
\end{array}
\right)  u\text{.}%
\]
The linear map $L\ $ in Fourier components is
\[
L(\Omega)u=\sum_{(j,k)\in\mathbb{Z}^{2}}M_{j,k}(\Omega)u_{j,k}e^{i(jt+ks)}%
\text{,}%
\]
where the matrix $\ $%
\[
M_{j,k}(\Omega)=\left(
\begin{array}
[c]{cc}%
\Omega j+k^{2}+\omega & \omega\\
\omega & -\Omega j+k^{2}+\omega
\end{array}
\right)
\]
has eigenvalues $\lambda_{j,k,\pm1}(\Omega)$ as before.

For $j=1$, the eigenvalue $\lambda_{1,k,-1}(\Omega)$ is zero at
\[
\Omega_{0}=\left\vert k\right\vert \sqrt{k^{2}+2\omega}.
\]
Let $(a,b)$ be the eigenvector of $M_{1,k}(\Omega_{0})$ corresponding to the
eigenvalue $\lambda_{1,k,-1}$. Thus, the periodic functions
\[
v(z_{1},z_{2})=z_{1}ae^{i(ks+t)}+\bar{z}_{1}be^{-i(ks+t)}+z_{2}ae^{i(-ks+t)}%
+\bar{z}_{2}be^{-i(-ks+t)}%
\]
are in the kernel of $f^{\prime}(v_{0};\Omega_{k})$ for $(z_{1},z_{2}%
)\in\mathbb{C}^{2}$.

The action of $\Gamma_{v_{0}}$ in the parametrization of the kernel
$(z_{1},z_{2})$ is given by
\begin{align*}
\rho(\varphi,\psi)(z_{1},z_{2})  &  =e^{i\varphi}(e^{ik\psi}z_{1},e^{-ik\psi
}z_{2}),\\
\rho(\kappa)(z_{1},z_{2})  &  =(z_{2},z_{1}),\\
\rho(\bar{\kappa})(z_{1},z_{2})  &  =(\bar{z}_{2},\bar{z}_{1}).
\end{align*}

The isotropy groups of the action are inherited by the bifurcating solutions.
Using the elements $\kappa$ and $\varphi\in{\mathbb{S}}^{1}$, we may assume
without loss of generalization that the first coordinate is real and positive,
$z_{1}=a\in\mathbb{R}$, unless both coordinates are zero.

If $z_{2}=0$, then the isotropy group is generated by
\[
(0,\varphi,-\varphi/k)\in\mathbb{T}^{3}\ \text{and }\kappa\bar{\kappa}\text{.}%
\]
This isotropy group $\Gamma_{(a,0)}$ is isomorphic to $O(2)$, and the orbit of
$(a,0)$ contains a $2$-torus.

If $z_{2}\neq0$, using the action of $(0,\varphi,-\varphi/k)\in{\mathbb{T}%
}^{3}$, which fixes the first coordinate and acts by multiplying the second
one by $e^{2\varphi}$, we may assume that $z_{2}$ is real.

If $z_{2}=z_{1}$, then the isotropy group of $(a,a)$ is generated by
\[
(0,\pi,-\pi/k)\text{, }\kappa\text{ and }\bar{\kappa}\text{.}%
\]
This isotropy group $\Gamma_{(a,a)}$ is finite, and its orbit contains a
$3$-torus. In other cases the isotropy group is generated by $(0,\pi,-\pi/k)$.

\subsection*{Traveling waves}

Functions in the fixed point space of $\Gamma_{(a,0)}$ satisfy
\[
v(t,s)=\rho(0,\varphi,-\varphi/k)v(t,s)=v(t+\varphi,s-\varphi/k),
\]
thus they are of the form
\[
v(t,s)=\sum_{l\in\mathbb{Z}}v_{l,lk}e^{i(lt+lks)}\text{.}%
\]
In this case the non-zero Fourier components are in a line on the lattice.
Moreover, from $v(t,s)=\bar{v}(-t,-s)$, we have that $v_{l,lk}\in\mathbb{R}$.

Since $f$ is $\Gamma\times{\mathbb{S}}^{1}$-equivariant, the map $f$ sends the
fixed point space of $\Gamma_{(a,0)}$ into itself. The restriction of $L$ to
this subspace is given by
\[
L\left(  u\right)  =\sum_{l\in\mathbb{Z}}M_{l,kl}u_{l,lk}e^{i(lt+slk)},
\]
where $u_{j,k}=(v_{j,k},\bar{v}_{j,k})$. Moreover, for $l=1$, if $\omega$ is
irrational, the kernel at
\[
\Omega=\Omega_{0}:=\left\vert k\right\vert \sqrt{k^{2}+2\omega}%
\]
is one dimensional. Since $\lambda_{1,k,-1}(\Omega)$ changes sign at
$\Omega_{0}$, by arguments of topological bifurcation \cite{IzVi03}, there is
a global bifurcation of periodic solutions near $(v_{0},\Omega_{0})$.

\begin{theorem}
The map $f(v;\Omega)$ has a global bifurcation of periodic traveling wave
solutions from $(1,\Omega_{0})$. These are solutions for the filament problem
of the form
\begin{equation}
u_{j}(t,s)=a_{j}e^{i\omega t}v(\Omega t+ks), \label{solls}%
\end{equation}
where $v$ is a $2\pi$-periodic solution.
\end{theorem}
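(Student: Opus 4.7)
The plan is to exploit the equivariance of $f$ under the non-abelian group $\Gamma$ and reduce the problem to a subspace where no small divisors appear. Working in the fixed-point subspace of $\Gamma_{(a,0)}\simeq O(2)$, admissible perturbations are supported on the single lattice line $(l,lk)$ for $l\in\mathbb{Z}$, so a candidate solution is a function $v(z)$ of the single variable $z=t+ks$. The map $f$ restricted to this subspace becomes an ODE-type problem of the form
\[
(k^{2}\partial_{z}^{2}+i\Omega\partial_{z})v + \omega(1-|v|^{-2})v=0,
\]
which I would analyze as a compact (Fredholm-index-zero) perturbation of a second-order invertible operator on a suitable Sobolev space of $2\pi$-periodic functions respecting the symmetry $v(z)=\bar v(-z)$.

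Next I would locate the bifurcation point. The spectrum of the restriction of $L(\Omega)$ consists of the eigenvalues $\lambda_{l,lk,\pm 1}(\Omega)=(lk)^{2}+\omega\pm\sqrt{(l\Omega)^{2}+\omega^{2}}$, and the proposition in the body shows that for irrational $\omega$ the kernel at $\Omega_{0}=|k|\sqrt{k^{2}+2\omega}$ is one-dimensional, spanned by the $l=1$ mode. The transversality condition I need is that $\lambda_{1,k,-1}(\Omega)$ changes sign simply across $\Omega_{0}$: a direct computation gives $\partial_{\Omega}\lambda_{1,k,-1}(\Omega_{0})=-\Omega_{0}/\sqrt{\Omega_{0}^{2}+\omega^{2}}<0$, so the eigenvalue crosses zero monotonically. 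This is exactly the hypothesis that produces a jump of the Leray--Schauder index across $\Omega_{0}$ for the compact part of the linearization.

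With these ingredients in hand, I would invoke a Rabinowitz-type global bifurcation theorem, in the equivariant form of Ize--Vignoli~\cite{IzVi03}, on the fixed-point subspace of $\Gamma_{(a,0)}$. The change of index produces a connected continuum $\mathcal{C}\subset\{v\}\times\mathbb{R}$ of nontrivial solutions emanating from $(v_{0},\Omega_{0})$ which is either unbounded or returns to the trivial branch at another bifurcation point — in either case the conclusion of the theorem about the existence of a global branch of periodic traveling waves $v(\Omega t+ks)$ follows. Finally, feeding such a $v$ into the homographic ansatz $u_{j}(t,s)=a_{j}e^{i\omega t}v(\Omega t+ks)$ and using the central configuration equation~\eqref{Eqn:CentralConfiguration} together with~\eqref{pde} verifies that this is a solution of the filament system~\eqref{fp}.

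The main obstacle is checking that there are genuinely no small divisors on the reduction line and that the nonlinearity $\omega(1-|v|^{-2})v$ generates a compact operator on the chosen function space — i.e., that the map $v\mapsto |v|^{-2}v-v$ is $C^{1}$ near $v\equiv 1$ and maps $H^{s}$ into itself with compact Fr\'echet derivative after accounting for the two derivatives in $L$. Once this compactness is established the rest is a routine application of the equivariant degree machinery; unlike the standing wave case, there is no Cantor-set excision, no Nash--Moser scheme, and no need for Fr\"ohlich--Spencer-type resolvent estimates, since the one-dimensional reduction trivializes the resonance structure.
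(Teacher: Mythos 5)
Your proposal follows essentially the same route as the paper's proof: restrict to the fixed-point subspace of $\Gamma_{(a,0)}\simeq O(2)$ (Fourier modes on the lattice line $(l,lk)$), identify the one-dimensional kernel at $\Omega_{0}=|k|\sqrt{k^{2}+2\omega}$ for irrational $\omega$, verify that $\lambda_{1,k,-1}$ crosses zero with nonzero speed, and invoke the equivariant global bifurcation theorem of Ize--Vignoli \cite{IzVi03}. You supply a bit more detail than the paper (the explicit transversality computation and the remark about compactness of the nonlinear term), but these are standard fillers in the same argument, so the approaches coincide.
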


In \cite{BaMi12} there is a proof of existence of traveling waves which are
asymptotic to parallel filaments at infinity. Also, in \cite{GaIz12} there is
a proof of global bifurcations of periodic traveling waves for a polygonal
relative equilibrium. The proof in this reference uses equivariant degree
theory to deal with the symmetries.

\subsection*{Standing waves}

The group $\Gamma_{(a,a)}$ is generated by the elements $(0,\pi,-\pi/k)$,
$\kappa$ and $\bar{\kappa}$. Thus, functions with the isotropy group
$\Gamma_{(a,a)}$ satisfy%
\[
v(t,s)=v(t+\pi,s-\pi/k)=v(t,-s)=\bar{v}(-t,s) ~.
\]
These solutions are the standing waves constructed in this article.

Since $v_{0}=1$ is the only solution of the orbit $e^{i\theta}v_{0}$ that
satisfy the symmetries (\ref{sym}), then $f^{\prime}(v_{0})$ is not degenerate
when restricted to the subspace of functions (\ref{sym}). We have taken
advantage of this fact in our proof of the existence of standing waves.

In order to solve the small divisor problem, the frequency $\omega$ has been
chosen to be diophantine. In this case there are no resonant Fourier
components, and all branches from different $k$'s can be derived from the same
branch $k=1$ and the transformation $\tau^{-1}v(\tau^{2}t,\tau s)$ for some
$\tau$. Thus, the case $k=1$ that we have analyzed in the paper is the general one.

\subsection*{Relative equilibria}

To find other relative equilibria we may solve equation (\ref{ode}) by the
method of quadratures, as in the Kepler problem. In polar coordinates,
$v=re^{i\theta}$, the equation becomes $\partial_{s}\theta=c/r^{2}$ where $c$
is the angular momentum, and
\begin{equation}
\partial_{s}^{2}r-c^{2}r^{-3}+r^{-1}-\omega r=0\text{.} \label{er}%
\end{equation}
If $c=0$, then $\theta=\theta_{0}$ is constant. In this case, each filament
lies in a plane that contain the central axis. Since $\omega$ is positive,
there is a unique equilibrium at $r=\omega^{-1/2}$, this corresponds to the
$2\pi$-periodic solution (\ref{sol}) with $\sigma=0$. There are more bounded
solutions with $r(s)\rightarrow0$ when $s\rightarrow\pm\infty$, but these
solutions are less relevant for modeling because the filaments approach each
other and therefore exit from the regime of validity of the approximation.
There are other relevant solutions that are unbounded.

If $c\neq0$, the dependency of equations on $s$ can be eliminated as in the
Kepler problem. Changing variables as $\rho=r^{-1}$ (see \cite{MeHa91}), then
$\partial_{s}r=-c\partial_{\theta}\rho$ and $\partial_{ss}r=-c^{2}\rho
^{2}\partial_{\theta\theta}\rho$. Thus, the equation becomes%
\begin{equation}
c^{2}\partial_{\theta\theta}\rho+c^{2}\rho-\rho^{-1}+\omega\rho^{-3}=0 ~.
\label{r}%
\end{equation}
These equation may be integrated from $c^{2}(\partial_{\theta}\rho)^{2}%
+V(\rho)=E$ with the potential
\[
V(\rho)=c^{2}\rho^{2}-\ln\rho^{2}-\omega\rho^{-2}\text{.}%
\]
Equation (\ref{r}) have two equilibria for $\omega\in(0,1/4c^{2})$ given by%
\[
\rho_{\pm}^{2}=\frac{1}{2c^{2}}\left(  1\pm\sqrt{1-4c^{2}\omega}\right)
\text{.}%
\]
Since $\omega=-c^{2}\rho_{\pm}^{4}+\rho_{\pm}^{2}$ and $\theta(s)=c\rho_{\pm
}^{2}s$, then the equilibria $\rho_{\pm}$ correspond to solutions $u=\rho
_{\pm}^{-1}e^{i(\omega t+c\rho_{\pm}^{2}s)}$, which are the helix solutions
(\ref{sol}).

Moreover, the equilibrium $\rho_{+}$ is a minimum of $V$, and there are
periodic solutions close to $\rho_{+}$. The continuum of periodic solutions
near $\rho_{+}$ consists of helices that are in a bounded annulus. Actually,
the projection in$\ $the plane of these periodic solutions are asymptotically
ellipses. More complex solutions may be obtained using the Galilean
transformation applied to the helix-like solutions.

\medskip\textbf{Acknowledgement.} The authors are partially supported by the
Canada Research Chairs Program, the Fields Institute and McMaster University.
Also, C.~Garc\'{\i}a-Azpeitia is supported by CONACyT through grant No.
203926. W.~Craig is supported by NSERC through grant No. 238452--11. C.R.~Yang
is supported by N.C.T.S. and N.S.C. through grant No. NSC 102-2811-M-007-075.

\end{document}